\newcommand*{\C}{\mathrel{\mathsf{C}}}
\newcommand*{\D}{\mathrel{\mathsf{D}}}
\newcommand*{\CC}{\mathrel{\widehat{\mathsf{C}}}}
\newcommand*{\DC}{\mathrel{\widehat{\mathsf{D}}}}
\numberwithin{equation}{section}
\newtheorem{thm}{Theorem}[section]
\newaliascnt{prp}{thm}
\newtheorem{prp}[prp]{Proposition}
\newaliascnt{cor}{thm}
\newtheorem{cor}[cor]{Corollary}
\newaliascnt{lem}{thm}
\newtheorem{lem}[lem]{Lemma}
\theoremstyle{definition}
\newaliascnt{dfn}{thm}
\newtheorem{dfn}[dfn]{Definition}
\newaliascnt{qst}{thm}
\newaliascnt{xpl}{thm}
\newtheorem{xpl}[xpl]{Example}
\newaliascnt{rmk}{thm}
\newtheorem{rmk}[rmk]{Remark}
\newtheorem*{rmk*}{\textnormal{\emph{Remark}}}
\author{Tristan Bice}
\email{tristan.bice@gmail.com}
\thanks{The first author is supported by IMPAN (Poland)}
\author{Charles Starling}
\email{cstar@math.carleton.ca}
\thanks{The second author is supported by a Carleton University internal research grant.}
\keywords{Stone duality, local compactness, patch topology, \'etale groupoids, inverse semigroups, order structures, filters, cosets}
\subjclass[2010]{03C65, 06E15, 06E75, 06B35, 54D45, 54D70, 54D80}
\title{Hausdorff Tight Groupoids Generalised}
\begin{document}

\begin{abstract}
We extend Exel's ample tight groupoid construction to general locally compact \'etale groupoids in the Hausdorff case.  Moreover, we show how inverse semigroups are represented in this way as `pseudobases' of open bisections, thus yielding a duality which encompasses various extensions of the classic Stone duality.
\end{abstract}

\maketitle

\section*{Introduction}

\subsection*{Motivation}

Exel's original tight groupoid construction in \cite{Exel2008} produces an ample (i.e. totally disconnected \'etale) groupoid from an inverse semigroup.  Since then, this general construction has proved extremely successful in producing groupoid models for a vast array of C*-algebras.  The only caveat here is that C*-algebras coming from ample groupoids inevitably have plenty of projections.  Our ultimate goal is to produce combinatorial groupoid models even for projectionless C*-algebras which means we must first find a non-ample generalisation of the tight groupoid construction.

\begin{rmk*}
This task seems all the more urgent given that projectionless C*-algebras like the Jiang-Su and Jacelon-Razak algebras have come to the fore in other contexts, namely the classification program for C*-algebras.  Whether these C*-algebras have any groupoid models at all was only resolved recently in \cite{DeeleyPutnamStrung2015} and \cite{AustinMitra2018}.  The hope would be that an even better picture of these C*-algebras could be provided by more combinatorial groupoid models obtained from a generalised tight groupoid construction.  One might even hope that important C*-algebra properties (e.g. strong self-absorption) could be detected at the inverse semigroup level.
\end{rmk*}

\subsection*{Outline}

In Exel's original tight groupoid construction, the elements of the inverse semigroup get represented as compact open bisections in the resulting groupoid.  To produce more general (potentially non-ample) groupoids we should of course take the inverse semigroup elements to represent more general (potentially non-compact) open bisections.  However, we still require some information about compactness in order to produce locally compact groupoids.

As in domain theory, the key idea is to obtain this information from a (potentially non-reflexive) relation $\prec$ representing `compact containment'.  In compact Hausdorff spaces, this is the same as `closed containment', so in this case
\[p\prec q\qquad\text{represents}\qquad\overline{p}\subseteq q.\]

\begin{rmk*}
We will define several other abstract relations but, as above, in order to provide some intuition, we will say what each relation informally `represents' for subsets of compact Hausdorff spaces.
\end{rmk*}

At first we put the inverse semigroup structure to one side and focus just on the relation $\prec$.  In \autoref{Preliminaries}, we take $\prec$ to be a binary relation on an arbitrary set $P$, from which we define cover relations $\D$ and $\C$.  We then consider centred subsets and finish with a selection principle extending K\"onig's Lemma in \autoref{SelectionPrinciple}.

In \autoref{Pseudobases}, we move on to the topological part of our construction.  With just the single weak interpolation assumption in \eqref{Shrinking}, we are able to show that the tight spectrum always produces a locally compact space in \autoref{TightLocallyCompact}.  Moreover, this yields a duality between abstract and concrete `pseudobases' of locally compact Hausdorff spaces, as we show in \autoref{abstract->concrete} and \autoref{TopologyRecovery}.

In \autoref{InverseSemigroups}, we extend this to inverse semigroups, showing how the the algebraic structure passes to the topological space.  This yields a duality between pseudobasic inverse semigroups and \'etale pseudobases of locally compact Hausdorff \'etale groupoids, as shown in \autoref{LCHEgroupoid} and \autoref{GroupoidRecovery}.

\subsection*{Background}

We have taken inspiration from a number sources, notably domain theory (see \cite{GierzHofmannKeimelLawsonMisloveScott2003} and \cite{Goubault2013}), set theory (see \cite{Kunen2011}), point-free topology (see \cite{PicadoPultr2012}) and its non-commutative generalisations (see \cite{Resende2007}, \cite{LawsonLenz2013} and \cite{KudryavtsevaLawson2017}).  We also discovered that the earlier parts of our theory were developed independently in some of Exel's unpublished notes, namely \cite{Exel2012} and \cite{Exel2012b} (kindly provided to us by the author after presenting our work at the UFSC Operator Algebra Seminar in August 2018), as we mention at relevant points below.

Probably the best way of putting our construction in context is to view it as a generalisation of various Stone type dualities, as summarised in the diagram below.  Note each duality is connected above to all those dualities it generalises and is expressed in the following form:
\vspace{10pt}
\begin{center}
Author (Year) [Paper]\\
Abstract Structure\\
Concrete (Pseudo)basis\\
Topological Space/Groupoid\\
\end{center}
\vspace{10pt}
E.g. the regular ${}^{c\circ}$-$\overline{\cup}^\circ$-$\cap$-bases below considered by De Vries refer to bases of regular open sets (i.e. $O=\overline{O}^\circ$) that are closed under regular complements (i.e. $O^{c\circ}$), regular pairwise unions (i.e. $\overline{O\cup N}^\circ$) and pairwise intersections (i.e. $O\cap N$).  Also LCLH stands for `locally compact locally Hausdorff' (and for compact or Hausdorff spaces we remove the corresponding L).

\begin{rmk*}
Our construction can also be viewed as a generalization of a classical set theoretic construction of a Boolean algebra from a poset (see \cite[Lemma III.4.8]{Kunen2011} and \cite[\S7]{BiceStarling2016}).  Specifically, given any poset $(P,\leq)$, the regular open sets in the Alexandroff topology form a Boolean algebra and, moreover, $P$ is dense in the subalgebra it generates, which can then be identified with its (totally disconnected) Stone space.  The topological part of our construction extends this from posets $(P,\leq\nolinebreak)$ to abstract pseudobases $(P,\prec)$ where $\prec$ need not be reflexive and the resulting spaces are no longer necessarily totally disconnected.
\end{rmk*}

\null
\vspace{10pt}
\hspace{-25pt}\begin{tikzpicture}[scale=3]
    \node (Stone) at (0,0) [align=center]{Stone (1936) \cite{Stone1936}\\ Boolean Algebras\\ All Clopen Subsets\\ CH Totally Disconnected Spaces};
	  \node (De Vries) at (-1.5,-1) [align=center]{De Vries (1962) \cite{DeVries1962}\\ Compingent Algebras\\ Regular ${}^{c\circ}$-$\overline{\cup}^\circ$-$\cap$-Bases\\ CH Spaces};
		\node (Wallman) at (0,-1) [align=center]{Wallman (1938) \cite{Wallman1938}\\ Normal Lattices\\ $\cup$-$\cap$-Bases\\ CH Spaces};
		\node (Lawson) at (1.5,-1) [align=center]{Lawson (2012) \cite{Lawson2012}\\ Boolean Inverse Semigroups\\ All Compact Open Bisections\\ LCH Ample Groupoids};
		\node (Shirota) at (-1.5,-2) [align=center]{Shirota (1952) \cite{Shirota1952}\\ R-Lattices\\ Regular $\overline{\cup}^\circ$-$\cap$-Bases\\ LCH Spaces};
		\node (BiceStarling) at (1.5,-2) [align=center]{B.-Starling (2018) \cite{BiceStarling2018}\\ Basic Inverse Semigroups\\ \'Etale $\cup$-Bases\\ LCLH \'Etale Groupoids};
		\node (BiceStarling2) at (-.5,-3) [align=center]{[This Paper]\\ Pseudobasic Inverse Semigroups\\ \'Etale Pseudobases\\ LCH \'Etale Groupoids};
		\node (BiceStarling3) at (1.5,-3) [align=center]{[Next Paper]\\ Quasibasic Ordered Groupoids\\ \'Etale Quasibases\\ LCLH \'Etale Groupoids};
		
		\draw (Stone)--(De Vries)--(Shirota)--(BiceStarling2);
		\draw (Shirota)--(BiceStarling3);
		\draw (Stone)--(Wallman)--(BiceStarling)--(BiceStarling3);
		\draw (Wallman)--(BiceStarling2);
		\draw (Lawson)--(BiceStarling2);
		\draw (Stone)--(Lawson)--(BiceStarling);
\end{tikzpicture}
\vspace{10pt}

One key difference we should point out is that, while the other dualities above are first order, this is no longer the case with the (generalised) tight groupoid construction.  This is because we must often work with subsets rather than single elements.  However, for the most part we can work with finite subsets so the theory could be expressed in terms of omitting types (as in \cite[\S5]{BiceStarling2016}, which the present paper generalises), which makes it first order in a weak sense.  In particular, the abstract pseudobases we deal with would still provide a natural way of interpreting locally compact spaces in forcing extensions (Wallman's normal lattices were used for this purpose instead in \cite{Kubis2014}).

In contrast, the frames/locales/quantales appearing in point-free topology and their non-commutative generalizations are undeniably second order, making vital use of infinite joins.  Even the finite joins and the corresponding distributivity required in \cite{Lawson2012} and \cite{BiceStarling2018} restricts the freedom one has in making combinatorial constructions.  Thus, while these various point-free counterparts of \'etale groupoids are attractive from an abstract theory point of view, it is rather the (generalised) tight groupoid construction that is more suitable for creating specific examples.

\begin{rmk*}
A similar phenomenon occurs in set theory, where forcing is usually done with combinatorial posets rather than Boolean algebras, even though they produce the same extensions.
\end{rmk*}

\subsection*{Acknowledgements}

We would like to thank Gilles de Castro and Ruy Exel for some very useful comments and questions (see \autoref{CastroQuestion} and \autoref{ExelQuestion}) after presenting our work at the UFSC operator algebra seminar in August 2018.

\section{Preliminaries}\label{Preliminaries}

\begin{center}
\textbf{Throughout we assume $\prec$ is a binary relation on a set $P$}.
\end{center}

First let us make some general notational conventions.
\begin{dfn}
For any $Q\subseteq P$ we define
\[Q^\prec=\{p\in P:\exists q\in Q\ (q\prec p)\}.\]
We extend $\prec$ to a binary relation on $\mathcal{P}(P)=\{Q:Q\subseteq P\}$ by defining
\[Q\prec R\qquad\Leftrightarrow\qquad Q\subseteq R^\succ\qquad\Leftrightarrow\qquad\forall q\in Q\ \exists r\in R\ (q\prec r).\]
\end{dfn}

Note this convention makes $\prec$ `auxiliary' to $\subseteq$ on $\mathcal{P}(P)$, i.e.
\[Q\subseteq Q'\prec R'\subseteq R\qquad\Rightarrow\qquad Q\prec R.\]
This is also the same convention used in \cite[Definition 4.1]{Exel2012} and \cite[\S3]{Lenz2008} (at least for filters, just in reverse).  Also note above that we are using the standard convention of writing $\succ$ for the opposite relation of $\prec$, i.e. $p\succ q$ means $q\prec p$.  For subsets, however, $R\succ Q$ is not the same as $Q\prec R$ \textendash\, the former means $R\subseteq Q^\prec$ while the latter means $Q\subseteq R^\succ$.

\subsection{Cover Relations}

\begin{dfn}\label{Covers}
We define the relations $\D$ and $\C$ on $\mathcal{P}(P)$ by
\begin{align}
\label{DenseCover}\tag{Dense Cover}Q\D R\qquad&\Leftrightarrow\qquad\forall q\prec Q\ \exists r\prec R\ (r\prec q).\\
\label{CompactCover}\tag{Compact Cover}Q\C R\qquad&\Leftrightarrow\qquad\exists\text{ finite }F\ (Q\D F\prec R).
\end{align}
\end{dfn}

Essentially the same relations appear in \cite[Definition 4.2]{Exel2012}.  Note that we again immediately obtain the following auxiliarity properties.
\begin{align}
\label{Daux}Q\subseteq Q'\D R'\subseteq R\qquad&\Rightarrow\qquad Q\D R.\\
Q\subseteq Q'\C R'\subseteq R\qquad&\Rightarrow\qquad Q\C R.
\end{align}
Also note the dense cover relation $\D$ can be expressed more concisely by
\begin{equation}\label{Ddef2}
Q\D R\qquad\Leftrightarrow\qquad Q^\succ\succ R^\succ,
\end{equation}
which just means $Q^\succ\subseteq R^{\succ\prec}$.  Informally, $Q\D R$ is saying that $R$ is dense in $Q$ while $Q\C R$ is saying that the $Q$ is compactly contained in $R$, i.e.
\begin{align*}
Q\D R\qquad&\text{represents}\qquad\bigcup Q\subseteq\overline{\bigcup R}.\\
Q\C R\qquad&\text{represents}\qquad\overline{\bigcup Q}\subseteq\bigcup R.
\end{align*}
(For more precise statements, see \autoref{RDQ}, \autoref{Crep}, \eqref{QDR} and \eqref{QCR}).

Note here it is important that we consider the elements of $P$ to represent \emph{non-empty} sets, i.e. $P$ will \emph{not} have a minimum $0$ (i.e. satisfying $0^\prec=P$) except in trivial cases.  Indeed, in \autoref{InverseSemigroups} when we consider an inverse semigroup $S$ with $0$, we will take $P=S\setminus\{0\}$.  However, $\mathcal{P}(P)$ certainly has a minimum with respect to $\subseteq$, namely the empty set $\emptyset$, which we immediately see is also a minimum with respect to $\prec$, $\D$ and $\C$ on $\mathcal{P}(P)$, i.e. $\emptyset\prec Q$, $\emptyset\D Q$ and $\emptyset\C Q$, for all $Q\subseteq P$.

\begin{rmk}
Alternatively, one could assume that $P$ always has a minimum $0$ and then modify the definitions of $\D$ and $\C$ accordingly.  However, we would then have to say `non-zero' all the time, e.g. `for all non-zero $p$...' or `there exists non-zero $p$...', which can get somewhat tiresome.  
\end{rmk}

\begin{rmk}
In the classical case when $\prec$ is a preorder $\leq$, $\D$ is essentially the same as the Lenz arrow relation in \cite{Lawson2012}, originally from \cite{Lenz2008}.  Equivalently, $Q\D R$ is saying that $R$ is an `outer cover' of $Q^\geq$ in sense of \cite{ExelPardo2016}.  There is also another more topological description of $\D$, which provides extra motivation for calling it the `dense cover relation' (in fact the `dense' terminology is already used for singleton sets in \cite[Definition 11.10]{Exel2008}).  Specifically, $Q\D R$ means that $Q^\geq\cap R^\geq$ is dense in $Q^\geq$ in the Alexandroff topology, where the open sets are precisely the lower sets.  Also, if $R\leq Q$ then, in set theoretic terminology, $Q\D R$ means `$R$ is predense in $Q^\geq$' \textendash\, see \cite[Definition III.3.58]{Kunen2011}.
\end{rmk}

\begin{prp}\label{DCproperties}
If $\prec$ is transitive on $P$, $\D$ and $\C$ are transitive on $\mathcal{P}(P)$ and
\begin{align}
\label{DC=>C}Q\D S\C R\qquad\Rightarrow\qquad Q\C R\qquad&\Rightarrow\qquad Q\D R\qquad\Leftrightarrow\qquad Q\D R^\succ.\\
\label{Dcap}Q\D Q'\quad\text{and}\quad R\D R'\qquad&\Rightarrow\qquad Q^\succ\cap R^\succ\D Q'^\succ\cap R'^\succ.\\
\label{Dcup}Q\D Q'\quad\text{and}\quad R\D R'\qquad&\Rightarrow\qquad Q\cup R\D Q'\cup R'.\\
\label{Ccup}Q\C Q'\quad\text{and}\quad R\C R'\qquad&\Rightarrow\qquad Q\cup R\C Q'\cup R'.
\end{align}
\end{prp}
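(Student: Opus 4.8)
The plan is to unpack the definitions of $\D$ and $\C$ in terms of the iterated operations $(\cdot)^\succ$ and $(\cdot)^\prec$ and to reduce everything to the transitivity of $\prec$ together with the observation~\eqref{Ddef2} that $Q\D R$ means $Q^\succ\subseteq R^{\succ\prec}$. First I would record the elementary fact that, when $\prec$ is transitive, $R^{\succ\prec}\subseteq R^\succ$ for every $R$ (if $q\prec r'$ and $r'\prec r$ with $r\in R$ then $q\prec r$), and hence $R^{\succ\prec}=R^\succ$; dually $R^\succ$ is `$\prec$-downward closed' in the appropriate sense. With this in hand, $Q\D R$ simply says $Q^\succ\subseteq R^\succ$, from which transitivity of $\D$ is immediate, and the equivalence $Q\D R\Leftrightarrow Q\D R^\succ$ in~\eqref{DC=>C} follows because $(R^\succ)^\succ=R^\succ$. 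Transitivity of $\C$ and the first two implications of~\eqref{DC=>C} then reduce to a bookkeeping exercise on the finite witnessing sets: given $Q\D F\prec R$ and $R\D G\prec S$, one checks $F\prec R\D G$ gives $F\C G$, i.e. $F\D F'\prec G$ for some finite $F'$, and then $Q\D F\D F'$ yields $Q\D F'\prec G\prec S$ (composing $\prec$ on subsets, using transitivity), witnessing $Q\C S$; the implication $Q\D S\C R\Rightarrow Q\C R$ is the special case where the left cover is already dense, and $Q\C R\Rightarrow Q\D R$ follows since $F\prec R$ forces $F\D R$ and $\D$ is transitive.

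For~\eqref{Dcap} I would argue pointwise: suppose $p\prec Q^\succ\cap R^\succ$, i.e. (after translating through~\eqref{Ddef2}, or directly) $p\in(Q^\succ\cap R^\succ)^\succ$; unwinding, there is $s$ with $p\prec s$, $s\in Q^\succ$ and $s\in R^\succ$. Since $s\in Q^\succ$ and $Q\D Q'$ we get $s\in Q'^{\succ\prec}=Q'^\succ$ (using transitivity), and similarly $s\in R'^\succ$. The subtlety is that we need a \emph{single} element below $p$ lying in both $Q'^\succ$ and $R'^\succ$; here one uses that $s\in Q'^\succ\cap R'^\succ$ already and $p\prec s$, so $p$ itself witnesses membership in $(Q'^\succ\cap R'^\succ)^\succ$ — no interpolation is needed, only transitivity to absorb the extra step. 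I expect this is the step most likely to hide a wrinkle, precisely because intersections of $(\cdot)^\succ$-sets do not distribute nicely, so I would be careful to phrase it as ``$s$ lies in both, and $p\prec s$'' rather than trying to find separate witnesses.

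Finally~\eqref{Dcup} and~\eqref{Ccup} are routine: for $\D$, if $p\prec Q\cup R$ then $p\prec Q$ or $p\prec R$, and in the first case $Q\D Q'$ gives some $r\prec Q'\subseteq Q'\cup R'$ with $r\prec p$, and symmetrically; for $\C$, if $Q\D F\prec Q'$ and $R\D G\prec R'$ then $Q\cup R\D F\cup G$ by the case just proved (applied with $F,G$ in place of $Q',R'$, noting $F\D F$ and $G\D G$ hold since $\prec$ is transitive, hence reflexive-enough on the relevant sets — more carefully, $Q\D F$ and $R\D G$ directly give $Q\cup R\D F\cup G$ by~\eqref{Dcup}), and $F\cup G\prec Q'\cup R'$ is clear, witnessing $Q\cup R\C Q'\cup R'$. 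The only thing to watch throughout is that ``$F\D F$'' type statements are used only via the already-established monotonicity/transitivity, never assuming reflexivity of $\prec$ itself; every application of transitivity of $\prec$ should be flagged explicitly since that is the sole hypothesis.
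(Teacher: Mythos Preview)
Your opening reduction contains a genuine error that propagates through most of the argument. You claim that transitivity of $\prec$ gives $R^{\succ\prec}\subseteq R^\succ$, but your parenthetical justification (``if $q\prec r'$ and $r'\prec r$ with $r\in R$ then $q\prec r$'') actually proves $R^{\succ\succ}\subseteq R^\succ$, which is a different statement. The set $R^{\succ\prec}$ consists of elements \emph{above} something in $R^\succ$, not below, and there is no reason such an element should itself lie in $R^\succ$. Concretely, take $P=\mathbb{Z}$ with $m\prec n\Leftrightarrow m<n$, $Q=\{1\}$, $R=\{0\}$: then $Q\D R$ holds (below any $q<1$ there is some $r<0$), yet $0\in Q^\succ\setminus R^\succ$, so your simplified form ``$Q\D R\Leftrightarrow Q^\succ\subseteq R^\succ$'' fails in both directions.

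This breaks several steps. In your argument for~\eqref{Dcap}, from $s\in Q^\succ$ and $Q\D Q'$ you only get $s\in Q'^{\succ\prec}$, i.e.\ some $p'\prec s$ with $p'\in Q'^\succ$; you do \emph{not} get $s\in Q'^\succ$. Similarly $s\in R^\succ$ and $R\D R'$ gives some $p''\prec s$ in $R'^\succ$, but $p'$ and $p''$ may differ, so you cannot conclude $s\in Q'^\succ\cap R'^\succ$. The paper handles this by descending sequentially: from $p\in Q^\succ\cap R^\succ$ first obtain $p'\prec p$ with $p'\in Q'^\succ$, then use transitivity to see $p'\in R^\succ$, then apply $R\D R'$ to get $p''\prec p'$ in $R'^\succ$, and finally transitivity again to place $p''$ in $Q'^\succ\cap R'^\succ$. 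Likewise, your claim ``$F\prec R$ forces $F\D R$'' (in the proof of $Q\C R\Rightarrow Q\D R$) and your proof of $Q\D R\Leftrightarrow Q\D R^\succ$ via ``$(R^\succ)^\succ=R^\succ$'' both rest on the same false identity; the paper instead argues directly with $Q^\succ\succ F^\succ\subseteq R^{\succ\succ}\subseteq R^\succ$ for the former and applies $Q\D R$ \emph{twice} for the latter. Your treatments of~\eqref{Dcup} and~\eqref{Ccup} are correct and match the paper.
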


\begin{proof}
If $Q\prec R\prec S$ then
\[Q\subseteq R^\succ\subseteq S^{\succ\succ}\subseteq S^\succ,\]
the middle $\subseteq$ following from $R\subseteq S^\succ$ and the last $\subseteq$ following from transitivity of $\prec$ on $P$.  Thus $\prec$ is transitive on $\mathcal{P}(P)$ and hence $\D$ is transitive, by \eqref{Ddef2}.

\begin{itemize}
\item[\eqref{DC=>C}] Consequently, $Q\D S\D F\prec R$ implies $Q\D F\prec R$, so $Q\D S\C R$ implies $Q\C R$.  Also $Q\D F\prec R$ implies $Q^\succ\succ F^\succ\subseteq R^{\succ\succ}\subseteq R^\succ$, i.e. $Q\C R$ implies $Q\D R$.  Thus $Q\C R\C S$ implies $Q\D R\C S$ and hence $Q\C S$, i.e. $\C$ is also transitive.  Finally note that if $Q\D R$ then, for all $q\prec Q$, we have $r\prec q\prec Q$ with $r\prec R$.  By transitivity, $r\prec Q$ so we can further take $r'\prec r$ (with $r'\prec R$), so $r'\prec q$ and $r'\in R^{\succ\succ}$, i.e. $Q\D R$ implies $Q\D R^\succ$.

\item[\eqref{Dcap}] Assume $Q^\succ\succ Q'^\succ$ and $R^\succ\succ R'^\succ$.  For any $p\in Q^\succ\cap R^\succ$, we have $p'\prec p$ with $p'\prec Q'$, as $Q^\succ\succ Q'^\succ$.  Then $p'\prec p\prec R$ so $p'\prec R$, by the transitivity of $\prec$, and hence we have $p''\prec p'$ with $p''\prec R'$, as $R^\succ\succ R'^\succ$.  Thus $p''\prec p'\prec Q'$ so $p''\prec Q'$ and $p''\prec p'\prec p$ so $p''\prec p$, again by the transitivity of $\prec$.  Thus $p''\in R'^\succ\cap Q'^\succ$ and, arguing as above, we can further obtain $p'''\prec p''$ so $p'''\in(R'^\succ\cap Q'^\succ)^\succ$.  Then
\[(R^\succ\cap Q^\succ)^\succ\subseteq(R^\succ\cap Q^\succ)\succ(R'^\succ\cap Q'^\succ)^\succ,\]
i.e. $(R^\succ\cap Q^\succ)^\succ\succ(R'^\succ\cap Q'^\succ)^\succ$, as required.

\item[\eqref{Dcup}] Simply note that $Q^\succ\succ Q'^\succ$ and $R^\succ\succ R'^\succ$ implies
\[(Q\cup R)^\succ=Q^\succ\cup R^\succ\succ Q'^\succ\cup R'^\succ=(Q'\cup R')^\succ.\]

\item[\eqref{Ccup}] Now $Q\D F\prec Q'$ and $R\D G\prec R'$ implies $Q\cup R\D F\cup G\prec Q'\cup R'$.\qedhere
\end{itemize}
\end{proof}

\subsection{Round Subsets}

Other properties of $\C$ and $\D$ require an extra assumption.

\begin{dfn}\label{def:round}
We call $R\subseteq P$ \emph{round} if $R\succ R$, i.e.
\[\label{Round}\tag{Round}\forall r\in R\ \exists q\in R\ (q\prec r).\]
\end{dfn}

Equivalently, $R\subseteq P$ is round if iff $R$ has no strictly minimal elements (where $r\in R$ is strictly minimal if $q\not\prec r$, for any $q\in R$).  The `round' terminology is common when talking about ideals, for example, in domain theory \textendash\, see \cite[Proposition 5.1.33]{Goubault2013} or \cite[Proposition III-4.3]{GierzHofmannKeimelLawsonMisloveScott2003}.  As long as $P$ itself is round, $P\D P$ and $\emptyset$ is the only (non-strict) minimum for $\D$ and $\C$.

\begin{prp}
Consider the following statements.
\begin{align}
\label{precMin}Q\prec\emptyset\qquad\Leftrightarrow\qquad Q=\emptyset.\\
\label{DMin}Q\D\emptyset\qquad\Leftrightarrow\qquad Q=\emptyset.\\
\label{CMin}Q\C\emptyset\qquad\Leftrightarrow\qquad Q=\emptyset.
\end{align}
In general, \eqref{precMin} always holds.  If $P$ is round then \eqref{DMin} and \eqref{CMin} also hold.
\end{prp}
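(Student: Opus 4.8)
The plan is to reduce everything to the single observation that $\emptyset^\succ=\emptyset$, since there is no element of $\emptyset$ available to witness the defining existential. Granting this, \eqref{precMin} is immediate and needs no hypothesis on $P$: unwinding the definition, $Q\prec\emptyset$ says exactly $Q\subseteq\emptyset^\succ=\emptyset$, i.e. $Q=\emptyset$, while the converse $\emptyset\prec\emptyset$ was already recorded in the text.

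For \eqref{DMin} I would use the compact reformulation \eqref{Ddef2}, $Q\D R\Leftrightarrow Q^\succ\subseteq R^{\succ\prec}$. Specialising to $R=\emptyset$ and using $\emptyset^\succ=\emptyset$ (hence also $\emptyset^{\succ\prec}=\emptyset$), this becomes $Q\D\emptyset\Leftrightarrow Q^\succ=\emptyset$. The implication $Q=\emptyset\Rightarrow Q\D\emptyset$ is then trivial. For the reverse, I argue contrapositively: if $Q\neq\emptyset$, pick $q\in Q$; roundness of $P$ (that is, $P\succ P$) produces $q'\in P$ with $q'\prec q$, so $q'\in Q^\succ$ and thus $Q^\succ\neq\emptyset$. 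Hence $Q^\succ=\emptyset$ forces $Q=\emptyset$, giving \eqref{DMin}.

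Finally, for \eqref{CMin} I would unpack the definition of $\C$: $Q\C\emptyset$ means there is a finite $F$ with $Q\D F\prec\emptyset$. But $F\prec\emptyset$ forces $F\subseteq\emptyset^\succ=\emptyset$, so $F=\emptyset$ and $Q\C\emptyset$ collapses to $Q\D\emptyset$; then \eqref{CMin} follows from \eqref{DMin}, the converse being the already-noted $\emptyset\C\emptyset$.

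I do not anticipate a real obstacle here; the two points requiring a little care are (i) not to route the passage from $\C$ to $\D$ through \eqref{DC=>C}, which was proved only under transitivity of $\prec$ \textendash\ the direct argument via $F=\emptyset$ avoids this \textendash\ and (ii) to keep straight the (mild) asymmetry that $\emptyset^\succ=\emptyset$ whereas $\emptyset$ is nevertheless $\prec$-, $\D$- and $\C$-below every subset. The sole genuine use of roundness is the forward direction of \eqref{DMin} (and through it \eqref{CMin}): without it, a strictly minimal element of $P$ would give a nonempty singleton $Q$ with $Q^\succ=\emptyset$, so \eqref{DMin} and \eqref{CMin} can fail.
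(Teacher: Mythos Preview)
Your proof is correct and follows essentially the same approach as the paper's own proof: both reduce \eqref{precMin} to $\emptyset^\succ=\emptyset$, use \eqref{Ddef2} together with roundness to get $Q^\succ=\emptyset\Rightarrow Q=\emptyset$ for \eqref{DMin}, and collapse \eqref{CMin} to \eqref{DMin} by observing that $F\prec\emptyset$ forces $F=\emptyset$. Your added remarks about avoiding \eqref{DC=>C} and about the necessity of roundness are accurate and go slightly beyond what the paper records, but the core argument is the same.
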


\begin{proof}\
\begin{itemize}
\item[\eqref{precMin}]  $Q\prec\emptyset$ means $Q\subseteq\emptyset^\succ=\emptyset$ and hence $Q=\emptyset$.
\item[\eqref{DMin}]  $Q\D\emptyset$ means $Q^\succ\succ\emptyset^\succ=\emptyset$ and hence $Q^\succ=\emptyset$, by \eqref{precMin}.  If $P$ is round then this means $Q=\emptyset$.
\item[\eqref{CMin}]  $Q\C\emptyset$ means $Q\D F\prec\emptyset$ so $F=\emptyset$, by \eqref{precMin}, and hence $Q=\emptyset$, by \eqref{DMin}.\qedhere
\end{itemize}
\end{proof}

\begin{prp}
If $P$ is round and $\prec$ is transitive then
\begin{align}
\label{QsubR}Q\subseteq R\qquad&\Rightarrow\qquad Q\D R.\\
\label{FprecQ}F\prec R\qquad&\Rightarrow\qquad F\C R,\quad\text{for all finite }F\subseteq P.
\end{align}
\end{prp}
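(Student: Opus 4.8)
The plan is to read both implications straight off the definitions, with roundness of $P$ supplying the single ``descent step'' that bridges plain inclusion and the dense cover relation.

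For \eqref{QsubR}, I would expand $Q\D R$ using \autoref{Covers}: it is enough to show that for every $q$ with $q\prec Q$ there is some $r$ with $r\prec R$ and $r\prec q$. Given such a $q$, choose $q_0\in Q$ with $q_0\prec q$; since $Q\subseteq R$, also $q_0\in R$. Roundness of $P$, applied to the element $q_0$, yields $r\in P$ with $r\prec q_0$. Transitivity of $\prec$ then gives $r\prec q$, while $r\prec q_0\in R$ gives $r\prec R$. As $q$ was arbitrary, $Q\D R$.

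For \eqref{FprecQ}, I would check that the given finite set $F$ is itself an admissible witness in the \eqref{CompactCover} clause: applying \eqref{QsubR} with $Q=R=F$ gives $F\D F$, and $F\prec R$ is the hypothesis, so $F\D F\prec R$ with $F$ finite, whence $F\C R$ by the definition of $\C$.

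I do not expect any substantial obstacle here. The only points requiring care are the clash between the single-element and subset readings of $\prec$ when rewriting \eqref{DenseCover} in the form ``there is an $r$ below $q$ and below $R$'', and the observation that roundness is invoked on an element of $P$ (namely $q_0$), which is precisely why it is roundness of $P$, and not of $Q$ or $R$, that is the right hypothesis.
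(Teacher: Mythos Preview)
Your approach is essentially the paper's: for \eqref{QsubR} the paper also reduces to showing $R\D R$ (you do this implicitly by taking $Q=R=F$ in \eqref{FprecQ}) using roundness for one descent step and transitivity to propagate, and for \eqref{FprecQ} the paper does exactly what you do, namely $F\D F\prec R$.

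There is, however, a small slip in unpacking the notation in your first paragraph. From $q\prec Q$ you get $q_0\in Q$ with $q\prec q_0$ (not $q_0\prec q$): recall that $\{q\}\prec Q$ means $q\in Q^\succ$, i.e.\ $q$ lies \emph{below} some element of $Q$. With the direction corrected, apply roundness to $q$ (rather than to $q_0$) to obtain $r\prec q$, and then transitivity gives $r\prec q_0\in Q\subseteq R$, hence $r\prec R$. The rest stands. You even flagged this spot as the delicate one; it is precisely where the sign flipped.
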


\begin{proof}\
\begin{itemize}
\item[\eqref{QsubR}]  If $P$ is round then $Q\subseteq Q^{\succ\prec}$, for all $Q\subseteq P$.  Replacing $Q$ with $R^\succ$ yields $R^\succ\subseteq R^{\succ\succ\prec}\subseteq R^{\succ\prec}$, as $R^\succ\subseteq R^{\succ\succ}$ because $\prec$ is transitive, i.e. $R\D R$.  Thus $Q\subseteq R\D R$ implies $Q\D R$, by \eqref{Daux}.

\item[\eqref{FprecQ}]  By \eqref{QsubR}, $F\prec R$ implies $F\D F\prec R$, which means $F\C R$.\qedhere
\end{itemize}
\end{proof}

\subsection{Centred Subsets}

\begin{dfn}
We define the \emph{formal meet} $\widehat{Q}$ of any $Q\subseteq P$ by
\[\widehat{Q}=\bigcap_{q\in Q}q^\succ\]
We call $Q\subseteq P$ \emph{centred} if $\widehat{F}\neq\emptyset$, for all finite $F\subseteq Q$.
\end{dfn}

The term `centred' comes from \cite[Definition III.3.23]{Kunen2011}.  It will be convenient to also define centred versions of the relations we have considered so far.

\begin{dfn}
For any relation $\sqsubset$ on $\mathcal{P}(P)$, we define a centred version $\widehat{\sqsubset}$ by
\[Q\mathrel{\widehat{\sqsubset}}R\qquad\Leftrightarrow\qquad\exists\text{ finite }F\subseteq Q\ (\widehat{F}\mathrel{\sqsubset}R).\]
\end{dfn}

We are particularly interested in the centred compact cover relation $\CC$, which is related to the centred dense cover relation $\DC$ as before, i.e.
\[Q\CC R\qquad\Leftrightarrow\qquad\exists\text{ finite }F\ (Q\DC F\prec R).\]
We also have the following auxiliarity property (note the first relation is $\supseteq$ not $\subseteq$)
\[Q\supseteq Q'\CC R'\subseteq R\qquad\Rightarrow\qquad Q\CC R\qquad\Rightarrow\qquad Q\C R,\]
at least when $Q\neq\emptyset$ (while $\emptyset\CC R$ means $\widehat{\emptyset}=P\C R$).  Also, by \eqref{CMin},
\[Q\text{ is centred}\qquad\Leftrightarrow\qquad Q\not\CC\emptyset,\]
at least when $P$ is round.  If $Q$ is round then $Q\CC R$ also can be interpreted topologically as saying that the intersection of the sets represented by $Q$ (or their compact closures) are covered by $R$, i.e.
\[Q\CC R\qquad\text{represents}\qquad\bigcap Q\subseteq\bigcup R.\]

\begin{lem}\label{CentredDense}
If $\prec$ is transitive and $F$ is finite then
\[Q\DC F\quad\text{and}\quad Q\not\CC R\qquad\Rightarrow\qquad\exists f\in F\ (Q\cup\{f\}\not\CC R).\]
\end{lem}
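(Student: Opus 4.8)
The plan is to prove the contrapositive: assuming $Q\DC F$ together with $Q\cup\{f\}\CC R$ for every $f\in F$, I will exhibit a finite $D\subseteq Q$ and a finite $G$ with $G\prec R$ and $\widehat D\D G$. Then $\widehat D\D G\prec R$ gives $\widehat D\C R$, and since $D$ is a finite subset of $Q$ this forces $Q\CC R$, contradicting $Q\not\CC R$.

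First I would unwind the hypotheses. From $Q\DC F$ there is a finite $E\subseteq Q$ with $\widehat E\D F$. For each $f\in F$, unwinding $Q\cup\{f\}\CC R$ gives a finite $E_f\subseteq Q\cup\{f\}$ and a finite $G_f$ with $\widehat{E_f}\D G_f\prec R$; replacing $E_f$ by $(E_f\cap Q)\cup\{f\}$ only enlarges the index set and hence shrinks the formal meet, so I may assume $\widehat{E_f}=\widehat{D_f}\cap f^\succ$ with $D_f:=E_f\cap Q$ a finite subset of $Q$. Now put $D=E\cup\bigcup_{f\in F}D_f\subseteq Q$ and $G=\bigcup_{f\in F}G_f$; both are finite and $G\prec R$. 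Since $\widehat D\subseteq\widehat E$ and $\widehat D\subseteq\widehat{D_f}$ for each $f$, the auxiliarity rules \eqref{Daux} give $\widehat D\D F$ as well as $\widehat D\cap f^\succ\D G$ for each $f\in F$; as $F$ is finite, \eqref{Dcup} then yields $\widehat D\cap F^\succ=\bigcup_{f\in F}(\widehat D\cap f^\succ)\D G$.

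It remains to upgrade $\widehat D\D F$ and $\widehat D\cap F^\succ\D G$ to $\widehat D\D G$. The crucial observation — and the point where transitivity of $\prec$ is exactly what is needed — is that a formal meet is downward closed under $\prec$: if $p\in\widehat D$ and $p'\prec p$ then $p'\prec d$ for every $d\in D$, so $p'\in\widehat D$; in particular $\widehat D^\succ\subseteq\widehat D$. Given $q\prec\widehat D$ (hence $q\in\widehat D$), applying $\widehat D\D F$ to $q$ produces $r\prec q$ with $r\prec F$, whence $r\in\widehat D\cap F^\succ$ and still $r\prec\widehat D$; applying $\widehat D\D F$ again, now to $r$, produces $r'\prec r$ with $r'\prec F$, whence $r'\in(\widehat D\cap F^\succ)^\succ$; feeding $r'$ into $\widehat D\cap F^\succ\D G$ yields $g\prec r'$ with $g\prec G$, and two applications of transitivity give $g\prec q$. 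This is exactly the witness required for $\widehat D\D G$. The degenerate cases ($F=\emptyset$, or $Q=\emptyset$ so $D=\emptyset$ and $\widehat D=P$) are absorbed by the same argument, since $P$ is trivially downward closed and $\emptyset\D G$, $\emptyset\prec R$ hold automatically.

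I expect the main obstacle to be precisely this last combination step. The naive approach of intersecting the cover $\widehat D\D F$ with the individual covers $\widehat D\cap f^\succ\D G_f$ only lands the interpolating elements in $\widehat D^\succ\cap F^\succ$, whereas to invoke $\widehat D\cap F^\succ\D G$ one needs them in $(\widehat D\cap F^\succ)^\succ$; bridging this gap is exactly what the downward closure of formal meets, combined with a second application of the dense cover $\widehat D\D F$, achieves. (Roundness of $P$ would give an alternative route, but it is deliberately not assumed here.) Everything else — unwinding the centred relations $\DC$ and $\CC$, and the finite bookkeeping — is routine, using only the auxiliarity properties, \eqref{Dcup}, and transitivity of $\D$ from \autoref{DCproperties}.
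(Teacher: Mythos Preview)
Your argument is correct and follows essentially the same route as the paper: prove the contrapositive by collecting the finite witnesses $E,D_f,G_f$ into one $D\subseteq Q$ and one $G\prec R$, then show $\widehat D\D G$ by first using $\widehat D\D F$ to push an arbitrary $q\prec\widehat D$ down into $\widehat D\cap F^\succ$ and then invoking the individual covers $\widehat D\cap f^\succ\D G$. The only notable difference is your explicit \emph{double} application of $\widehat D\D F$: the paper applies it once to land in $\widehat H\cap\widehat f$ and then immediately invokes $\widehat H\cap\widehat f\D I$, tacitly treating membership in that set as sufficient; your second pass guarantees the stronger $r'\in(\widehat D\cap F^\succ)^\succ$ actually required by the definition of $\D$, which is a genuine refinement since roundness of $P$ is not among the hypotheses here.
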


\begin{proof}
Assume we have finite $G\subseteq Q$ with $\widehat{G}\D F$ and, for all $f\in F$, we have $Q\cup\{f\}\CC R$, which means we have finite $H_f\subseteq Q$ and $I_f\prec R$ with $\widehat{H}_f\cap\widehat{f}\D I_f$.  We need to show that $Q\CC R$.  Putting $H=\bigcup_{f\in F}H_f$ and $I=\bigcup_{f\in F}I_f$ we then have, for all $f\in F$,
\[\widehat{H}\cap\widehat{f}\ \subseteq\ \widehat{H}_f\cap\widehat{f}\ \D\ I_f\ \subseteq\ I.\]
Now take any $p\in\widehat{G}\cap\widehat{H}$.  As $p\in\widehat{G}\D F$, we have $q\prec p$ and $f\in F$ with $q\prec f$.  Then $q\in\widehat{H}\cup\widehat{f}\D I$ so we have $r\prec q\prec p$ with $r\prec I$.  As $p$ was arbitrary, $\widehat{G}\cap\widehat{H}\D I\prec R$ so $\widehat{G}\cap\widehat{H}\C R$ and hence $Q\CC R$.
\end{proof}

By \eqref{CMin}, we can apply this result with $R=\emptyset$ to obtain the following corollary (note here that $P$ has to be round, otherwise we could potentially have $F=\emptyset$).

\begin{cor}\label{CentredDenseCor}
If $P$ is round, $\prec$ is transitive and $F$ is finite then
\[Q\DC F\text{ and $Q$ is centred}\qquad\Rightarrow\qquad Q\cup\{f\}\text{ is centred, for some }f\in F.\]
\end{cor}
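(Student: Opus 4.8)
The plan is to deduce this directly from \autoref{CentredDense} by specialising to $R=\emptyset$ and translating between ``centred'' and ``$\not\CC\emptyset$''. First I would record the dictionary: since $P$ is round, \eqref{CMin} gives $Q\C\emptyset\Leftrightarrow Q=\emptyset$, so $Q\CC\emptyset$ (which unravels to $\widehat{F}\C\emptyset$ for some finite $F\subseteq Q$) holds iff $\widehat{F}=\emptyset$ for some finite $F\subseteq Q$, i.e. iff $Q$ is not centred. Thus for any $Q$,
\[Q\text{ is centred}\qquad\Longleftrightarrow\qquad Q\not\CC\emptyset,\]
as already noted in the text preceding the lemma.

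With this in hand, the hypotheses ``$Q\DC F$ and $Q$ is centred'' read as ``$Q\DC F$ and $Q\not\CC\emptyset$'', and the desired conclusion ``$Q\cup\{f\}$ is centred for some $f\in F$'' reads as ``$Q\cup\{f\}\not\CC\emptyset$ for some $f\in F$''. Since $\prec$ is transitive and $F$ is finite, this is precisely the statement produced by \autoref{CentredDense} applied with $R=\emptyset$, so the corollary follows immediately once the translation is in place.

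The only subtlety to flag is the double use of roundness. It enters once through \eqref{CMin} in the ``centred $\Leftrightarrow$ $\not\CC\emptyset$'' dictionary, and once to ensure that the $f$ delivered by \autoref{CentredDense} actually exists, i.e. that $F\neq\emptyset$: were $F=\emptyset$, then $Q\DC\emptyset$ would yield a finite $G\subseteq Q$ with $\widehat{G}\D\emptyset$, hence $\widehat{G}=\emptyset$ by \eqref{DMin} (again using roundness), contradicting that $Q$ is centred. I do not expect any real obstacle — all of the combinatorial content sits in \autoref{CentredDense}, and the corollary is just its $R=\emptyset$ instance read through \eqref{CMin}.
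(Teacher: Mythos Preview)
Your proposal is correct and matches the paper's own argument essentially verbatim: the paper simply says ``By \eqref{CMin}, we can apply this result with $R=\emptyset$'', with a parenthetical remark that roundness is needed to rule out $F=\emptyset$. You have spelled out both the dictionary via \eqref{CMin} and the $F\neq\emptyset$ point in more detail than the paper does, but the approach is identical.
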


\subsection{Frink Filters}
Recall that  $U\subseteq P$ is a \emph{filter} if
\[\tag{Filter}u,v\in U\qquad\Leftrightarrow\qquad\exists w\in U\ (u,v\succ w).\]
The $\Leftarrow$ part is saying $U$ is upwards closed while the $\Rightarrow$ part is saying $U$ is downwards directed.  In contrast to Exel's construction, our tight subsets need not be filters, at least in the usual sense.  However, they will satisfy the following weaker condition.

\begin{dfn}
We call $U\subseteq P$ a \emph{Frink filter} if
\[\label{FrinkFilter}\tag{Frink Filter}u\in U\qquad\Leftrightarrow\qquad\exists w\prec u\ (U\mathrel{\widehat{\prec}}w).\hspace{50pt}\]
\end{dfn}

The $\Leftarrow$ part of \eqref{FrinkFilter} can be stated more explicitly by saying $u\in U$ whenever $u$ is above some upper bound $w$ for a formal meet $\widehat{F}$, where $F$ is a finite subset of $U$.  In particular, taking singleton $F$, we see that
\[U\text{ is a Frink Filter}\qquad\Rightarrow\qquad U\text{ is upwards closed},\]
as $u\succ w\in U$ implies $\widehat{w}=w^\succ\prec w\prec u$ and hence $u\in U$.

Likewise, the $\Rightarrow$ part of \eqref{FrinkFilter} is immediate if $U$ is round because then, for any $u\in U$, we have $w\prec u$ with $w\in U$ and $\widehat{w}=w^\succ\prec w$.  Conversely, if $P$ satisfies \eqref{Interpolation},
\[U\text{ is a Frink filter}\qquad\Rightarrow\qquad U\text{ is round}.\]
Indeed, $u\in U$ implies $U\mathrel{\widehat{\prec}}w\prec u$, for some $w$, and \eqref{Interpolation} then yields $v$ with $w\prec v\prec u$.  Thus $v\in U$, by the $\Leftarrow$ part of \eqref{FrinkFilter}.

If $P$ is round then, for every $p\in P$, we have $q\in P$ with $(\emptyset\prec)q\prec p$ and hence $P$ is the only possible non-centred Frink filter.  Indeed, as long as $P$ itself is not centred (which will be true except in trivial cases \textendash\, see \autoref{S0tight} \eqref{Ptight} below),
\[U\text{ is a centred Frink filter}\qquad\Leftrightarrow\qquad U\text{ is a proper (i.e. $\neq P$) Frink filter}.\]

\begin{rmk}\label{Frink=>Filter}
For posets, Frink filters are dual to the Frink ideals defined in \cite[\S5 Definition]{Frink1954}.  For $\wedge$-semilattices, Frink filters are just filters.  Indeed, if $(P,\leq)$ is a $\wedge$-semilattice and $F=\{u,v\}\subseteq U$, for some Frink filter $U\subseteq P$, then
\[\widehat{F}=(u\wedge v)^\geq\leq u\wedge v\]
so $u,v\geq u\wedge v\in U$, i.e. $U$ is a filter.  To see why Frink filters might be more natural for general posets, it is instructive to consider Exel's original motivation for introducing tight filters.  First note that when $B$ is a Boolean algebra, the ultrafilters(=maximal proper filters) are closed in $\mathcal{P}(B)$ when we identify each subset of $B$ with its characteristic function in $\{0,1\}^B$ (with its usual product topology).  Exel observed that ultrafilters are no longer closed when $B$ is an arbitrary $\wedge$-semilattice.  To obtain a compact space, one therefore needs to consider the closure of the ultrafilters, i.e. the tight filters.  But when $B$ is an arbitrary poset, even the filters may not be closed and this is why one must consider more general Frink filters.  So as we get more general, this shift can be represented as
\begin{align*}
\text{Ultrafilters}\qquad&\rightarrow\qquad\text{Tight Filters}&&\rightarrow\qquad\text{Tight Frink Filters}.\\
\text{Boolean Algebras}\qquad&\rightarrow\qquad\text{$\wedge$-Semilattices}&&\rightarrow\qquad\text{Posets}.\\
\text{\cite{Stone1936}}\qquad&\rightarrow\qquad\text{\cite{Exel2008}}&&\rightarrow\qquad\text{\cite{BiceStarling2016}}.
\end{align*}
In fact, our tight subsets will automatically be Frink filters \textendash\, see \autoref{Tight=>Frink}.
\end{rmk}

\subsection{Disjoint Subsets}

\begin{dfn}
We define the \emph{disjoint relation} $\perp$ on $\mathcal{P}(P)$ by
\[\label{Disjoint}\tag{Disjoint}Q\perp R\qquad\Leftrightarrow\qquad Q^\succ\cap R^\succ=\emptyset.\]
We also define $Q^\perp=\{p\in P:Q\perp p\}$ so that we always have $Q\perp Q^\perp$ and
\[Q^\perp=\bigcap_{q\in Q}q^\perp.\]
\end{dfn}

As the the name suggests, again thinking of the elements of $P$ as open subsets,
\[Q\perp R\qquad\text{represents}\qquad(\bigcup Q)\cap(\bigcup R)=\emptyset\]
(see \eqref{QperpR} below).  Note $P$ is round if and only if $\perp$ is irreflexive on $P$, i.e.
\[\label{Irreflexive}\tag{Irreflexive}\forall p\ (p\not\perp p).\]
Also $\D$ could have equivalently been defined from $\perp$ as
\[Q\D R\qquad\Leftrightarrow\qquad Q^\succ\cap R^\perp=\emptyset.\]
Moreover, just as $\D$ is auxiliary to $\subseteq$ \textendash\, see \eqref{Daux} \textendash\, $\perp$ is auxiliary to $\D$.

\begin{prp}
If $\prec$ is transitive then
\[\label{perpAuxiliary}\tag{$\perp$-Auxiliary}Q\D Q'\perp R\qquad\Rightarrow\qquad Q\perp R.\]
\end{prp}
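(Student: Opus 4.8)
The plan is to unwind both relations in terms of the derived sets and argue directly. Recall that $Q'\perp R$ means $Q'^\succ\cap R^\succ=\emptyset$, while $Q\D Q'$ means $Q^\succ\subseteq Q'^{\succ\prec}$ by \eqref{Ddef2}. To prove $Q\perp R$, i.e. $Q^\succ\cap R^\succ=\emptyset$, suppose toward a contradiction that there is some $p\in Q^\succ\cap R^\succ$.

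Since $p\in Q^\succ$ and $Q\D Q'$, the inclusion $Q^\succ\subseteq Q'^{\succ\prec}$ supplies a witness $s$ with $s\prec p$ and $s\in Q'^\succ$. On the other hand, $p\in R^\succ$ gives some $r\in R$ with $p\prec r$. Now apply transitivity of $\prec$ on $P$ to the chain $s\prec p\prec r$ to conclude $s\prec r$, so $s\in R^\succ$. Then $s\in Q'^\succ\cap R^\succ$, contradicting $Q'\perp R$; hence $Q\perp R$.

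The argument is only a couple of lines, and the only real point to watch is the bookkeeping of the conventions, since for subsets $R\succ Q$ and $Q\prec R$ are genuinely different; once everything is written out in terms of $Q^\succ$, $Q'^\succ$, $R^\succ$ there is nothing delicate. Transitivity is used exactly once, in the step $s\prec p\prec r\Rightarrow s\prec r$, which is precisely the mechanism transporting the $\D$-witness $s$ (produced below $p$) into $R^\succ$; without transitivity this step, and indeed the statement, can fail. If one prefers to avoid contradiction, the same computation can be run through the equivalent description $Q\D Q'\Leftrightarrow Q^\succ\cap Q'^\perp=\emptyset$: it then suffices to check $R^\succ\subseteq Q'^\perp$, and for $p\in R^\succ$ with $p\prec r\in R$ transitivity gives $p^\succ\subseteq R^\succ$, whence $Q'^\succ\cap p^\succ\subseteq Q'^\succ\cap R^\succ=\emptyset$, i.e. $p\in Q'^\perp$.
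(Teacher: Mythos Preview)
Your proof is correct and is essentially the same contrapositive argument as the paper's: pick $p\in Q^\succ\cap R^\succ$, use $Q\D Q'$ to get $s\prec p$ with $s\in Q'^\succ$, and then use transitivity along $s\prec p\prec r\in R$ to land $s$ in $Q'^\succ\cap R^\succ$. The only difference is that you make the transitivity step explicit (the paper compresses it into ``and hence $q'\in Q'^\succ\cap R^\succ$''), and you add the optional direct reformulation via $R^\succ\subseteq Q'^\perp$, which is fine but not needed.
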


\begin{proof}
If we had $Q\not\perp R$, then we would have $q\in Q^\succ\cap R^\succ$.  Then $Q\D Q'$ would yield $q'\prec q$ with $q'\prec Q'$ and hence $q'\in Q'^\succ\cap R^\succ$, i.e. $Q'\not\perp R$.
\end{proof}

\subsection{A Selection Principle}

Later we will need the following selection principle to obtain an certain subset from a family of finite subsets (in fact, for the most part a weaker version which could be derived from K\"{o}nig's lemma would suffice, but the full strength version will be required once in the proof of \autoref{O_FSubset} below).  Roughly speaking,
\begin{enumerate}
\item $\Delta$ represents the finite families of open sets with non-empty intersection.
\item $\Gamma$ represents a round collection of finite families whose unions have the FIP (i.e. finite subsets have non-empty intersection).
\end{enumerate}
The goal is to select one open set in each family in $\Gamma$ in a coherent way so that the chosen sets still form round subset with the finite intersection property.

\begin{lem}\label{SelectionPrinciple}
Say $\Delta$ and $\Gamma$ are collections of finite subsets of $P$ such that
\begin{gather}
\label{succClosed}\tag{$\succ$-Closed}d\succ D\in\Delta\qquad\Rightarrow\qquad\{d\}\cup D\in\Delta\\
\label{precRound}\tag{$\prec$-Round}\forall F\in\Gamma\ \exists G\in\Gamma\ (G\prec F).\\
\label{DeltaCentred}\tag{$\Delta$-Centred}\forall\text{ finite }\Phi\subseteq\Gamma\ \exists D\in\Delta\ \forall F\in\Phi\ (F\cap D\neq\emptyset).
\end{gather}
Then we have round $R\subseteq P$ such that
\begin{gather}
\label{FIP}\tag{$\Delta$-FIP}\forall\text{ finite }D\subseteq R\ (D\in\Delta).\\
\label{Selector}\tag{$\Gamma$-Selector}\forall F\in\Gamma\ (R\cap F\neq\emptyset).
\end{gather}
\end{lem}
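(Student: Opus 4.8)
The plan is to build $R$ by a Zorn's Lemma argument on partial selectors, combined with the selection principle \eqref{SelectionPrinciple} hypotheses to show that a maximal such object is a genuine selector. First I would consider the poset $\mathcal{S}$ of all subsets $S \subseteq P$ satisfying the two ``closure'' conditions we want to preserve along the way: that every finite subset of $S$ lies in $\Delta$ (so $S$ is $\Delta$-centred in the sense of \eqref{FIP}), and that $S$ is \emph{extendable}, meaning that for every finite $\Phi \subseteq \Gamma$ there is a $D \in \Delta$ with $S \subseteq D^\succ$ (or rather $S \subseteq D$ after closing up appropriately) and $F \cap D \neq \emptyset$ for all $F \in \Phi$ --- i.e. $S$ does not obstruct the $\Delta$-centredness needed to keep selecting. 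The empty set lies in $\mathcal{S}$ by \eqref{DeltaCentred}, and I would check that $\mathcal{S}$ is closed under unions of chains: the $\Delta$-FIP condition is finitary so passes to chain unions immediately, and the extendability condition is a bit more delicate but should follow because any finite subset of the chain union already lies in some member of the chain.

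Next, given a maximal element $R_0 \in \mathcal{S}$ via Zorn, I would argue that for each $F \in \Gamma$ we have $R_0 \cap F \neq \emptyset$. Suppose not, for some fixed $F$. Using \eqref{DeltaCentred} applied to finite $\Phi \subseteq \Gamma$ together with $F$ itself and the already-chosen elements, I want to find some $f \in F$ such that $R_0 \cup \{f\}$ is still in $\mathcal{S}$, contradicting maximality. This is exactly the kind of ``finite branching'' step where one element of $F$ must work: if every $f \in F$ failed, then for each $f$ there is a finite $\Phi_f \subseteq \Gamma$ witnessing the failure of extendability, and amalgamating the $\Phi_f$ over the finite set $F$ and applying \eqref{DeltaCentred} to the union $\bigcup_f \Phi_f$ (throwing in $F$ as well) produces a single $D \in \Delta$ meeting every member, and in particular meeting $F$, which then picks out a good $f$ --- here \eqref{succClosed} is used to absorb $d$'s into $D$ so that $R_0 \cup \{f\} \subseteq D$ is maintained as an element of $\Delta$.

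The $\prec$-roundness of the final $R$ is the last point: $R_0$ as constructed need not be round, so I would take $R = R_0^{\succ}$ restricted suitably, or better, replace $R_0$ at the end by using \eqref{precRound} --- each $F \in \Gamma$ has some $G \in \Gamma$ with $G \prec F$, and the selected element $g \in R_0 \cap G$ has $g \prec f$ for some $f \in F$, so setting $R$ to be the downward-saturation of the selected set under these witnesses makes $R$ round while retaining \eqref{FIP} (since $\prec$ being transitive and the relevant meets surviving keeps finite subsets in $\Delta$, using \eqref{succClosed} once more) and \eqref{Selector}. I expect the main obstacle to be getting the bookkeeping of the extendability invariant exactly right --- precisely which finite families $\Phi$ must be quantified over, and ensuring that adding one new element $f$ to $R_0$ keeps \emph{all} of them satisfiable simultaneously rather than one at a time; this is where \eqref{DeltaCentred} must be invoked with a cleverly chosen union of finitely many $\Phi$'s, and it is the step that genuinely needs the full strength of the hypothesis rather than a bare K\"onig's lemma argument.
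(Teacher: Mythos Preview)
Your approach differs substantially from the paper's. You maximise over subsets $R_0\subseteq P$ satisfying an extendability invariant; the paper instead indexes $\Gamma=(F_\lambda)_{\lambda\in\Lambda}$, fixes $\theta:\Lambda\to\Lambda$ with $F_{\theta(\lambda)}\prec F_\lambda$, and \emph{minimises} (via Zorn) over $\Delta$-centred families $(G_\lambda)_{\lambda\in\Lambda}$ of finite sets satisfying $G_{\theta(\lambda)}\prec G_\lambda$ and $G_\lambda\subseteq F_\lambda$, ordered componentwise by inclusion. It then argues, via a removability lemma that uses \eqref{succClosed}, that in a minimal family each $G_\lambda$ must be a singleton, and sets $R=\bigcup_\lambda G_\lambda$.

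There are two genuine gaps in your sketch. The more serious one is roundness. Your fix --- for each $F\in\Gamma$ take $G\prec F$ in $\Gamma$ and $g\in R_0\cap G$, note $g\prec f$ for some $f\in F$ --- does not produce a round selector: the specific $f\in F$ you land on need not coincide with whatever element of $R_0\cap F$ you had, and the $g$ you picked from $R_0\cap G$ need not coincide with the element produced when you run the same step one level down with some $H\prec G$. Nothing forces the selections at different levels to be $\prec$-coherent with one another, so no amount of ``downward saturation'' yields a round set inside $R_0$. The paper's minimisation is designed precisely to avoid this: once each $G_\lambda$ is a singleton $\{g_\lambda\}$, the standing constraint $G_{\theta(\lambda)}\prec G_\lambda$ forces $g_{\theta(\lambda)}\prec g_\lambda$, so roundness of $R$ is automatic.

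The second gap concerns $\Delta$-FIP. With the natural reading of your invariant (for every finite $S'\subseteq S$ and finite $\Phi\subseteq\Gamma$ there is $D\in\Delta$ with $S'\subseteq D$ and $D$ meeting each member of $\Phi$), your amalgamation argument does correctly show that a maximal $R_0$ meets every $F\in\Gamma$. But this only gives that each finite $S'\subseteq R_0$ is \emph{contained in} some $D\in\Delta$, not that $S'\in\Delta$; the lemma does not assume $\Delta$ is $\subseteq$-downward closed, and \eqref{succClosed} is an \emph{upward} closure (adding $d$ with $d\succ D$), so it cannot be used to pass to subsets. If instead you build $\Delta$-FIP into the invariant, the ``some $f\in F$ can be added'' step acquires a second failure mode (some $S'_f\cup\{f\}\notin\Delta$) that the amalgamation of the $\Phi_f$ does not address. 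In the paper's applications $\Delta$ is in fact $\subseteq$-downward closed, so this particular gap is harmless there --- but the roundness gap remains.
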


\begin{proof}
First we prove some basic facts about $\Gamma$.  To start with, say we had $F\in\Gamma$ and $G'\subseteq G\in\Gamma$ such that
\[F\prec G\setminus G'.\]
We claim that we can remove $G'$ from $G$, i.e. replace $G$ with $G\setminus G'$, without destroying the $\Delta$-centred property of $\Gamma$.  To see this, take any finite $\Phi\subseteq\Gamma$.  As $\Gamma$ is $\Delta$-centred, we have some $D\in\Delta$ such that $D\cap F\neq\emptyset\neq D\cap H$, for all $H\in\Phi$.  As $F\prec G\setminus G'$, we have some $g\in G\setminus G'$ with $g\succ D\cap F$.  Thus $D\cup\{g\}\in\Delta$, by \eqref{succClosed}, and $(D\cup\{g\})\cap(G\setminus G')\supseteq\{g\}\neq\emptyset$ as well.  As $\Phi$ was arbitrary, this proves the claim.

Next, take disjoint subsets $F^+,F^-\subseteq F\in\Gamma$.  We claim that we can remove at least one of these subsets from $F$ again without destroying the $\Delta$-centred property of $\Gamma$.  If not then we could find finite $\Phi^+\subseteq\Gamma$ and $\Phi^-\subseteq\Gamma$ such that $\Phi^+\cup\{F\setminus F^+\}$ and $\Phi^-\cup\{F\setminus F^-\}$ are not $\Delta$-centred.   This means that, for any $D\in\Delta$ such that $D\cap G\neq\emptyset$, for all $G\in\Phi^+$, we have $D\cap F\subseteq F^+$.  Likewise, for any $D\in\Delta$ such that $D\cap G\neq\emptyset$, for all $G\in\Phi^-$, we have $D\cap F\subseteq F^-$.  Thus, for any $D\in\Delta$ such that $D\cap G\neq\emptyset$, for all $G\in\Phi^+\cup\Phi^-$, we have $D\cap F\subseteq F^+\cap F^-=\emptyset$, contradicting the $\Delta$-centred property of $\Gamma$.

Now take another $G\in\Gamma$ with $G\prec F$ and set
\[G^+=\{g\in G:g\not\prec F\setminus F^+\}\qquad\text{and}\qquad G^-=\{g\in G:g\not\prec F\setminus F^-\}.\]
As $G\prec F=(F\setminus F^+)\cup(F\setminus F^-)$, it follows that $G^+$ and $G^-$ are also disjoint.  Also note that $G^+$ and $G^-$ are the smallest subsets of $G$ such that
\[G\setminus G^+\prec F\setminus F^+\qquad\text{and}\qquad G\setminus G^-\prec F\setminus F^-.\]
Thus if we can remove $G_+$ from $G$ without destroying the $\Delta$-centred property of $\Gamma$, we can also remove $F_+$ from $F$, by what we proved above (and likewise for $G^-$).

With these basic properties out of the way, let us now index $\Gamma$ by some set $\Lambda$, i.e. let $\Gamma$ be $(F_\lambda)_{\lambda\in\Lambda}$.  As $\Gamma$ is $\prec$-round, we also have a function $\theta$ on $\Lambda$ such that
\[F_{\theta(\lambda)}\prec F_\lambda,\]
for all $\lambda\in\Lambda$.  Now consider the collection $\Theta_\Lambda$ of all $\Delta$-centred $\Lambda$-indexed families $(G_\lambda)_{\lambda\in\Lambda}$ of finite subsets of $P$ such that $G_{\theta(\lambda)}\prec G_\lambda$, for all $\lambda\in\Lambda$.  Order $\Theta_\Lambda$ by
\[(G_\lambda)_{\lambda\in\Lambda}\leq(H_\lambda)_{\lambda\in\Lambda}\qquad\Leftrightarrow\qquad\forall\lambda\in\Lambda\ (G_\lambda\subseteq H_\lambda).\]

We claim that any decreasing (potentially transfinite) sequence in $\Theta_\lambda$
\[(G^1_\lambda)_{\lambda\in\Lambda}\geq(G^2_\lambda)_{\lambda\in\Lambda}\geq\ldots\]
has a lower bound in $\Theta_\Lambda$ defined by $G_\lambda=\bigcap_\alpha G^\alpha_\lambda$.  To see this, note that for each $\lambda$, $G_\lambda$ and $G_{\theta(\lambda)}$ are finite so there must be some $\alpha$ with $G_\lambda=G^\alpha_\lambda$ and $G_{\theta(\lambda)}=G^\alpha_{\theta(\lambda)}$.  As $(G^\alpha_\lambda)_{\lambda\in\Lambda}$ is in $\Theta_\Lambda$, it follows that $G_{\theta(\lambda)}\prec G_\lambda$.  Likewise, for any finite $\Phi\subseteq\Lambda$, we must have $\alpha$ such that $G_\lambda=G^\alpha_\lambda$, for all $\lambda\in\Phi$.  As $(G^\alpha_\lambda)_{\lambda\in\Lambda}$ is $\Delta$-centred, it follows that we can find $G\in\Delta$ such that $G_\lambda\cap G\neq\emptyset$, for all $\lambda\in\Phi$.  This shows that $(G_\lambda)_{\lambda\in\Lambda}$ is also $\Delta$-centred and hence in $\Theta_\Lambda$.

Thus, by the Kuratowski-Zorn lemma, we have some minimal $(G_\lambda)_{\lambda\in\Lambda}$ below the given $(F_\lambda)_{\lambda\in\Lambda}$.  As $(G_\lambda)_{\lambda\in\Lambda}$ is $\Delta$-centred, each $G_\lambda$ is non-empty.  In fact, we claim that each $G_\lambda$ must be a singleton.  If not then some $G_\gamma$ has disjoint non-empty subsets $G^+_\gamma$ and $G^-_\gamma$.  Let $\theta^0(\gamma)=\gamma$ and $\theta^{n+1}(\gamma)=\theta(\theta^n(\gamma))$ and, for $*=+$ or $-$, recursively define
\[G^*_{\theta^{n+1}(\gamma)}=\{g\in G_{\theta^{n+1}(\gamma)}:g\not\prec G_{\theta^n(\gamma)}\setminus G^*_{\theta^n(\gamma)}\}.\]
As noted above, $G^+_{\theta^n(\gamma)}$ and $G^-_{\theta^n(\gamma)}$ are disjoint and
\begin{equation}\label{Removals}
(G_{\theta^{n+1}(\gamma)}\setminus G^*_{\theta^{n+1}(\gamma)})\prec(G_{\theta^n(\gamma)}\setminus G^*_{\theta^n(\gamma)}),
\end{equation}
so if we can remove $G^*_{\theta^{n+1}(\gamma)}$ from $G_{\theta^{n+1}(\gamma)}$ then we can also remove $G^*_{\theta^n(\gamma)}$ from $G_{\theta^n(\gamma)}$, without destroying the $\Delta$-centred property of $\Gamma$.  But for each $n$, we must be able to remove $G^*_{\theta^n(\gamma)}$ from $G_{\theta^n(\gamma)}$ for either $*=+$ or $-$.  Thus for either $*=+$ or $-$, we can in fact remove $G^*_{\theta^n(\gamma)}$ from $G_{\theta^n(\gamma)}$ for all $n$.  Then the resulting $(G'_\lambda)_{\lambda\in\Lambda}$ will not only remain $\Delta$-centred but also still satisfy $G'_{\theta(\lambda)}\prec G'_\theta$, by \eqref{Removals}.  But this means $(G'_\lambda)_{\lambda\in\Lambda}$ is in $\Theta_\Lambda$, even though it is strictly below $(G_\lambda)$, contradicting the minimality of $(G_\lambda)_{\lambda\in\Lambda}$.  So each $G_\lambda$ must have been a singleton, proving the claim.

Finally, let $R=\bigcup_\lambda G_\lambda$, which certainly satisfies \eqref{Selector}.  Also $R$ is round, as $(G_\lambda)_{\lambda\in\Lambda}$ consists of $\prec$-round singletons.  Moreover, $R$ satisfies \eqref{FIP}, as $(G_\lambda)_{\lambda\in\Lambda}$ consists of $\Delta$-centred singletons.
\end{proof}

\section{Pseudobases}\label{Pseudobases}

\subsection{The Shrinking Condition}

From now on we will focus on transitive relations $\prec$ that make $P$ round.  However, there is one more condition that we need to be able to define general locally compact Hausdorff spaces from $(P,\prec)$.  Specifically, we need a condition which represents the ability to `shrink' a cover to another cover, each element of which is compactly contained in some element of the original cover.

\begin{dfn}
$(P,\prec)$ is an \emph{abstract pseudobasis} if $P$ is round, $\prec$ is transitive and
\[\label{Shrinking}\tag{Shrinking}p\prec q\qquad\Rightarrow\qquad p\C q^\succ.\]
\end{dfn}

More explicitly, this condition is saying
\[\tag{Shrinking}p\prec q\qquad\Rightarrow\qquad\exists\text{ finite }F\ (p\C F\prec q).\]
This makes it clear that \eqref{Shrinking} is a kind of weak interpolation condition.

\begin{rmk}\label{InterpolationRemark}
By \eqref{FprecQ}, \eqref{Shrinking} weakens the more standard interpolation condition considered in domain theory (see \cite[Definition I-1.17]{GierzHofmannKeimelLawsonMisloveScott2003}) given by
\[\label{Interpolation}\tag{Interpolation}p\prec q\qquad\Rightarrow\qquad\exists r\ (p\prec r\prec q).\]
In \cite[Definition 2.1 (v)]{Exel2012} and \cite[Definition 1.2 (v)]{Exel2012b}, \eqref{Interpolation} is also considered as a kind of regularity/normality condition.  In particular, any poset becomes an abstract pseudobasis when we take $\prec\ =\ \leq$, as reflexive relations are trivially round and satisfy \eqref{Interpolation}.

However, it would seem that \eqref{Shrinking} is more appropriate for our work than \eqref{Interpolation}, as \eqref{Shrinking} applies to arbitrary (pseudo)bases of locally compact Hausdorff spaces, even when they are not closed under taking finite unions.  The extra freedom coming from \eqref{Shrinking} will also be convenient for constructing specific examples.  In any case, \eqref{Shrinking} suffices to guarantee that we have a large supply of tight subsets, as we will soon see.
\end{rmk}

First we note \eqref{Shrinking} extends to $\C$, allowing us to replace $\D$ with $\C$ in \eqref{DC=>C}.

\begin{prp}
If $(P,\prec)$ is an abstract pseudobasis then
\begin{equation}\label{RCQ}\tag{$\mathsf{C}$-Shrinking}
R\C Q\qquad\Rightarrow\qquad R\C Q^\succ.
\end{equation}
\end{prp}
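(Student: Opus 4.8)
The plan is to unpack the definition of $\C$, apply \eqref{Shrinking} to each member of the resulting finite set, and then recompose using the composition laws for $\D$ and $\C$ already in hand. The point is that \eqref{Shrinking} reads exactly ``$p \prec q \Rightarrow p \C q^\succ$'', so it is tailor-made to push a finite ``$\prec Q$'' cover down to a finite ``$\C Q^\succ$'' cover.

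Concretely: from $R \C Q$, \autoref{Covers} supplies a finite $F$ with $R \D F \prec Q$. First, for each $f \in F$ I would choose $q_f \in Q$ with $f \prec q_f$; then \eqref{Shrinking} gives $\{f\} \C q_f^\succ$, and since $q_f^\succ \subseteq Q^\succ$ the auxiliarity property $Q \subseteq Q' \C R' \subseteq R \Rightarrow Q \C R$ upgrades this to $\{f\} \C Q^\succ$. Next, iterating \eqref{Ccup} over the finite set $F$ yields $F = \bigcup_{f \in F}\{f\} \C Q^\succ$. Finally, combining $R \D F$ with $F \C Q^\succ$ through the implication $Q \D S \C R \Rightarrow Q \C R$ from \eqref{DC=>C} gives $R \C Q^\succ$, as required.

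I do not expect a real obstacle here; the only care needed is bookkeeping. One must keep the direction of the auxiliarity inclusions straight (the factor on the right of $\C$ is enlarged, the one on the left shrunk), and the iteration of \eqref{Ccup} tacitly assumes $F \neq \emptyset$, so the case $F = \emptyset$ should be treated separately: then $R \D \emptyset$, hence $R = \emptyset$ by \eqref{DMin} (which applies since $P$ is round), and $\emptyset \C Q^\succ$ holds trivially.
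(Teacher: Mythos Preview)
Your proof is correct and follows essentially the same route as the paper's: the paper compresses your steps 2--3 into the single observation that \eqref{Shrinking} together with \eqref{Ccup} gives $F\prec Q\Rightarrow F\C Q^\succ$ for finite $F$, and then applies \eqref{DC=>C} exactly as you do. Your explicit treatment of the case $F=\emptyset$ is harmless extra care; the paper omits it since $\emptyset\C Q^\succ$ holds trivially.
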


\begin{proof}
First note that, for all finite $F\subseteq P$, \eqref{Shrinking} and \eqref{Ccup} yields
\[F\prec Q\qquad\Rightarrow\qquad F\C Q^\succ.\]
Thus if $R\C Q$, i.e. $R\D F\prec Q$, for some finite $F\subseteq P$, then $R\D F\C Q^\succ$ and hence $R\C Q^\succ$, by \eqref{DC=>C}.
\end{proof}

\subsection{Tight Subsets}

\begin{center}
\textbf{From now on we assume $(P,\prec)$ is an abstract pseudobasis.}
\end{center}

Again recall that we are thinking of elements of $P$ as open subsets of a topological space.  It should then be possible to recover the points of the space from their open neighbourhoods.  The key thing to note here is that no neighborhood of a point $x$ can be covered by subsets which do not contain $x$.  Moreover, in locally compact spaces every neighborhood of $x$ compactly contains a smaller neighbourhood of $x$.  Thus the neighbourhoods of $x$ in $P$ are `tight' in the following sense.

\begin{dfn}\label{TightFilter}
We call $T\subseteq P$ \emph{tight} if $T\succ T\not\CC P\setminus T$.
\end{dfn}

So $T$ is tight iff $T$ is round and, for all finite $F,G\subseteq P$,
\begin{equation}\label{Tight-Round}
F\subseteq T\quad\text{and}\quad G\prec P\setminus T\qquad\Rightarrow\qquad\widehat{F}\cap G^\perp\neq\emptyset
\end{equation}
(because $\widehat{F}\cap G^\perp=\emptyset$ would imply $\widehat{F}\D G\prec P\setminus T$ and hence $\widehat{F}\C P\setminus T$).  

\begin{prp}\label{Tight=>Frink}
If $T\subseteq P$ is tight then $T$ is a Frink filter.
\end{prp}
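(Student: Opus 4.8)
The plan is to verify both halves of the Frink filter condition \eqref{FrinkFilter} for $U=T$. The forward implication is essentially free: a tight $T$ is round (the clause $T\succ T$ in \autoref{TightFilter}), so given $u\in T$ we may pick $w\in T$ with $w\prec u$, and then $\widehat{\{w\}}=w^\succ\prec w$ exhibits $\{w\}$ as a finite subset of $T$ witnessing $T\mathrel{\widehat{\prec}}w$. This is exactly the observation already recorded in the discussion following the definition of \eqref{FrinkFilter}.

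For the reverse implication I would argue by contradiction. Suppose $w\prec u$ and $\widehat{F}\prec w$ for some finite $F\subseteq T$, but $u\notin T$, i.e.\ $u\in P\setminus T$. The aim is to derive $\widehat{F}\C P\setminus T$, contradicting $T\not\CC P\setminus T$. The one step with real content is upgrading $\widehat{F}\prec w$ to $\widehat{F}\D w$. Reading $\widehat{F}\prec w$ as $\widehat{F}\subseteq w^\succ$ and combining with $w^\succ\D w$ via \eqref{Daux}, it suffices to note $w^\succ\D w$, which by \eqref{Ddef2} amounts to $w^{\succ\succ}\subseteq w^{\succ\prec}$; this in turn follows from transitivity ($w^{\succ\succ}\subseteq w^\succ$) together with the roundness inclusion $w^\succ\subseteq w^{\succ\prec}$ (given $p\prec w$, roundness supplies $q\prec p$, and transitivity then gives $q\prec w$). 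Granting $\widehat{F}\D w$, and noting that $w\prec u\in P\setminus T$ gives $w\prec P\setminus T$, the very definition of $\C$ (with $\{w\}$ as the intermediate finite set) produces $\widehat{F}\D w\prec P\setminus T$, i.e.\ $\widehat{F}\C P\setminus T$. Since $F\subseteq T$ is finite, this says $T\CC P\setminus T$, the desired contradiction, so $u\in T$.

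The argument is short, and I do not expect a serious obstacle. The only friction is bookkeeping with the asymmetric conventions ($Q\prec R$ means $Q\subseteq R^\succ$, the direction of $\succ$, and that $\mathrel{\widehat{\prec}}$ quantifies over finite subsets), together with isolating the auxiliary fact $w^\succ\D w$ — equivalently, that $\prec$ implies $\D$ between subsets of a round, transitive $P$. It is worth noting that \eqref{Shrinking} plays no role in this particular proof: only roundness and transitivity of $\prec$ are used.
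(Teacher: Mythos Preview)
Your argument is correct and follows essentially the same route as the paper's proof: the forward direction uses roundness, and the backward direction upgrades $\widehat{F}\prec w$ to $\widehat{F}\D w$ and then uses $w\prec u\in P\setminus T$ to obtain $\widehat{F}\C P\setminus T$, contradicting tightness. The only differences are cosmetic\textemdash the paper writes it as a direct implication rather than a contradiction and compresses the step $\widehat{F}\prec w\Rightarrow\widehat{F}\D w$ into a single line\textemdash and your observation that \eqref{Shrinking} is not needed here is accurate.
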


\begin{proof}
The $\Rightarrow$ part of \eqref{FrinkFilter} is immediate because $T$ is round.  Conversely, if $T\mathrel{\widehat{\prec}}v\prec u$ then $T\mathrel{\widehat{\D}}v\prec u$ so $T\mathrel{\widehat{\C}}u$.  Thus $u\in T$, as $T\not\mathrel{\widehat{\C}}P\setminus T$.
\end{proof}

In particular, any tight $T$ is upwards closed and thus $T=T^\prec$, as $T$ is also round.

\begin{rmk}
If $(P,\prec)=(S\setminus\{0\},\leq)$, for some $\wedge$-semilattice $S$ with minimum $0$, then every tight subset is a filter, by \autoref{Frink=>Filter} and \autoref{Tight=>Frink}.  In this case, our tight subsets are precisely the tight filters defined in \cite{ExelPardo2016}.  This becomes clear from the following general characterisation of tight subsets, as \eqref{Tight'} below extends \cite[(2.10)]{ExelPardo2016}.
\end{rmk}

\begin{prp}\label{tightChars}
If $T\neq P$ then $T$ is tight iff either of the following holds.
\begin{align}
\label{Tight}\tag{Tight}t\in T\quad&\Leftrightarrow\quad\exists\text{ finite }F\subseteq T\ \exists\text{ finite }G\prec P\setminus T\ (\widehat{F}\cap G^\perp\C t).\\
\label{Tight'}\tag{Tight$'$}\emptyset\neq H\cap T\quad&\Leftrightarrow\quad\exists\text{ finite }F\subseteq T\ \exists\text{ finite }G\prec P\setminus T\ (\widehat{F}\cap G^\perp\C H).
\end{align}
\end{prp}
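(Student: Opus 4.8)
The plan is to prove the cycle of implications $T$ tight $\Rightarrow$ \eqref{Tight'} $\Rightarrow$ \eqref{Tight} $\Rightarrow$ $T$ tight. The middle implication costs nothing: putting $H=\{t\}$ in \eqref{Tight'} is literally \eqref{Tight}, since $\emptyset\neq\{t\}\cap T$ means $t\in T$. For the other two I would split each of \eqref{Tight}, \eqref{Tight'} into its `$\Leftarrow$' half (a compact-cover witness forces membership) and its `$\Rightarrow$' half (elements of $T$ admit such witnesses), since these correspond to the two ingredients of \autoref{TightFilter}: $T\not\CC P\setminus T$ and `$T$ round'. The first thing to prove is that, for $T\neq P$, either `$\Leftarrow$' half is equivalent to $T\not\CC P\setminus T$. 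If $T\not\CC P\setminus T$ and $\widehat F\cap G^\perp\C H$ with finite $F\subseteq T$, $G\prec P\setminus T$ but $H\cap T=\emptyset$, then $H\subseteq P\setminus T$ gives $\widehat F\cap G^\perp\C P\setminus T$, say $\widehat F\cap G^\perp\D K\prec P\setminus T$ with $K$ finite; as each $f^\succ$ and each $g^\perp$, hence $\widehat F\cap G^\perp$, is $\prec$-downward closed, roundness of $P$ turns this into $\widehat F\cap G^\perp\cap K^\perp=\emptyset$, i.e.\ $\widehat F\cap(G\cup K)^\perp=\emptyset$, i.e.\ $\widehat F\D(G\cup K)\prec P\setminus T$; so $\widehat F\C P\setminus T$ and $T\CC P\setminus T$, a contradiction. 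Conversely, if $T\CC P\setminus T$ then $\widehat F\C P\setminus T$, so $\widehat F\cap K^\perp=\emptyset$ for some finite $F\subseteq T$ and $K\prec P\setminus T$; since $\emptyset$ is $\C$-least, the `$\Leftarrow$' half (with this $F$, with $G=K$, and with each singleton $H$) would force $T=P$.

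For the `$\Rightarrow$' halves, assume $T$ is tight. Given $t\in T$, roundness of $T$ gives $s\in T$ with $s\prec t$, and \eqref{Shrinking} gives a finite $K$ with $\{s\}\D K\prec t$; since $(s^\succ)^\succ\subseteq s^\succ$ this upgrades to $s^\succ=\widehat{\{s\}}\D K\prec t$, i.e.\ $\widehat{\{s\}}\cap\emptyset^\perp\C\{t\}$ with $\{s\}\subseteq T$ and $\emptyset\prec P\setminus T$ — the `$\Rightarrow$' half of \eqref{Tight}; enlarging $\{t\}$ to any $H\ni t$ gives that of \eqref{Tight'}. Together with the equivalence above this yields $T$ tight $\Rightarrow$ \eqref{Tight'} $\Rightarrow$ \eqref{Tight}.

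Finally, suppose \eqref{Tight} holds and $T\neq P$. The `$\Leftarrow$' half gives $T\not\CC P\setminus T$, and from this no finite subset of $T$ has empty formal meet, so $T$ is centred; moreover \eqref{Tight} forces $T$ to be upward closed (apply the `$\Rightarrow$' half, then $\C$-auxiliarity, then the `$\Leftarrow$' half). So only roundness of $T$ remains, and this is the hard part. Given $t\in T$, the `$\Rightarrow$' half yields finite $F\subseteq T$, $G\prec P\setminus T$ with $\widehat F\cap G^\perp\C t$, unpacked as $\widehat F\cap G^\perp\D K\prec t$ with $K$ finite; as before $\widehat F\cap G^\perp\neq\emptyset$ and $\widehat F\D(G\cup K)$, so $T\DC(G\cup K)$, whence \autoref{CentredDense} produces $m\in G\cup K$ with $T\cup\{m\}\not\CC P\setminus T$. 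If $m\in G$ then $m\prec u$ for some $u\notin T$, so $\{m\}\prec P\setminus T$ and (using \eqref{FprecQ} and the downward-closedness collapse) $\widehat{\{m\}}=m^\succ\C P\setminus T$, contradicting $T\cup\{m\}\not\CC P\setminus T$; hence $m\in K$ and $m\prec t$. The remaining, most delicate, step is to see that $m\in T$, so that $s=m$ witnesses roundness of $T$ at $t$: I expect this is exactly where the `$\Leftarrow$' half of \eqref{Tight} must be combined carefully with the centred-subset bookkeeping of \autoref{Preliminaries} (cf.\ \autoref{Tight=>Frink}), and it is the main obstacle in the whole argument.
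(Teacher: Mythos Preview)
Your cycle tight $\Rightarrow$ \eqref{Tight'} $\Rightarrow$ \eqref{Tight} $\Rightarrow$ tight is the same as the paper's, and everything up to roundness is correct. The gap you flag at the end is real, and the route via \autoref{CentredDense} does not close it: knowing only $m\prec t$ and $T\cup\{m\}\not\CC P\setminus T$ gives no way to manufacture finite $F'\subseteq T$ and $G'\prec P\setminus T$ with $\widehat{F'}\cap G'^\perp\C m$, which is what the $\Leftarrow$ half of \eqref{Tight} would require to force $m\in T$. The statement $T\cup\{m\}\not\CC P\setminus T$ is purely negative and does not produce such a witness; if $m\notin T$ there is simply nothing to contradict.

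The paper does not use \autoref{CentredDense} here at all. The missing idea is to apply \eqref{RCQ} to $\widehat F\cap G^\perp\C t$ \emph{before} unpacking, obtaining $\widehat F\cap G^\perp\C t^\succ$ and hence finite $I,H$ with $\widehat F\cap G^\perp\D I\prec H\prec t$. The extra interpolant $H$ is the whole point: now $\widehat F\cap(G\cup I)^\perp=\emptyset\C p$ for every $p\in P$, so if $H\subseteq P\setminus T$ then $G\cup I\prec P\setminus T$ and the $\Leftarrow$ half of \eqref{Tight} forces $T=P$, contradicting the hypothesis. Hence $H\cap T\neq\emptyset$, and any element of $H\cap T$ lies strictly below $t$, giving roundness directly. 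Your argument stalls precisely because with only the single layer $K\prec t$ you cannot convert $K\subseteq P\setminus T$ into $K\prec P\setminus T$: the only $\prec$-successor you know for the elements of $K$ is $t$ itself, and $t\in T$.
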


\begin{proof}
If $T$ is round then, for any $t\in T$, we have $s\in T$ with $s\prec t$ and hence $\widehat{s}\cap\emptyset^\perp\C t$.  If $T\not\CC P\setminus T$ and we have finite $F\subseteq T$ and $G\prec P\setminus T$ with $\widehat{F}\cap G^\perp\C H$ then we have finite $I$ with $\widehat{F}\cap G^\perp\D I\prec H$ and hence $\widehat{F}\D G\cup I$.  If we had $H\cap T=\emptyset$ then this would imply $\widehat{F}\D G\cup I\prec P\setminus T$ and hence $T\CC P\setminus T$, a contradiction, so $H\cap T\neq\emptyset$.  Thus if $T$ is tight then $T$ satisfies \eqref{Tight'}.  Also \eqref{Tight'} implies \eqref{Tight}, as the latter is just the former restricted to singleton $H$.

Now say $T$ satisfies \eqref{Tight}.  For any $t\in T$, we have finite $F\subseteq T$ and $G\prec P\setminus T$ with $\widehat{F}\cap G^\perp\C t$.  By \eqref{RCQ}, $\widehat{F}\cap G^\perp\C t^\succ$ so we have finite $H,I\subseteq P$ with $\widehat{F}\cap G^\perp\D I\prec H\prec t$ and hence $\widehat{F}\cap(G\cup I)^\perp=\emptyset\C p$, for all $p\in P$.  Then $H\subseteq P\setminus T$ would imply $G\cup I\prec P\setminus T$ and hence $T=P$, by \eqref{Tight}, contradicting our assumption.  Thus $H\cap T\neq\emptyset$, which shows that $T$ is round.

Still assuming $T$ satisfies \eqref{Tight}, say we had $T\CC P\setminus T$, which means we have finite $F\subseteq T$ and $G\prec P\setminus T$ with $\widehat{F}\D G$.  Then $\widehat{F}\cap G^\perp=\emptyset\C p$, for all $p\in P$, so $T=P$, by \eqref{Tight}, again contradicting our assumption.  Thus $T\not\CC P\setminus T$, showing that \eqref{Tight} does indeed imply that $T$ is tight.
\end{proof}

Incidentally, $P$ itself will only be tight in trivial cases.  At the other extreme, it is more common for $\emptyset$ to be tight, although we will specifically omit $\emptyset$ from the tight spectrum considered in the next section (in order to be able to obtain locally compact rather than just compact spaces).

\begin{prp}\label{S0tight}\
\begin{enumerate}
\item\label{Ptight} $P$ is tight iff $P$ is centred, in which case $P$ is the only tight subset of $P$.
\item\label{0tight} $\emptyset$ is tight iff $P\not\C P$.
\end{enumerate}
\end{prp}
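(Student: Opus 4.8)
### Proof plan for Proposition \ref{S0tight}

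The plan is to unwind the definition of tightness (Definition \ref{TightFilter}), which says $T$ is tight iff $T \succ T$ (i.e. $T$ is round) and $T \not\CC P \setminus T$, in the two extreme cases $T = P$ and $T = \emptyset$. The key observation is that in both cases the roundness requirement is automatic: $P$ is round by our standing assumption that $(P,\prec)$ is an abstract pseudobasis, and $\emptyset$ is trivially round. So for both parts the content is entirely in the non-covering condition $T \not\CC P \setminus T$, which we read through the identity $\widehat{\emptyset} = P$ and through \eqref{CMin}.

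For part \eqref{Ptight}: here $P \setminus T = \emptyset$, so $P \CC \emptyset$ means there is a finite $F \subseteq P$ with $\widehat{F} \C \emptyset$. By \eqref{CMin} (valid since $P$ is round), $\widehat{F} \C \emptyset$ forces $\widehat{F} = \emptyset$, i.e. $F$ witnesses that $P$ is \emph{not} centred. Conversely, if $P$ is not centred, some finite $F \subseteq P$ has $\widehat{F} = \emptyset \C \emptyset$, so $P \CC \emptyset$. Hence $P \not\CC \emptyset$ iff $P$ is centred, which together with the automatic roundness of $P$ gives: $P$ is tight iff $P$ is centred. For the uniqueness clause, suppose $P$ is centred and $T$ is any tight subset; I would argue that if $T \neq P$ then picking $u \in P \setminus T$ we would want to derive a contradiction with tightness. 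Using that $\{u\} \prec P \setminus T$ (since $u \in P\setminus T$ and $u \succ$ something below it by roundness — more precisely $u \in (P\setminus T)^\succ$ needs $u$ above some element of $P\setminus T$, which holds as $u \prec u$ is false, so instead I use: $u^\succ \ni u'$ for $u' \prec u$, but $u'$ need not be outside $T$) — this is the delicate point. The cleaner route: since $T \neq P$, if additionally $T$ were tight then by \autoref{Tight=>Frink} $T$ is a Frink filter hence upwards closed and a proper one; but with $P$ centred, $P$ itself is a centred Frink filter and the discussion preceding \autoref{Frink=>Filter} notes $P$ is then the only candidate that could fail to be proper — I would instead directly show any tight $T$ with $P$ centred must equal $P$ by observing that centredness of $P$ means $\widehat{F} \neq \emptyset$ for every finite $F$, so the condition \eqref{Tight} ``$t \in T$ iff $\exists$ finite $F \subseteq T$, finite $G \prec P\setminus T$ with $\widehat F \cap G^\perp \C t$'' combined with $\emptyset = \widehat{F'} \C t$ being unavailable forces, via the argument in the proof of \autoref{tightChars}, that $P \setminus T$ cannot be nonempty without collapsing $T$ to $P$. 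I would cite the relevant line of that proof rather than reprove it.

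For part \eqref{0tight}: here $T = \emptyset$, so $P \setminus T = P$, and $\emptyset$ is tight iff $\emptyset$ is round (automatic) and $\emptyset \not\CC P$. Now $\emptyset \CC P$ means there is a finite $F \subseteq \emptyset$ with $\widehat{F} \C P$; the only such $F$ is $F = \emptyset$, giving $\widehat{\emptyset} = P \C P$. So $\emptyset \CC P$ iff $P \C P$, and therefore $\emptyset$ is tight iff $P \not\C P$. This direction is a short direct computation.

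The main obstacle is the uniqueness clause in \eqref{Ptight}: showing that when $P$ is centred, no proper subset can be tight. The cleanest argument I anticipate is to invoke \autoref{tightChars}: a proper tight $T$ satisfies \eqref{Tight}, and then for any $t \in T$ (and $T$ is nonempty since a tight set is round and... actually a tight set could a priori be empty, but if $T = \emptyset \neq P$ then $\emptyset$ tight requires $P \not\C P$, which is compatible — so I must handle $T = \emptyset$ separately, noting that if $P$ is centred then in particular $P \C P$ is \emph{not} forced, hmm). I will therefore argue: if $P$ is centred then $P \CC P\setminus\emptyset$... let me reconsider — the real point is that centredness of $P$ makes $\emptyset$ the \emph{only} subset on which no finite formal meet vanishes, so for any $T \neq P$ one exhibits the collapse using \eqref{Tight}; I expect to spend most of the writing care precisely pinning down this step, reusing the contradiction already derived inside the proof of \autoref{tightChars}.
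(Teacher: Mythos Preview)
Your treatment of part \eqref{0tight} and of the equivalence in part \eqref{Ptight} is correct and essentially identical to the paper's: both unwind tightness to $P\not\CC\emptyset$ (respectively $\emptyset\not\CC P$) and read off the result via \eqref{CMin} and $\widehat{\emptyset}=P$.

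The uniqueness clause in part \eqref{Ptight}, however, has a genuine gap. You try several routes---showing $\{u\}\prec P\setminus T$ for $u\notin T$, invoking Frink filter properties, or appealing to \autoref{tightChars}---and correctly notice that each runs into trouble, but you never land on a working argument. The difficulty is that you are looking for a witness inside $T$ (or inside $P\setminus T$) when the right witness is the \emph{empty} subset of $T$.

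The paper's argument is a one-liner you are missing: if $P$ is centred then, for any $p\in P$ and any $q\prec P$, the set $\widehat{\{p,q\}}=p^\succ\cap q^\succ$ is non-empty, which says exactly that $P\D p$; taking $p'\prec p$ (roundness) gives $P\D p'\prec p$ and hence $P\C p$. Now for any $T\neq P$, choose $p\in P\setminus T$ and take $F=\emptyset\subseteq T$: then $\widehat{F}=P\C\{p\}\subseteq P\setminus T$, so $T\CC P\setminus T$ and $T$ is not tight. The key point you overlooked is that $\CC$ allows the finite subset $F\subseteq T$ to be empty, whence $\widehat{F}=P$, so centredness of $P$ alone---with no information about $T$ beyond $T\neq P$---already forces $T\CC P\setminus T$.
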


\begin{proof}\
\begin{itemize}
\item[\eqref{Ptight}]  By definition, $P$ is tight iff $P\not\CC\emptyset$.  This means $\widehat{F}\not\C\emptyset$, for all finite $F\subseteq P$, which is just saying that $P$ is centred.  In particular, this implies $P\D p$ and hence $P\C p$, for any $p\in P$.  Thus $T\CC P\setminus T$ whenever $T\neq P$ so there can be no other tight subsets.

\item[\eqref{0tight}]  By definition, $\emptyset$ is tight iff $\emptyset\not\CC P$, which means $\widehat{\emptyset}=P\not\C P$. \qedhere
\end{itemize}
\end{proof}

Thus, instead of $T\neq P$, we could assume $T$ is centred in \autoref{tightChars}, i.e.
\[T\text{ is tight}\qquad\Leftrightarrow\qquad T\text{ is centred and satisfies \eqref{Tight}}.\]

While round centred subsets need not be tight (or even upwards closed), they always have tight extensions.  Indeed, by the Kuratowski-Zorn Lemma, any such subset is contained in one that is maximal among all round centred subsets.  By the following result, these subsets will necessarily be tight and, in particular, Frink filters.  Thus this result generalizes \cite[Proposition 12.7]{Exel2008}, which says that ultrafilters in $\wedge$-semilattices are necessarily tight.

\begin{prp}\label{maximalroundcentredtight}
If $U$ is round and centred then
\begin{align}
\label{UltraExists}U\text{ is maximal}\qquad&\Leftrightarrow\hspace{53pt}\forall a\prec P\setminus U\ (U\mathrel{\widehat{\perp}}a)\\
\label{UltraForall}&\Leftrightarrow\qquad\forall\text{ finite }G\prec P\setminus U\ (U\mathrel{\widehat{\perp}}G)\\
\label{TightForall}\Rightarrow\quad U\text{ is tight}\qquad&\Leftrightarrow\qquad\forall\text{ finite }G\prec P\setminus U\ (U\not\DC G).
\end{align}
\end{prp}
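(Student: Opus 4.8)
My plan is to proceed in four parts: the two ``bookkeeping'' equivalences $U$ tight $\Leftrightarrow$ \eqref{TightForall} and \eqref{UltraExists} $\Leftrightarrow$ \eqref{UltraForall}; then \eqref{UltraForall} $\Rightarrow$ ($U$ tight and $U$ maximal); and finally the converse $U$ maximal $\Rightarrow$ \eqref{UltraExists}, which is the substantial direction. I would first record that, since $P$ is round, $U\not\mathrel{\widehat{\perp}}a$ holds iff $U\cup\{a\}$ is centred: the only nontrivial point is that $\widehat{F}\cap a^\succ\neq\emptyset$ forces $\widehat{F}^\succ\cap a^\succ\neq\emptyset$, which one sees by picking $x\in\widehat{F}\cap a^\succ$ and $x'\prec x$, so that $x'\in\widehat{F}^\succ$ (witnessed by $x$) and $x'\in a^\succ$ by transitivity. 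For the first equivalence, $U$ is tight iff $U$ is round and $U\not\CC P\setminus U$, and since $Q\CC R\Leftrightarrow\exists\text{ finite }F\,(Q\DC F\prec R)$, the negation of $U\CC P\setminus U$ is exactly \eqref{TightForall}. For \eqref{UltraExists} $\Leftrightarrow$ \eqref{UltraForall}, the backward direction is the singleton instance $G=\{a\}$, while for the forward one, given finite $G\prec P\setminus U$ (the empty case being trivial) I apply \eqref{UltraExists} to each $a\in G$ to get finite $F_a\subseteq U$ with $\widehat{F_a}\perp a$ and set $F=\bigcup_{a\in G}F_a$; then $\widehat{F}\subseteq\widehat{F_a}$ gives $\widehat{F}^\succ\cap a^\succ\subseteq\widehat{F_a}^\succ\cap a^\succ=\emptyset$ for each $a$, hence $\widehat{F}^\succ\cap G^\succ=\emptyset$, i.e. $U\mathrel{\widehat{\perp}}G$.

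For \eqref{UltraForall} $\Rightarrow$ $U$ tight, by the first equivalence it is enough to exclude $U\DC G$ for finite $G\prec P\setminus U$. If $\widehat{F}\D G$ with finite $F\subseteq U$, I take also finite $F'\subseteq U$ with $\widehat{F'}\perp G$ from \eqref{UltraForall} and put $M=\widehat{F\cup F'}$, which is nonempty as $U$ is centred; then $M\D G$ by \eqref{Daux} and $M\perp G$ since $M\subseteq\widehat{F'}$. Choosing $m\in M$, then $q\prec m$ (possible as $P$ is round), and then $r\prec q$ with $r\prec G$ (as $q\in M^\succ$ and $M\D G$), transitivity gives $r\in M^\succ\cap G^\succ$, contradicting $M\perp G$. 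For \eqref{UltraForall} $\Rightarrow$ $U$ maximal, suppose $V\supseteq U$ is round and centred with some $v\in V\setminus U$; roundness of $V$ gives $v_1\in V$ with $v_1\prec v$, whence $\{v_1\}\prec\{v\}\subseteq P\setminus U$, and \eqref{UltraForall} supplies finite $F\subseteq U$ with $\widehat{F}\perp v_1$. But $F\cup\{v_1\}\subseteq V$ is finite and $V$ is centred, so $\widehat{F\cup\{v_1\}}\neq\emptyset$; picking $p$ there and $p'\prec p$ (again as $P$ is round) puts $p'\in\widehat{F}^\succ\cap v_1^\succ$, a contradiction. Hence $V=U$.

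It remains to show $U$ maximal $\Rightarrow$ \eqref{UltraExists}, the crux. First I would note that a maximal round centred $U$ is upward closed: if $u\in U$ and $u\prec v$ then $U\cup\{v\}$ is round (via $u\prec v$) and centred (each $\widehat{E\cup\{v\}}$ with $E\subseteq U$ finite contains $\widehat{E\cup\{u\}}\neq\emptyset$, as $x\prec u\prec v$ for $x\in\widehat{E\cup\{u\}}$), so $v\in U$ by maximality; consequently $a\prec r\in P\setminus U$ forces $r^\succ\cap U=\emptyset$, so $a\notin U$ and in fact $a^\succ\cap U=\emptyset$. Now assume $a\prec r\in P\setminus U$ and, for contradiction, $U\not\mathrel{\widehat{\perp}}a$, i.e. $U\cup\{a\}$ is centred; the aim is to build a round centred $V$ with $U\subsetneq V$. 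Applying \eqref{Shrinking} to $a\prec r$ yields finite $F$ with $a\D F\prec r^\succ$; then $F\subseteq r^\succ$ (so $F\cap U=\emptyset$) and, since $\widehat{\{a\}}=a^\succ$ and $a\D F$ gives $a^\succ\D F$, we have $(U\cup\{a\})\DC F$. One then wants to promote this to an actual round centred extension; the natural route is \autoref{SelectionPrinciple} with $\Delta$ the centred finite subsets of $P$ (which is $\succ$-closed, using roundness of $P$) and $\Gamma$ comprising the singletons $\{u\}$, $u\in U$ — a $\prec$-round subfamily whose selector forces $U\subseteq V$ — together with a suitable $\prec$-round family of finite subsets of $r^\succ$, built by iterating \eqref{Shrinking} and trimming via \autoref{CentredDenseCor}, one member of which is $F$; the output round set $V$ then meets each $\{u\}$ and $F$, so $U\subsetneq V$, and satisfies the $\Delta$-FIP, so is centred, giving the contradiction.

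I expect the main obstacle to be the construction of this $\Gamma$: one must arrange that its ``escaping'' part is genuinely $\prec$-round — that is, package the shrinking condition so that each newly selected element of $r^\succ$ lies $\prec$-below one already selected — while keeping $\Gamma$ as a whole $\Delta$-centred, for which one uses that $U\cup\{a\}$ is centred and that $a\D F$ with \autoref{CentredDenseCor} lets one always locate, for a given finite subfamily, a centred finite set meeting all of it. This is precisely where \eqref{Shrinking}, rather than merely $a^\succ\neq\emptyset$, does the essential work, consistently with the earlier remark that \eqref{Shrinking} guarantees a large supply of tight subsets.
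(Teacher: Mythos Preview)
Your argument is correct and follows the same overall strategy as the paper. The bookkeeping parts --- the equivalence of tightness with \eqref{TightForall}, the equivalence \eqref{UltraExists}$\Leftrightarrow$\eqref{UltraForall}, and \eqref{UltraForall}$\Rightarrow$tight --- match the paper's proof essentially verbatim (the paper's version of the last is one line: from $\widehat F\perp G$ and $U$ centred one gets $\emptyset\neq\widehat{F\cup F'}\perp G$, hence $\widehat{F'}\not\D G$). For the substantial direction (maximal $\Rightarrow$ \eqref{UltraExists}), both you and the paper assume $U\cup\{a\}$ is centred for some $a\prec P\setminus U$, build a $\prec$-decreasing sequence $(F_n)$ of finite sets below $a$ by iterating \eqref{Shrinking}/\eqref{RCQ}, and then invoke \autoref{SelectionPrinciple} with \autoref{CentredDenseCor} supplying the $\Delta$-centred hypothesis.

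The one genuine difference is how $U$ is fed into \autoref{SelectionPrinciple}. The paper absorbs $U$ into $\Delta$, taking
\[\Delta=\{D\subseteq P:D\text{ finite and }U\cup D\text{ centred}\}\qquad\text{and}\qquad\Gamma=(F_n);\]
the output round $T$ then has $U\cup T$ centred, and the proper extension is $U\cup T^\prec$, which contains some $b\in P\setminus U$ because $T$ meets $F_n\prec P\setminus U$. You instead take $\Delta$ to be all centred finite sets and enlarge $\Gamma$ by the singletons $\{u\}$ for $u\in U$, so that the selector forces $U\subseteq V$ directly. Both work; the paper's packaging is leaner (a single $\prec$-chain in $\Gamma$, and $\Delta$-centredness reduces to one use of \autoref{CentredDenseCor} at the deepest $F_n$), while your route delivers $V\supseteq U$ immediately but requires the extra step of proving $U$ upward closed so that $F_1\subseteq r^\succ\subseteq P\setminus U$ and hence $V\supsetneq U$. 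The ``trimming via \autoref{CentredDenseCor}'' you allude to is not actually needed for the Selection-Principle route --- that is precisely the alternative K\"onig's-lemma shortcut the paper mentions parenthetically (replacing each $F_n$ by $\{f\in F_n:U\cup\{f\}\text{ centred}\}$ and taking an infinite branch).
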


\begin{proof}\
\begin{itemize}
\item[\eqref{UltraExists}]  Any round extension of $U$ must contain some $a\prec P\setminus U$.  So if the right side of \eqref{UltraExists} holds then such an extension could not be centred, which implies that $U$ must be maximal.  Conversely, say the right side of \eqref{UltraExists} fails, so we have some $a\prec P\setminus U$ with $\widehat{F}\not\perp a$, for all finite $F\subseteq U$, i.e. $U\cup\{a\}$ is centred.  By \eqref{Shrinking}, we have a sequence $(F_n)$ of finite subsets of $P$ such that, for all $n$,
\[a\C F_{n+1}\prec F_n\prec P\setminus U.\]
In particular, for all $n$, $a\D F_n$ which implies that $U\cup\{f\}$ is centred, for some $f\in F_n$, by \autoref{CentredDenseCor} (with $Q=U\cup\{a\}$ and $F=F_n$).  Thus we can apply \autoref{SelectionPrinciple}, taking $(F_n)$ for $\Gamma$ and setting
\[\Delta=\{D\subseteq P:D\text{ is finite and $U\cup D$ is centred}\},\]
to obtain round $T$ such that $U\cup T$ is centred and $T\cap F_n\neq\emptyset$, for all $n$ (actually $F'_n=\{f\in F_n:U\cup\{f\}\text{ is centred}\}$ defines a finitely branching $\omega$-tree so in this case we could also obtain $T$ from a simple application of K\"onig's lemma \textendash\, see \cite[Lemma III.5.6]{Kunen2011}).  Thus $U\cup T^\prec$ is a round centred subset containing some element of $P\setminus U$ so $U$ could not have been maximal.

\item[\eqref{UltraForall}]  We immediately see that \eqref{UltraForall} implies \eqref{UltraExists}.  Conversely, if \eqref{UltraExists} holds and we are given finite $G\prec P\setminus U$ then, for each $g\in G$, we have finite $F_g\subseteq U$ with $\widehat{F}_g\perp g$.  Taking $F=\bigcup_{g\in G}F_g$, we then have $\widehat{F}\perp G$.

\item[\eqref{UltraForall}$\Rightarrow$\eqref{TightForall}]  If \eqref{UltraForall} holds and we are given $G\prec P\setminus U$ then we have finite $F\subseteq U$ with $\widehat{F}\perp G$.  For any other finite $F'\subseteq U$, we have $\emptyset\neq\widehat{F\cup F'}\perp G$, as $U$ is centred, so $\widehat{F}'\not\D G$.

\item[\eqref{TightForall}]  Just note that the right side of \eqref{TightForall} means $U\not\CC P\setminus U$.\qedhere
\end{itemize}
\end{proof}

The following result shows that we can even be more selective about our tight extensions.  For example, we might want to find a tight extension of some round $R$ but still avoid another given subset $S$.  The following result (in the $Q=\emptyset$ case) shows that this can be done as long as $R\not\CC S$.

\begin{rmk}
Consequently, this result is essentially an extension of Birkhoff's prime ideal theorem for distributive lattices (see \cite[Lemma I-3.20]{GierzHofmannKeimelLawsonMisloveScott2003}).  Indeed, if $(P,\prec)=(L\setminus\{0\},\leq)$, for some separative distributive lattice $L$ with minimum $0$, then the tight subsets are precisely the prime filters.  In this case, $R\not\CC S$ is saying that the filter generated by $R$ is disjoint from the ideal generated by $S$.  Birkhoff's theorem says that $R$ therefore extends to a prime filter disjoint from $S$.
\end{rmk}

\begin{thm}\label{TightStretching}
For any $Q,R,S\subseteq P$,
\[R\text{ is round and }Q\cup R\not\CC S\qquad\Rightarrow\qquad\exists\text{ tight }T\supseteq R\ (Q\cup T\not\CC P\setminus T\supseteq S).\]
\end{thm}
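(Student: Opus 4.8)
The plan is to run a Kuratowski--Zorn argument in the spirit of \autoref{maximalroundcentredtight}, but relative to the hypothesis $Q\cup R\not\CC S$ and aiming to land inside the complement of $S$ rather than merely centred. First I would define the poset
\[
\mathcal{T}=\{U\subseteq P: R\subseteq U,\ U\text{ is round, and }Q\cup U\not\CC S\},
\]
ordered by inclusion. It is nonempty since $R^\prec\in\mathcal{T}$: roundness of $R$ plus transitivity gives $R\D R^\prec$ (indeed $R\subseteq R^\prec$ and $R\subseteq R^{\prec\succ\prec}$), hence by auxiliarity $Q\cup R^\prec\D Q\cup R$ in the relevant sense, and composing with $Q\cup R\not\CC S$ via the transitivity-type properties in \autoref{DCproperties} keeps us outside $\CC S$; I would double-check this reduction carefully since $\CC$ mixes a $\supseteq$ on the left with $\subseteq$ on the right. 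Chains have upper bounds: given a chain in $\mathcal{T}$, its union $U$ is round (roundness is a property of finite character here — each witness lives in the union), and if $Q\cup U\CC S$ then some finite $F\subseteq Q\cup U$ has $\widehat{F}\C S$; the finitely many members of $F$ lying in $U$ all sit in a single member of the chain, contradicting that that member is in $\mathcal{T}$. So Zorn gives a maximal $U\in\mathcal{T}$, and I would then set $T=U$.

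The heart of the proof is showing this maximal $U$ is tight and that $Q\cup U\not\CC P\setminus U$ and $S\subseteq P\setminus U$. The inclusion $S\subseteq P\setminus U$: if $s\in U\cap S$ then, since $U$ is round, pick $u\in U$ with $u\prec s$, so $\widehat{u}=u^\succ\prec u\prec s$, giving $\widehat{u}\C s$ (using \eqref{FprecQ}) and hence $Q\cup U\CC S$, a contradiction. For tightness, by \autoref{tightChars} (in the centred form noted just after it, noting $U\neq P$ unless $P$ itself fails $P\not\CC S$, which is fine) it suffices to verify $U$ satisfies \eqref{Tight} and that $U\not\CC P\setminus U$. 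The latter is exactly the second conclusion, so both reduce to one claim: $Q\cup U\not\CC P\setminus U$.

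The main obstacle — and the step I expect to require the real work — is establishing $Q\cup U\not\CC P\setminus U$ from maximality. Suppose for contradiction $Q\cup U\CC P\setminus U$, so there are finite $F\subseteq Q\cup U$ and $H\prec P\setminus U$ (after expanding via \eqref{RCQ}) with $\widehat{F}\C H\prec P\setminus U$. The idea, mirroring the proof of \eqref{UltraExists}, is that some element of $H$ — equivalently some $a\prec P\setminus U$ — can be adjoined to $U$ without violating $Q\cup(\,\cdot\,)\not\CC S$; one then replaces $U$ by a round centred-style enlargement $U\cup\{a\}^\prec$ (or, more carefully, $U\cup T^\prec$ where $T$ comes from \autoref{SelectionPrinciple} applied to a shrinking sequence $a\C F_{n+1}\prec F_n\prec P\setminus U$ with $\Delta=\{D:D$ finite, $Q\cup U\cup D\not\CC S\}$ and $\Gamma=(F_n)$), contradicting maximality. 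Verifying the three hypotheses of \autoref{SelectionPrinciple} — $\succ$-closedness and $\Delta$-centredness of this $\Delta$ — is where \autoref{CentredDense}/\autoref{CentredDenseCor} must be invoked in the sharper $R=S$, $\not\CC$ form rather than the $R=\emptyset$ centred form, and getting the combinatorics of "$Q\cup U\cup\{f\}\not\CC S$ for some $f\in F_n$" to follow from $Q\cup U\not\CC S$ and $a\D F_n$ via \autoref{CentredDense} is the delicate point. Once that enlargement is in hand, maximality is contradicted, so $Q\cup U\not\CC P\setminus U$, completing the proof with $T=U$.
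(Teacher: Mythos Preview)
Your proposal is essentially the paper's own proof: Zorn to get a maximal round $T\supseteq R$ with $Q\cup T\not\CC S$, then contradict maximality via \autoref{CentredDense} and \autoref{SelectionPrinciple} if $Q\cup T\CC P\setminus T$, and finally check $S\subseteq P\setminus T$ from roundness.

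Two cosmetic points. First, there is no need to pass to $R^\prec$: $R$ itself is round by hypothesis, so $R\in\mathcal{T}$ directly and the reduction you flag as needing a ``double-check'' is avoided. Second, in the contradiction step you drift between a single element $a\prec P\setminus U$ and a shrinking sequence; the paper works entirely with finite sets, building $(G_n)$ from the initial $G\prec P\setminus T$ via \eqref{FprecQ} and \eqref{RCQ} so that $\widehat{F}\D G_n$ for all $n$, and then applies \autoref{CentredDense} with $Q\cup T$ in the role of $Q$, $G_n$ in the role of $F$, and $S$ in the role of $R$ (this is exactly your ``sharper $R=S$, $\not\CC$ form''). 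Once you streamline the notation accordingly, your argument and the paper's coincide.
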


\begin{proof}
As $Q\cup T\not\CC S$ depends only on the finite subsets of $T$, we can apply the Kuratowski-Zorn lemma to obtain maximal round $T\supseteq R$ with $Q\cup T\not\CC S$.  Looking for a contradiction, assume that $Q\cup T\CC P\setminus T$, which means we have finite $F\subseteq Q\cup T$ and $G\prec P\setminus T$ with $\widehat{F}\D G$.  By \eqref{FprecQ} and \eqref{RCQ}, we have a sequence $(G_n)$ of finite subsets of $P$ such that
\[\widehat{F}\D G\C G_{n+1}\prec G_n\prec P\setminus U,\]
and hence $\widehat{F}\D G_n$, for all $n$.  Thus for each $n$, we have $g\in G_n$ with $Q\cup T\cup\{g\}\not\CC S$, by \autoref{CentredDense}.  Thus we can apply \autoref{SelectionPrinciple}, taking $(G_n)$ for $\Gamma$ and setting
\[\Delta=\{D\subseteq P:D\text{ is finite and }Q\cup T\cup D\not\CC S\}\]
to obtain round $U$ such that $Q\cup T\cup U\not\CC S$ (actually, again K\"onig's lemma would suffice here).  But then $U^\prec\cap P\setminus T\neq\emptyset$ so $T\cup U^\prec$ would be a proper round extension of $T$ with $Q\cup T\cup U^\prec\not\CC S$, contradicting maximality.  Thus we must have indeed had $Q\cup T\not\CC P\setminus T$.  In particular, $T\not\CC P\setminus T$ so $T$ is tight, as $T$ is also round.  Lastly note that if we had $a\in T\cap S$ then, as $T$ is round, we would have $b\prec a$ with $b\in T\subseteq Q\cup T\not\CC S\ni a$ and hence $b\not\C a$, contradicting \eqref{FprecQ}, so $S\subseteq P\setminus T$.
\end{proof}

\subsection{The Tight Spectrum}

\begin{center}
\textbf{Recall our standing assumption that $P$ is an abstract pseudobasis.}
\end{center}

\begin{dfn}
The \emph{tight spectrum} $\mathcal{T}(P)$ of $P$ is the set of non-empty tight subsets of $P$ with the topology given by the following basis, for finite $F,G\subseteq P$.
\[O_F^G=\{T\in\mathcal{T}(P):F\subseteq T\text{ and }G\C P\setminus T\}.\]
\end{dfn}

\begin{rmk}
If we allowed $\emptyset$ to be part of the tight spectrum then we would always obtain compact rather than locally compact spaces.  A more serious issue arises for inverse semigroups, where we need to avoid the empty subset to ensure that multiplying tight subsets yields a groupoid operation.
\end{rmk}

For convenience, we often omit empty sets and write $O_F=O_F^\emptyset$ and $O^G=O_\emptyset^G$.  Incidentally, note that $O_\emptyset^\emptyset=\mathcal{T}(P)$, which of course would be open even if we did not include it in the basis.  Also note that \eqref{Ccup} yields
\begin{equation}\label{OFGdef}
O_F^G=\bigcap_{f\in F}O_f\cap\bigcap_{g\in G}O^g
\end{equation}
so the given sets are closed under finite intersections and really do form a basis.

\begin{rmk}\label{CastroQuestion}
For anyone familiar with Stone duality or point-free topology, it might be tempting to consider the weaker topology on tight subsets generated just by the sets $O_f=\{T\in\mathcal{T}(P):f\in T\}$.  The problem is that, for general abstract pseudobases, the resulting space may not be Hausdorff or even $T_1$, although it will still be locally compact and $T_0$.  A standard way of strengthening such a topology to make it Hausdorff is to consider the patch topology as in \autoref{PseudoPatch} below.  Indeed, combined with \autoref{abstract->concrete}, this shows that the resulting patch topology is precisely the topology obtained by adding the sets $O^g=\{T\in\mathcal{T}(P):g\C P\setminus T\}$ to the subbasis.  This answers a question posed by Gilles de Castro.
\end{rmk}

\begin{rmk}
Note $G\C P\setminus T$ implies $G\subseteq P\setminus T$, but not conversely.  That is unless $\prec$ is reflexive and hence a preorder \textendash\, then $G\C P\setminus T$ is indeed the same as $G\subseteq P\setminus T$ and the topology in this case could be viewed as coming from the product topology on the characteristic functions of the tight subsets.  Thus when $(P,\prec)=(S\setminus\{0\},\leq)$, for some $\wedge$-semilattice $S$ with minimum $0$, our tight spectrum agrees with Exel's original tight spectrum \textendash\, see \cite{ExelPardo2016}.
\end{rmk}

\begin{rmk}
There is also actually no need for $G$ above to be finite.  Indeed, for any $Q\subseteq P$, the definition of $\C$, \eqref{DC=>C} and \eqref{FprecQ} shows that $Q\C P\setminus T$ is equivalent to $Q\D G\C P\setminus T$, for some finite $G$.  Thus $O^Q$ is open as a union of open sets, i.e.
\[O^Q=\{T\in\mathcal{T}(P):Q\C P\setminus T\}=\bigcup_{\substack{G\text{ is finite}\\\text{and }Q\,\D\,G}}O^G.\]
\end{rmk}

Now we consider topological properties of the tight spectrum.

\begin{prp}\label{TightHausdorff}
The tight spectrum is Hausdorff.
\end{prp}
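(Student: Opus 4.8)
The plan is to separate two distinct non-empty tight subsets $S,T\in\mathcal{T}(P)$ using the basic open sets $O_q$ and $O^q$ associated to a single, well-chosen element $q\in P$. The key underlying observation is that, for \emph{any} $q\in P$, the sets $O_q$ and $O^q$ are disjoint: no tight subset $U$ can simultaneously satisfy $q\in U$ and $q\C P\setminus U$. Granting this, the whole task reduces to producing one $q$ with $q\in T$ and $q\C P\setminus S$.

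First I would observe that, after possibly swapping $S$ and $T$, there is some $p\in T\setminus S$, and then — since $T$ is round — pick $q\in T$ with $q\prec p$. This immediately gives $T\in O_q$. For $S\in O^q$ I must check $q\C P\setminus S$: as $\{q\}$ is finite with $\{q\}\prec\{p\}$, \eqref{FprecQ} yields $q\C p$, and since $p\notin S$ we have $\{p\}\subseteq P\setminus S$, so the auxiliarity of $\C$ to $\subseteq$ upgrades this to $q\C P\setminus S$, as required. (When $P$ is itself tight, $\mathcal{T}(P)$ has at most one point by \autoref{S0tight} and there is nothing to prove, so such a $p$ is always available in the remaining cases.)

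It remains to justify the underlying observation, which is the only step requiring any care. Suppose $U$ is tight with $q\in U$ and $q\C P\setminus U$. Unfolding the definition of $\C$, there is a finite $F$ with $q\D F\prec P\setminus U$; by \eqref{Ddef2} this says $q^\succ\succ F^\succ$, and since $\prec$ is transitive we have $q^{\succ\succ}\subseteq q^\succ$, hence $q^{\succ\succ}\succ F^\succ$, i.e. $q^\succ\D F$, so that $q^\succ\D F\prec P\setminus U$ and therefore $q^\succ\C P\setminus U$. Since $q^\succ=\widehat{\{q\}}$ and $\{q\}$ is a finite subset of $U$, this means $U\CC P\setminus U$, contradicting the tightness of $U$. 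This passage from $q\C P\setminus U$ (i.e. $\{q\}\C P\setminus U$) to $\widehat{\{q\}}\C P\setminus U$ — trivial when $\prec$ is reflexive, but here genuinely exploiting transitivity — is where I expect the only subtlety to lie; the rest is routine bookkeeping with the auxiliarity properties from \autoref{Preliminaries}. Assembling the three parts, $O_q$ and $O^q$ are disjoint open neighbourhoods of $T$ and $S$ respectively, so $\mathcal{T}(P)$ is Hausdorff.
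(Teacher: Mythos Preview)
Your proof is correct and follows essentially the same approach as the paper's: pick $p\in T\setminus S$ (after a possible swap), use roundness of $T$ to find $q\prec p$ in $T$, and separate with $O_q$ and $O^q$. The paper compresses your disjointness argument into the single line ``$V\not\CC P\setminus V$ hence $s\not\C P\setminus V$'', leaving implicit exactly the passage from $\{q\}\C P\setminus U$ to $\widehat{\{q\}}\C P\setminus U$ that you spell out via transitivity; your unpacking of this step is correct. (The parenthetical about $P$ itself being tight is unnecessary---the existence of some $p$ in the symmetric difference follows directly from $S\neq T$---but it does no harm.)
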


\begin{proof}
Take distinct $T,U\in\mathcal{T}(P)$.  Thus we must have $t\in T\setminus U$ (or vice versa).  As $T$ is round, we have $s\in T$ with $s\prec t$, which means $T\in O_s$ and $U\in O^s$.  If $s\in V\in\mathcal{T}(P)$ then $V\not\CC P\setminus V$ and hence $s\not\C P\setminus V$ so $O_s\cap O^s=\emptyset$.
\end{proof}

Note the above proof used the fact that $O^s_s=\emptyset$.  More generally, for any finite $F,G\subseteq P$, we can characterize when $O_F^G=\emptyset$ as follows.

\begin{prp}\label{OFGempty}
$O_F^G=\emptyset$ iff every $\C$-cover of $G$ is a $\C$-cover of $\widehat{F}$, i.e.
\begin{equation}\label{C=>L}
G\C H\qquad\Rightarrow\qquad\widehat{F}\C H.
\end{equation}
\end{prp}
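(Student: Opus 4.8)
The plan is to prove the contrapositive in each direction, working directly from the definition $O_F^G = \{T \in \mathcal{T}(P): F \subseteq T \text{ and } G \C P\setminus T\}$ together with the tight-extension machinery of \autoref{TightStretching}.

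First, suppose \eqref{C=>L} fails, so there is a finite $H$ with $G \C H$ but $\widehat{F} \not\C H$. I would like to produce a tight $T \in O_F^G$. The idea is to apply \autoref{TightStretching} with a suitable choice of round set $R$, and with $S$ chosen so that $P \setminus T \supseteq S$ forces $G \C P\setminus T$. Since $G \C H$, if we can arrange $H \subseteq P \setminus T$, then $G \C P \setminus T$ by auxiliarity \eqref{Daux} (or rather its $\C$-analogue). The natural choice is $R = \widehat{F}$ — but we first need $\widehat{F}$ to be round, which it is because $P$ is round and $\prec$ is transitive: indeed $\widehat{F} = \bigcap_{f\in F} f^\succ$ and one checks $\widehat{F}^\succ \supseteq \widehat{F}$ via transitivity, giving $\widehat{F} \D \widehat{F}$, hence roundness after passing to $\widehat{F}^\prec$ if necessary. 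Wait — more carefully, I should take $R$ to be a round subset of $\widehat{F}$; since $\widehat F \not\C H$ in particular $\widehat F \neq \emptyset$, and $\widehat F \succ \widehat F$ makes $\widehat{F}$ itself round. Then apply \autoref{TightStretching} with $Q = \emptyset$, this $R$, and $S = H$: the hypothesis $R \not\CC H$ holds because $R \D \widehat F$ reduces $R \CC H$ to $\widehat F \C H$, which fails. We obtain tight $T \supseteq R$ with $P \setminus T \supseteq H$. Then $F \subseteq T$ follows from $R \subseteq T$ together with the fact (from \autoref{Tight=>Frink}) that $T$ is a Frink filter, hence upward closed and equal to $T^\prec$: each $f \in F$ lies above every element of $f^\succ \cap T \supseteq \widehat F \cap T \ne \emptyset$... here I should double-check that $F \subseteq T$; since $\emptyset \ne R \subseteq \widehat{F} \subseteq f^\succ$ for each $f \in F$, any $r \in R$ satisfies $r \prec f$, and $r \in T$ with $T$ upward closed gives $f \in T$. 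Finally $G \C P \setminus T$ follows from $G \C H \subseteq P \setminus T$. Hence $T \in O_F^G$, so $O_F^G \ne \emptyset$.

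Conversely, suppose \eqref{C=>L} holds and take any $T \in \mathcal{T}(P)$ with $F \subseteq T$. I claim $G \not\C P \setminus T$, so $T \notin O_F^G$, whence $O_F^G = \emptyset$. Indeed, if $G \C P \setminus T$ then \eqref{C=>L} (applied with $H = P\setminus T$) gives $\widehat F \C P \setminus T$, i.e. $\widehat F \CC P \setminus T$ since $F \subseteq T$ witnesses a finite subset of $T$ with $\widehat F \C P\setminus T$ — that is, $T \CC P \setminus T$. But $T$ is tight, so $T \not\CC P \setminus T$, a contradiction.

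The main obstacle I anticipate is bookkeeping in the first direction: verifying that $\widehat{F}$ (or a round subset of it) is genuinely round under just transitivity of $\prec$ and roundness of $P$, and then correctly threading \autoref{TightStretching} so that all three conclusions — $F \subseteq T$, $G \C P\setminus T$, and $T$ tight — come out simultaneously. The auxiliarity properties \eqref{Daux}, \eqref{DC=>C}, and the Frink-filter consequence of \autoref{Tight=>Frink} that $T = T^\prec$ should make each individual verification short, but one must be careful that the set $S = H$ fed into \autoref{TightStretching} is exactly what is needed to guarantee $G \C P\setminus T$ in the end.
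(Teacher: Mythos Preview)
Your plan is correct: both directions go through as you outline, once the bookkeeping is tidied.  In particular $\widehat{F}$ is indeed round (for $p\in\widehat{F}$ take any $q\prec p$ from roundness of $P$; transitivity puts $q\in\widehat{F}$), and $\widehat{F}\not\C H$ with $\widehat{F}\neq\emptyset$ gives $\widehat{F}\not\CC H$ by the general implication $Q\CC R\Rightarrow Q\C R$ for non-empty $Q$ (your ``$R\D\widehat{F}$ reduces\ldots'' justification is not quite the right reason, but the conclusion holds).  Then \autoref{TightStretching} with $Q=\emptyset$, $R=\widehat{F}$, $S=H$ produces the required tight $T$.

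The paper, however, takes a more elementary route for the ``only if'' direction and avoids \autoref{TightStretching} entirely.  It first uses \eqref{RCQ} to interpolate a finite $G'$ with $G\C G'\prec H$; then $\widehat{F}\not\C H$ forces $\widehat{F}\not\D G'$, which yields a single witness $p$ with $F\succ p\perp G'$.  A $\prec$-decreasing sequence below $p$ is extended by Zorn to a \emph{maximal} round centred $T$, which is tight by \autoref{maximalroundcentredtight}.  This buys something your approach does not: the resulting $T$ is a maximal round centred subset, and the paper immediately exploits this in the very next corollary (density of maximal round centred subsets in $\mathcal{T}(P)$), whose proof simply points back to the $T$ just constructed.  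Your route via \autoref{TightStretching} gives a tight $T$ but not a maximal one, so you would need a separate argument for that corollary.
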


\begin{proof}
Assume $G\C H$ and $\widehat{F}\not\C H$.  By \eqref{RCQ}, we have finite $G'$ with $G\C G'\prec H$ and hence $\widehat{F}\not\D G'$.  This means we have $p$ with $F\succ p\perp G'$.  As $P$ is round, we have a sequence $p=p_1\succ p_2\succ\ldots$.  By the Kuratowski-Zorn lemma, this sequence extends to some maximal round centred $T$, necessarily with $F\subseteq T$ and $T\cap G'=\emptyset$.  By \eqref{TightForall}, $T$ is tight so $T\in O_F^G$, proving the `only if' part.

Conversely, if $T\in O_F^G$ then $F\subseteq T$ and $G\C P\setminus T$.  As $T$ is tight, $F\subseteq T\not\CC P\setminus T$ so $\widehat{F}\not\C P\setminus T$, proving the `if' part.
\end{proof}

This has several important corollaries, e.g. we can generalise \cite[Theorem 12.9]{Exel2008} which says that ultrafilters are dense in the tight spectrum of a $\wedge$-semilattice.

\begin{cor}
The maximal round centred subsets of $P$ are dense in $\mathcal{T}(P)$.
\end{cor}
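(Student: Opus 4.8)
The plan is to reduce density to a statement about basic open sets and then re-run the construction inside the proof of \autoref{OFGempty}. Since the sets $O_F^G$ (with $F,G\subseteq P$ finite) form a basis for $\mathcal{T}(P)$, it suffices to show that every non-empty $O_F^G$ contains a maximal round centred subset. So suppose $O_F^G\neq\emptyset$ and fix $T_0\in O_F^G$, so that $F\subseteq T_0$ and $G\C P\setminus T_0$; since $T_0$ is tight, $F\subseteq T_0\not\CC P\setminus T_0$ forces $\widehat F\not\C P\setminus T_0$. (Equivalently, \autoref{OFGempty} already tells us that \eqref{C=>L} fails for $O_F^G$.)

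Now I would follow the proof of \autoref{OFGempty} almost verbatim. By \eqref{RCQ} there is a finite $G'$ with $G\C G'\prec P\setminus T_0$; if we had $\widehat F\D G'$ then $\widehat F\D G'\prec P\setminus T_0$ would give $\widehat F\C P\setminus T_0$, so in fact $\widehat F\not\D G'$, which by the $\perp$-description of $\D$ means $\widehat F^\succ\cap G'^\perp\neq\emptyset$. Fix $p$ in this intersection. By transitivity $p\in\widehat F$, so $p\prec f$ for all $f\in F$, and also $p\perp g$ for all $g\in G'$. Using that $P$ is round, build a decreasing chain $p=p_1\succ p_2\succ\cdots$; this is a round centred subset of $P$, so by the Kuratowski--Zorn lemma it extends to a maximal round centred $T$.

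It remains to check $T\in O_F^G$. By \eqref{TightForall}, $T$ is tight, hence a Frink filter by \autoref{Tight=>Frink} and in particular upwards closed; as $f\succ p\in T$ for each $f\in F$, this gives $F\subseteq T$. And no $g\in G'$ can lie in $T$: otherwise roundness of $T$ would yield $t\in T$ with $t\prec g$, and centredness would yield some $r\in p^\succ\cap t^\succ\subseteq p^\succ\cap g^\succ$, contradicting $p\perp g$. Hence $G'\subseteq P\setminus T$, so $G\C G'\subseteq P\setminus T$ gives $G\C P\setminus T$ since $\C$ is auxiliary to $\subseteq$. Thus $T\in O_F^G$ is the desired maximal round centred subset. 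I do not anticipate any real obstacle here, since everything needed has essentially been carried out in \autoref{OFGempty}; the only point to keep in mind is that the tight subset produced there arises as a Kuratowski--Zorn-maximal round centred set, and one must verify, via \eqref{TightForall} and \autoref{Tight=>Frink}, that it genuinely lies in the prescribed $O_F^G$.
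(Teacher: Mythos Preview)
Your proof is correct and follows essentially the same route as the paper: the paper's proof simply observes that the tight subset $T$ constructed in the proof of \autoref{OFGempty} is already a maximal round centred subset, and you have faithfully re-run that construction (with the particular choice $H=P\setminus T_0$) and filled in the verifications that $F\subseteq T$ and $G'\cap T=\emptyset$ which the paper leaves implicit.
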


\begin{proof}
Any non-empty open set contains some non-empty $O_F^G$.  Now just note that the $T\in O_F^G$ in the proof of \autoref{OFGempty} is a maximal round centred subset.
\end{proof}

\begin{cor}
For all $p,q\in P$
\begin{equation}\label{pperpq}
p\perp q\qquad\Leftrightarrow\qquad O_p\cap O_q=\emptyset.
\end{equation}
\end{cor}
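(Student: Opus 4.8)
The plan is to prove the equivalence $p \perp q \Leftrightarrow O_p \cap O_q = \emptyset$ by applying \autoref{OFGempty} with $F = \{p,q\}$ and $G = \emptyset$. Since $O_p \cap O_q = O_{\{p,q\}}$ by \eqref{OFGdef}, the statement $O_p \cap O_q = \emptyset$ is exactly $O_F^G = \emptyset$ for this choice of $F$ and $G$. By \autoref{OFGempty}, this holds iff $G \C H \Rightarrow \widehat{F} \C H$ for all $H$, i.e. iff $\emptyset \C H \Rightarrow \widehat{\{p,q\}} \C H$ for all $H$. Now $\emptyset \C H$ holds for every $H$ (the empty set is a minimum for $\C$), so the condition collapses to: $\widehat{\{p,q\}} \C H$ for all $H \subseteq P$. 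Taking $H = \emptyset$, this forces $\widehat{\{p,q\}} \C \emptyset$, which by \eqref{CMin} (valid since $P$ is round) means $\widehat{\{p,q\}} = \emptyset$. Conversely, if $\widehat{\{p,q\}} = \emptyset$ then trivially $\widehat{\{p,q\}} = \emptyset \C H$ for all $H$, again by the minimality of $\emptyset$.

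Thus $O_p \cap O_q = \emptyset$ is equivalent to $\widehat{\{p,q\}} = \emptyset$, i.e. to $p^\succ \cap q^\succ = \emptyset$, which is precisely $p \perp q$ by the definition \eqref{Disjoint} of the disjoint relation (for singletons, $\{p\}^\succ = p^\succ$). So the two sides of \eqref{pperpq} match.

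The only genuine work is bookkeeping: confirming that \autoref{OFGempty}'s criterion \eqref{C=>L} really does reduce to $\widehat{F} = \emptyset$ when $G = \emptyset$, which it does because $\emptyset \C H$ is vacuously true for all $H$ and $\C$-covering the empty set characterises emptiness via \eqref{CMin}. There is no serious obstacle here; the result is an immediate specialisation of \autoref{OFGempty}. An alternative, more hands-on route would be to argue directly: if $p \not\perp q$ then $p^\succ \cap q^\succ \neq \emptyset$, pick $r$ in it, extend a round chain below $r$ to a maximal round centred (hence tight, by \eqref{TightForall}) subset $T$, and observe $p, q \in T$ since $T$ is upwards closed, so $T \in O_p \cap O_q$; conversely $T \in O_p \cap O_q$ forces $p, q \in T$ and roundness of $T$ gives a common lower bound in $T$, witnessing $p \not\perp q$. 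But invoking \autoref{OFGempty} is cleaner, so I would present that.
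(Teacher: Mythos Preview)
Your proof is correct and follows essentially the same route as the paper: both apply \autoref{OFGempty} with $F=\{p,q\}$ and $G=\emptyset$, then use \eqref{CMin} to conclude $\widehat{\{p,q\}}=\emptyset$, i.e.\ $p\perp q$. The only cosmetic difference is that the paper handles the forward implication ($p\perp q\Rightarrow O_p\cap O_q=\emptyset$) by the one-line observation that no centred set can contain both $p$ and $q$, whereas you run \autoref{OFGempty} uniformly in both directions; either presentation is fine.
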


\begin{proof}
If $p\perp q$ then no centred set can contain both $p$ and $q$ so $O_p\cap O_q=\emptyset$.  Conversely, if $O_{p,q}^\emptyset=O_p\cap O_q=\emptyset$ then, as $\emptyset\C\emptyset$, we must have $\widehat{\{p,q\}}\C\emptyset$, by \autoref{OFGempty}.  Thus $\widehat{\{p,q\}}=\emptyset$, by \eqref{CMin}, i.e. $p\perp q$.
\end{proof}

In particular, for any $p\in P$, we have $p\not\perp p$, as $P$ is round, so \eqref{pperpq} yields
\begin{equation}\label{Opnonempty}
O_p\neq\emptyset.
\end{equation}

\begin{cor}\label{FDG=>nonempty}
If $\widehat{F}\D G$ then $O_F^G=\emptyset$.
\end{cor}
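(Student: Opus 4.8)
The statement to prove is that $\widehat{F}\D G$ implies $O_F^G=\emptyset$, and the natural route is to deduce it directly from \autoref{OFGempty}. By that proposition, $O_F^G=\emptyset$ holds precisely when every $\C$-cover of $G$ is a $\C$-cover of $\widehat{F}$, i.e.\ when $G\C H$ implies $\widehat{F}\C H$. So the plan is: assume $\widehat{F}\D G$, take any $H$ with $G\C H$, and show $\widehat{F}\C H$.

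The chain is short. From $G\C H$ we get, by definition of $\C$, a finite set $G'$ with $G\D G'\prec H$. Combining $\widehat{F}\D G$ with $G\D G'$ and using transitivity of $\D$ (from \autoref{DCproperties}, valid since $\prec$ is transitive), we obtain $\widehat{F}\D G'\prec H$, which is exactly the statement $\widehat{F}\C H$. Hence \eqref{C=>L} holds and \autoref{OFGempty} gives $O_F^G=\emptyset$. Alternatively, and even more cleanly, one can argue that $\widehat{F}\D G\C H$ yields $\widehat{F}\C H$ in one step via \eqref{DC=>C}; this makes the whole proof a single line invoking \eqref{DC=>C} and \autoref{OFGempty}.

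There is essentially no obstacle here — the result is a formal consequence of the transitivity properties packaged in \autoref{DCproperties} (specifically \eqref{DC=>C}) together with the characterisation of empty basic open sets in \autoref{OFGempty}. The only thing to be slightly careful about is the direction of the cover relations: we are feeding $\widehat{F}\D G$ on the left of an arbitrary $\C$-cover $G\C H$, so we want the composition law $Q\D S\C R\Rightarrow Q\C R$, which is the first implication in \eqref{DC=>C}, applied with $Q=\widehat{F}$, $S=G$, $R=H$.

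\begin{proof}
Suppose $\widehat F\D G$. If $G\C H$ then \eqref{DC=>C} yields $\widehat F\C H$. Thus \eqref{C=>L} holds, so $O_F^G=\emptyset$ by \autoref{OFGempty}.
\end{proof}
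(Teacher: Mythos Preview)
Your proof is correct and essentially identical to the paper's own proof: both invoke \eqref{DC=>C} to pass from $\widehat{F}\D G\C H$ to $\widehat{F}\C H$, verifying \eqref{C=>L}, and then apply \autoref{OFGempty}.
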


\begin{proof}
If $\widehat{F}\D G\C H$ then $\widehat{F}\C H$, by \eqref{DC=>C}, so $O_F^G=\emptyset$, by \autoref{OFGempty}.
\end{proof}

Conversely, if $O_F^G=\emptyset$ and $P$ is a poset then $G$ itself is a $\C$/$\D$-cover of $G$ so $\widehat{F}\D G$, by \autoref{OFGempty}.

\begin{xpl}
This no longer holds when $P$ is not a poset.  For example, let $P=\mathbb{N}\cup\{a\}$ and let $\prec$ be $=$ on $\mathbb{N}$ while $n\prec a$ iff $n\neq2$ and $a\not\prec a$.  Then $2\perp a$ and, in particular, $2\not\D a$ even though $a$ has no $\C$-cover and hence $O^a_{2}=\emptyset$.
\end{xpl}

We can now show that $\D$ has the desired representation in $\mathcal{T}(P)$.

\begin{prp}
For any $F,Q\subseteq P$ with $F$ finite,
\begin{equation}\label{FDCQ}
\widehat{F}\D Q\qquad\Leftrightarrow\qquad O_F\subseteq\overline{\bigcup_{q\in Q}O_q}.
\end{equation}
\end{prp}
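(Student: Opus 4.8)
The plan is to prove the two implications separately; the reverse one is short, and the forward one carries the weight.

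For $\Leftarrow$ I argue contrapositively. If $\widehat F\not\D Q$, then by \eqref{Ddef2} we have $\widehat F^\succ\not\subseteq Q^{\succ\prec}$, so pick $p\in\widehat F^\succ$ with $p\notin Q^{\succ\prec}$. Unwinding the definitions, $p\notin Q^{\succ\prec}$ says exactly $p^\succ\cap Q^\succ=\emptyset$, i.e.\ $p\perp q$ for every $q\in Q$, while $p\in\widehat F^\succ$ together with transitivity of $\prec$ puts $p$ below every element of $F$. Using roundness of $P$ to choose a $\prec$-descending sequence below $p$ and then extending it to a non-empty tight subset $T$ (e.g.\ via \autoref{TightStretching} with empty parameters, or via a maximal round centred extension and \autoref{maximalroundcentredtight}), upward closure of $T$ from \autoref{Tight=>Frink} gives $p\in T$ and hence $F\subseteq T$, so $T\in O_F$. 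But $T\in O_p$, and $O_p\cap O_q=\emptyset$ for every $q\in Q$ by $p\perp q$ and \eqref{pperpq}, so $O_p$ is a neighbourhood of $T$ disjoint from $\bigcup_{q\in Q}O_q$; thus $T\notin\overline{\bigcup_{q\in Q}O_q}$ and $O_F\not\subseteq\overline{\bigcup_{q\in Q}O_q}$.

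For $\Rightarrow$, assume $\widehat F\D Q$ and fix $T\in O_F$; I must show each basic neighbourhood of $T$ meets some $O_q$. Such a neighbourhood is $O_{F'}^{G'}$ with finite $F'\subseteq T$ and $G'\C P\setminus T$, and enlarging $F'$ we may assume $F\subseteq F'$; since $O_{F'}^{G'}\cap O_q=O_{F'\cup\{q\}}^{G'}$ it suffices to find $q\in Q$ with $O_{F'\cup\{q\}}^{G'}\neq\emptyset$. Suppose not. By \autoref{OFGempty}, for every $q\in Q$ and every $H$ we have $G'\C H\Rightarrow\widehat{F'}\cap q^\succ\C H$, hence also $\widehat{F'}\cap q^\succ\D H$ by \eqref{DC=>C}. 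Because $\D$ respects arbitrary unions on the left (immediate from the characterisation $A\D H\Leftrightarrow A^\succ\subseteq H^{\succ\prec}$ in \eqref{Ddef2}, cf.\ the finite case \eqref{Dcup}), summing over $q$ gives $\widehat{F'}\cap Q^\succ\D H$ whenever $G'\C H$. On the other hand $\widehat{F'}\subseteq\widehat F\D Q$ gives $\widehat{F'}\D Q$ by \eqref{Daux}, and a short argument using transitivity of $\prec$ and roundness of $P$ improves this to $\widehat{F'}\D\widehat{F'}\cap Q^\succ$: for $x\in\widehat{F'}^\succ$ the relation $\widehat{F'}\D Q$ supplies $r\prec x$ with $r$ below some $q\in Q$, whence $r\in\widehat{F'}\cap q^\succ$, and a further $\prec$-step below $r$ witnesses $x\in(\widehat{F'}\cap Q^\succ)^{\succ\prec}$. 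Transitivity of $\D$ (\autoref{DCproperties}) then yields $\widehat{F'}\D H$ whenever $G'\C H$. It remains to promote this to $\widehat{F'}\C H$ for every $\C$-cover $H$ of $G'$, for then \autoref{OFGempty} gives $O_{F'}^{G'}=\emptyset$, contradicting $T\in O_{F'}^{G'}$; this promotion should use only the finiteness of $G'$ and $F'$ together with \eqref{RCQ} and \eqref{Shrinking}, which between them supply the compactness that turns a dense cover into a finite one.

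I expect this final promotion from $\D$ to $\C$ to be the main obstacle, since $\D$ and $\C$ genuinely differ and the possibly infinite set $Q$ rules out naively extracting a finite subcover of $Q$; the saving point is that the only infinite object, $Q$, enters only through the \emph{dense} cover relation (which localises), while all of the \emph{compact} content is carried by the finite data $F'$, $G'$ and the finite cover witnessing $G'\C P\setminus T$. A parallel route that avoids \autoref{OFGempty} is to produce a witness of $O_{F'\cup\{q\}}^{G'}\neq\emptyset$ directly from \autoref{TightStretching}: fixing a finite $G_0$ and witnesses $t_{g_0}\in P\setminus T$ with $G'\D G_0\prec P\setminus T$ and $g_0\prec t_{g_0}$, one uses $\widehat F\D Q$ and $\widehat{F'}\not\C P\setminus T$ to locate $q\in Q$ and a round subset $R$ below a common lower bound of $F'\cup\{q\}$ with $R\not\CC\{t_{g_0}:g_0\in G_0\}$, and then stretches $R$ to a tight $T'$ avoiding $\{t_{g_0}:g_0\in G_0\}$, so that $F'\cup\{q\}\subseteq T'$ and $G'\D G_0\prec P\setminus T'$, i.e.\ $T'\in O_{F'\cup\{q\}}^{G'}$; the bookkeeping is heavier but the idea is the same.
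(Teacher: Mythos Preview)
Your $\Leftarrow$ argument is correct and matches the paper's, which merely streamlines it by citing $O_p\neq\emptyset$ rather than constructing the tight $T$ explicitly.

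For $\Rightarrow$, the promotion you worry about does go through in two lines: given $G'\C H$, \eqref{RCQ} yields $G'\C H^\succ$, so $G'\D M\prec K\prec H$ for some finite $M,K$; then $G'\C K$, your established implication gives $\widehat{F'}\D K$, and since $K$ is finite with $K\prec H$ this is exactly $\widehat{F'}\C H$. So your route is sound. However, the paper's argument is considerably shorter because it also argues $\Rightarrow$ by contrapositive. If $O_F\not\subseteq\overline{\bigcup_q O_q}$, pick a basic $O_H^G\subseteq O_F\setminus\overline{\bigcup_q O_q}$ containing some $T$; use \eqref{RCQ} to get $G\C I\prec P\setminus T$; then \eqref{Tight-Round} applied to the tight $T$ with $F\cup H\subseteq T$ and $I\prec P\setminus T$ hands you a single $p$ with $F\cup H\succ p\perp I$. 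This $p$ satisfies $O_p\subseteq O_H^G$, so $O_p$ misses every $O_q$, whence $p\perp Q$ by \eqref{pperpq} and $\widehat F\not\D Q$. The contrapositive lets tightness produce the witness $p$ in one move, bypassing your detour through \autoref{OFGempty}, the union over $Q$, and the $\D$-to-$\C$ promotion.
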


\begin{proof}
If $\widehat{F}\not\D Q$ then we have $p\in P$ with $F\succ p\perp Q$, which means $O_F\supseteq O_p$ and $O_p\cap\bigcup_{q\in Q}O_q=\emptyset$, by \eqref{pperpq}.  By \eqref{Opnonempty}, $\emptyset\neq O_p\subseteq O_F\setminus\overline{\bigcup_{q\in Q}O_q}$ so $O_F\not\subseteq\overline{\bigcup_{q\in Q}O_q}$.

Conversely, if $O_F\not\subseteq\overline{\bigcup_{q\in Q}O_q}$ then we have $T\in O^G_H\subseteq O_F\setminus\overline{\bigcup_{q\in Q}O_q}$, for some finite $G,H\subseteq P$.  As $G\C P\setminus T$, \eqref{RCQ} yields $I$ with $G\C I\prec P\setminus T$.  As $T$ is tight, $F\cup H\subseteq T$ and $I\prec P\setminus T$, we have some $p\in P$ with $F\cup H\succ p\perp I$.  Thus $O_p\subseteq O^G_H$ and hence $O_p\cap\bigcup_{q\in Q}O_q=\emptyset$ so $p\perp Q$, by \eqref{pperpq}.  As we also have $F\succ p$, this shows that $\widehat{F}\not\D Q$.
\end{proof}

\begin{cor}\label{RDQ}
For any $Q,R\subseteq P$,
\[R\D Q\qquad\Leftrightarrow\qquad\bigcup_{r\in R}O_r\subseteq\overline{\bigcup_{q\in Q}O_q}.\]
\end{cor}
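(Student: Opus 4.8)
The plan is to reduce the statement to the singleton case of \eqref{FDCQ} and then take unions in both arguments. On the topological side this is immediate: $\bigcup_{r\in R}O_r\subseteq\overline{\bigcup_{q\in Q}O_q}$ if and only if $O_r\subseteq\overline{\bigcup_{q\in Q}O_q}$ for every $r\in R$. On the abstract side it is almost as easy: by \eqref{Ddef2}, $R\D Q$ means $R^\succ\subseteq Q^{\succ\prec}$, and since $R^\succ=\bigcup_{r\in R}\{r\}^\succ$ this holds iff $\{r\}^\succ\subseteq Q^{\succ\prec}$ for every $r\in R$, i.e.\ iff $\{r\}\D Q$ for every $r\in R$. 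So it suffices to prove the equivalence when $R=\{r\}$ is a singleton.

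For the singleton case I would invoke \eqref{FDCQ} with $F=\{r\}$: since $\widehat{\{r\}}=r^\succ$ and $O_{\{r\}}=O_r$, that gives $r^\succ\D Q\iff O_r\subseteq\overline{\bigcup_{q\in Q}O_q}$. It then only remains to match $\{r\}\D Q$ with $r^\succ\D Q$. The forward direction is free: $\{r\}\D Q$ says $r^\succ\subseteq Q^{\succ\prec}$, and transitivity of $\prec$ gives $r^{\succ\succ}\subseteq r^\succ\subseteq Q^{\succ\prec}$, which is $r^\succ\D Q$. For the converse, roundness of $P$ gives $r^\succ\subseteq(r^\succ)^{\succ\prec}=r^{\succ\succ\prec}$ (the general fact proved at the start of the proof of \eqref{QsubR}), which is precisely $\{r\}\D r^\succ$; combining this with $r^\succ\D Q$ via transitivity of $\D$ \textup{(}\autoref{DCproperties}\textup{)} yields $\{r\}\D Q$. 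This small interpolation-free identification of $\{r\}\D Q$ with $\widehat{\{r\}}\D Q$ is really the only point that needs care.

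Alternatively, one can bypass even that and argue directly as in the proof of \eqref{FDCQ}. If $R\not\D Q$, pick $p$ with $p\prec r$ for some $r\in R$ and $p\perp Q$; then $\emptyset\neq O_p\subseteq O_r\subseteq\bigcup_{r'\in R}O_{r'}$ by \eqref{Opnonempty} and upward closure of tight subsets, while $O_p$ is disjoint from $\bigcup_{q\in Q}O_q$ by \eqref{pperpq}, hence from its closure, so $\bigcup_{r'\in R}O_{r'}\not\subseteq\overline{\bigcup_{q\in Q}O_q}$. Conversely, if $\bigcup_{r\in R}O_r\not\subseteq\overline{\bigcup_{q\in Q}O_q}$ then $O_r\not\subseteq\overline{\bigcup_{q\in Q}O_q}$ for some $r\in R$, so $r^\succ\not\D Q$ by \eqref{FDCQ}, giving $p\prec t\prec r$ with $p\perp Q$; transitivity then gives $p\prec r$, witnessing $R\not\D Q$. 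Either way there is no real obstacle — the content is entirely carried by \eqref{FDCQ}, and the rest is bookkeeping about how $\D$ and the sets $O_r$ behave under unions.
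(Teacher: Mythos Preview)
Your proposal is correct and follows essentially the same route as the paper: reduce to singletons and invoke \eqref{FDCQ}. The paper's proof is even terser---it simply asserts that $R\D Q$ iff $r\D Q$ for all $r\in R$ and then applies \eqref{FDCQ} directly---so your careful identification of $\{r\}\D Q$ with $\widehat{\{r\}}\D Q$ (via roundness and transitivity of $\D$) actually fills in a step the paper glosses over.
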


\begin{proof}
Note $R\D Q$ iff $r\D Q$, for all $r\in R$.  By \eqref{FDCQ}, this is the same as saying $O_r\subseteq\overline{\bigcup_{q\in Q}O_q}$, for all $r\in R$, i.e. $\bigcup_{r\in R}O_r\subseteq\overline{\bigcup_{q\in Q}O_q}$.
\end{proof}

Next we wish to do the same for $\C$.  First we need the following.

\begin{prp}\label{TinClosureO_F}
For any finite $F\subseteq P$ and $T\in\mathcal{T}(P)$,
\begin{equation}\label{TinClosure}
T\in\overline{O_F}\qquad\Leftrightarrow\qquad T\cup F\not\CC P\setminus T.
\end{equation}
\end{prp}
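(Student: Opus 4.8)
The plan is to prove the two directions of \eqref{TinClosure} separately, using the basic neighbourhood sets $O_H^G$ and the characterisation of tightness via $\not\CC$.

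For the contrapositive of the $\Rightarrow$ direction, suppose $T\cup F\CC P\setminus T$. Then there is a finite $E\subseteq T$ with $\widehat{E}\cap\widehat{F}\C P\setminus T$, hence (since $T$ is round) a basic neighbourhood $O_E^G$ of $T$, where $G\prec P\setminus T$ is chosen finite with $\widehat{E}\cap\widehat{F}\D G$ — so that $\widehat{E}\cap\widehat{F}\C G$ and $O_E^G\ni T$. I claim $O_E^G\cap O_F=\emptyset$, which shows $T\notin\overline{O_F}$. Indeed, if $U\in O_E^G\cap O_F$ then $E\cup F\subseteq U$ and $G\C P\setminus U$; but $\widehat{E\cup F}=\widehat E\cap\widehat F\D G$, so $\widehat{E\cup F}\C P\setminus U$ by \eqref{DC=>C}, contradicting that $U$ is tight (i.e. $E\cup F\subseteq U\not\CC P\setminus U$). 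Here I should be slightly careful about the edge case $F=\emptyset$ or $E\cup F$ centred/not; but $O_F\ne\emptyset$ always when... actually if $\widehat{E}\cap\widehat F=\emptyset$ then $F\not\subseteq U$ for any tight $U$ containing $E$, so $O_E\cap O_F=\emptyset$ directly and we are done anyway.

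For the $\Leftarrow$ direction, suppose $T\cup F\not\CC P\setminus T$, and let $O_H^G$ be an arbitrary basic neighbourhood of $T$, so $H\subseteq T$ and $G\C P\setminus T$. I must produce some $U\in O_H^G\cap O_F$. First, using \eqref{RCQ} pass to a finite $G'$ with $G\C G'\prec P\setminus T$; then $H\cup F\subseteq T\cup F$ and $T\cup F\cup G'\not\CC S$ fails only if... more precisely, since $T\cup F\not\CC P\setminus T$ and $G'\prec P\setminus T$, the hypothesis $R:=F\cup H$ is... hmm, the cleanest route is to apply \autoref{TightStretching} with the roles set up so that the output tight set contains $F$ and meets the right constraints. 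Concretely, take $R=F^\prec\cup(\text{a round coinitial subset below }H)$ — actually better: apply \autoref{TightStretching} with $Q=\emptyset$, with the round set being a round subset $R_0$ with $R_0^\prec\supseteq F\cup H$ (possible since $P$ is round, e.g. $R_0=\{p: p\prec f \text{ for some }f\in F\cup H\}$ won't contain $F,H$ themselves — but $O_F=O_{F^\prec}$ up to the upward-closure, and tight sets are upward closed, so it suffices to get $R_0$ into the tight set), and with $S=G'$. The point is that $T$ itself witnesses $R_0\cup\emptyset\not\CC G'$ (as $R_0\subseteq T$ and $T\cup F\not\CC P\setminus T\supseteq G'$, hence $R_0\cup F\not\CC G'$ — wait, I need $F$ in there too). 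So in fact I apply \autoref{TightStretching} with $Q=F$, round set $R_0$, and $S=G'$: the needed hypothesis $F\cup R_0\not\CC G'$ holds because $F\cup R_0\subseteq F\cup T$ (using $R_0\subseteq T$) and $F\cup T\not\CC P\setminus T\supseteq G'$, invoking auxiliarity of $\CC$. The conclusion gives a tight $U\supseteq R_0$ with $F\cup U\not\CC P\setminus U\supseteq G'$. Then $U$ is upward closed (Frink filter, \autoref{Tight=>Frink}) so $H\subseteq R_0^\prec\subseteq U$ and $F\subseteq U$ (from $F\subseteq F\cup U$ and... no — I need $F\subseteq U$, which does not follow from $F\cup U\not\CC\cdot$).

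The honest fix: take $R_0$ to also sit below $F$, and use $O_F=\{V: F\subseteq V\}=\{V: F^\prec\cap V\ne\emptyset \text{ coinitially}\}$ — since tight sets are round and upward closed, $F\subseteq V$ iff $V$ contains, for each $f\in F$, some $f'\prec f$; so it suffices that $U$ contains a fixed round coinitial-below-$F$ set, i.e. put $F$-witnesses into $R_0$ as well and take $Q=\emptyset$. Then the hypothesis becomes $R_0\not\CC G'$, which holds since $R_0\subseteq T\cup F$ and $T\cup F\not\CC P\setminus T\supseteq G'$; and the conclusion tight $U\supseteq R_0$ has $F,H\subseteq U$ (upward closure) and $G'\subseteq P\setminus U$, hence $G\C G'\prec P\setminus U$ gives $G\C P\setminus U$ by \eqref{DC=>C}, so $U\in O_H^G\cap O_F$ as required. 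Since $O_H^G$ was arbitrary, $T\in\overline{O_F}$.

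I expect the main obstacle to be the bookkeeping in the $\Leftarrow$ direction: correctly packaging "$F\subseteq U$" and "$H\subseteq U$" as consequences of $U$ being a tight (hence round, upward-closed) extension of a single chosen round set $R_0$ coinitial below $F\cup H$, and verifying the hypothesis of \autoref{TightStretching} in the form $R_0\not\CC G'$ via the auxiliarity of $\CC$ and \eqref{RCQ}. The $\Rightarrow$ direction is comparatively routine once one spots the right basic neighbourhood $O_E^G$ with $\widehat E\cap\widehat F\D G$.
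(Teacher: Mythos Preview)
Your $\Rightarrow$ direction is correct and matches the paper's argument: given $T\cup F\CC P\setminus T$, you produce finite $E\subseteq T$ and $G$ with $\widehat{E}\cap\widehat{F}\D G\prec P\setminus T$, and then $O_E^G$ is a basic neighbourhood of $T$ disjoint from $O_F$ (the paper phrases this as $O_{E\cup F}^G=\emptyset$ via \autoref{FDG=>nonempty}, but it is the same computation).

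Your $\Leftarrow$ direction has a real gap. In the ``honest fix'' you take $R_0$ to be round and coinitial below $F\cup H$, then assert $R_0\subseteq T\cup F$ in order to invoke the auxiliarity of $\CC$. But this containment fails in general: for $f\in F\setminus T$, a round sequence $f\succ f_1\succ f_2\succ\ldots$ need not land in $T$ (since $T$ is upward, not downward, closed) and certainly need not land in the finite set $F$. So you cannot deduce $R_0\not\CC G'$ from $T\cup F\not\CC P\setminus T$. Your earlier attempt with $Q=F$ and $R_0\subseteq T$ avoids this problem but, as you yourself note, the conclusion of \autoref{TightStretching} then only gives $F\cup U\not\CC P\setminus U$, which does \emph{not} force $F\subseteq U$.

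The paper sidesteps all of this by arguing the $\Leftarrow$ direction contrapositively and invoking \autoref{OFGempty}: if $T\notin\overline{O_F}$ then some $O_{F'}^G\ni T$ satisfies $O_{F\cup F'}^G=\emptyset$; since $G\C P\setminus T$, \eqref{C=>L} gives $\widehat{F\cup F'}\C P\setminus T$ directly. If you prefer your constructive route, the fix is to first find a \emph{single} $p$ with $F\cup H\succ p\perp G'$ (possible because $\widehat{F\cup H}\not\D G'$), then take $R_0$ to be a round sequence below $p$; now upward closure of any tight extension $U$ automatically gives $F\cup H\subseteq U$, and $p\perp G'$ forces $G'\cap U=\emptyset$.
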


\begin{proof}
Note $T\notin\overline{O_F}$ iff we have a basic neighbourhood of $T$ disjoint from $O_F$, i.e. iff we have finite $F'\subseteq T$ and $G\C P\setminus T$ with $O_{F\cup F'}^G=O_F\cap O_{F'}^G=\emptyset$.  By \eqref{C=>L}, this implies $\widehat{F'\cup F}\C P\setminus T$ and hence $T\cup F\CC P\setminus T$.

Conversely, if $T\cup F\CC P\setminus T$ then we have finite $F'\subseteq T$ with $\widehat{F'\cup F}\C P\setminus T$ and hence we have finite $G$ with $\widehat{F'\cup F}\D G\prec P\setminus T$.  So $G\C P\setminus T$, by \eqref{FprecQ}, and $O_F\cap O_{F'}^G=O_{F\cup F'}^G=\emptyset$, by \autoref{FDG=>nonempty}.  Thus $T\in O_{F'}^G$ and hence $T\notin\overline{O_F}$.
\end{proof}

Let $\Subset$ denote the compact containment relation, i.e.
\[O\Subset N\qquad\Leftrightarrow\qquad\exists\text{ compact }C\ (O\subseteq C\subseteq N).\]
So in Hausdorff spaces, $O\Subset N$ is just saying that $\overline{O}$ is compact and $\overline{O}\subseteq N$.

\begin{thm}\label{O_FSubset}
For any $F,Q\subseteq P$ with $F$ finite,
\begin{equation}\label{FCCQ}
\widehat{F}\C Q\qquad\Leftrightarrow\qquad O_F\Subset\bigcup_{q\in Q}O_q.
\end{equation}
\end{thm}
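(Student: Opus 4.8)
The plan is to use $\overline{O_F}$ itself as the compact set witnessing $O_F\Subset\bigcup_{q\in Q}O_q$. For the forward implication, assume $\widehat F\C Q$; since $\C$ is auxiliary to $\subseteq$ and $Q\subseteq P$ we also have $\widehat F\C P$. First note $\overline{O_F}\subseteq\bigcup_{q\in Q}O_q$: if some $T\in\overline{O_F}$ lay in no $O_q$ then $Q\subseteq P\setminus T$, whence $\widehat F\C Q\subseteq P\setminus T$ gives $\widehat F\C P\setminus T$, so $T\cup F\CC P\setminus T$ (witnessed by $F$ itself), contradicting $T\in\overline{O_F}$ by \autoref{TinClosureO_F}. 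Since trivially $O_F\subseteq\overline{O_F}$, the forward direction reduces to showing that $\overline{O_F}$ is compact whenever $\widehat F\C P$, which is the heart of the matter and is discussed below.

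For the reverse implication, suppose $O_F\subseteq C\subseteq\bigcup_{q\in Q}O_q$ with $C$ compact. Passing to a finite subcover of the compact set $C$ gives a finite $Q_0\subseteq Q$ with $O_F\subseteq C\subseteq\bigcup_{q\in Q_0}O_q$, and it suffices to prove $\widehat F\C Q_0$, as $Q_0\subseteq Q$ and $\C$ is auxiliary to $\subseteq$. For each $T\in C$ there is $q\in Q_0$ with $q\in T$, and as $T$ is round there is $p\in T$ with $p\prec q$; thus the open sets $O_p$, where $p\prec q$ for some $q\in Q_0$, cover $C$. A finite subcover yields a finite $G\prec Q_0$ with $O_F\subseteq\bigcup_{p\in G}O_p\subseteq\overline{\bigcup_{p\in G}O_p}$, so $\widehat F\D G$ by \eqref{FDCQ}, and hence $\widehat F\D G\prec Q_0$ with $G$ finite, i.e.\ $\widehat F\C Q_0$ by the definition of $\C$.

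It remains to prove that $\widehat F\C P$ implies $\overline{O_F}$ is compact. I would argue by contradiction, fixing a cover of $\overline{O_F}$ by basic open sets $O_{F_i}^{G_i}$ ($i\in I$) with no finite subcover and producing a tight $T\in\overline{O_F}$ lying outside every $O_{F_i}^{G_i}$, using \autoref{SelectionPrinciple}. Take $\Gamma$ to be the collection of finite $G\prec P$ with $\widehat F\C G$; it is nonempty since $\widehat F\C P$, and it is $\prec$-round because, given such a $G$, writing $\widehat F\D G'\prec G$ with $G'$ finite and applying \eqref{Shrinking} to each $g'\prec g\in G$ to get finite $H_{g'}$ with $g'\C H_{g'}\prec g$, the set $\bigcup_{g'}H_{g'}$ again lies in $\Gamma$ and refines $G$. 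Observe also that every $T\in\overline{O_F}$ meets every $G\in\Gamma$, by the argument used in the first paragraph. One then takes $\Delta$ to record which finite subsets of $P$ are compatible with simultaneously sitting inside $\overline{O_F}$ and avoiding a prescribed finite subfamily of the cover, verifies that $\Delta$ is $\succ$-closed (using that tight subsets are upward closed, by \autoref{Tight=>Frink}) and that $\Gamma$ is $\Delta$-centred, and feeds the resulting round selector $R$ into \autoref{TightStretching} to obtain a tight $T\supseteq R$ with $F\cup T\not\CC P\setminus T$ — so $T\in\overline{O_F}$ — that avoids the cover.

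The main obstacle is the design of $\Delta$ and the verification of the $\Delta$-centred hypothesis: since the finite subfamily of the cover that must be avoided has to be allowed to vary, the witnessing finite set in the $\Delta$-centred condition cannot be chosen separately for each subfamily, and combining the failure of any finite subcover with the finiteness of the members of $\Gamma$ is exactly the step that forces one to invoke the full strength of \autoref{SelectionPrinciple} rather than a mere König's lemma argument.
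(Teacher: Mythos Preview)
Your reverse implication is correct and in fact cleaner than the paper's: the paper proves $\widehat F\not\C Q\Rightarrow O_F\not\Subset\bigcup_q O_q$ by an explicit construction (splitting into cases $\widehat F\C P$ and $\widehat F\not\C P$ and invoking \autoref{SelectionPrinciple} and \autoref{TightStretching} again), whereas your direct compactness argument via \eqref{FDCQ} is exactly how the paper later handles the reverse direction of \autoref{Crep}. Your first paragraph, showing $\overline{O_F}\subseteq\bigcup_q O_q$, also matches the paper.

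The gap is in the compactness argument, and it is not merely a matter of filling in details: your choices of $\Gamma$ and $\Delta$ are misdirected. Your $\Gamma=\{G:\widehat F\C G\}$ only guarantees that the selector $R$ meets every such $G$; this is a \emph{necessary} condition for any $T\in\overline{O_F}$ but does nothing to make the eventual tight $T$ avoid the given cover. Meanwhile you want $\Delta$ to encode ``avoiding a prescribed finite subfamily of the cover'', but $\Delta$ must be a fixed collection, and the disjunctive condition ``$F_i\not\subseteq T$ or $G_i\not\C P\setminus T$'' for avoiding a basic $O_{F_i}^{G_i}$ does not fit naturally into either $\Gamma$ or $\Delta$.

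The missing idea is to invoke the Alexander subbasis theorem first, reducing to covers by \emph{subbasic} sets $(O_s)_{s\in S}$ and $(O^G)_{G\in\mathcal G}$. The two types then separate cleanly: avoiding every $O^G$ means $G\not\C P\setminus T$, i.e.\ $T$ meets every finite $H$ with $G\C H$, which is exactly a selector condition, so one takes $\Gamma=\{H:\exists G\in\mathcal G,\ G\C H\}$. Avoiding every $O_s$ means $S\subseteq P\setminus T$; combined with $T\in\overline{O_F}$ this becomes $F\cup T\not\CC S$, a single global condition encoded by $\Delta=\{D\text{ finite}:\widehat{D\cup F}\not\C S\}$. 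The $\Delta$-centred verification then uses that no finite subfamily of the subbasic cover works, and \autoref{TightStretching} (with this $S$) produces the desired $T$. Your instinct that this is where the full strength of \autoref{SelectionPrinciple} is needed is correct, but the decomposition into $\Gamma$ and $\Delta$ has to follow the subbasic split, not the one you proposed.
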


\begin{proof}
Assume $F$ is finite and $\widehat{F}\C Q$.  Then, for any $T\in\mathcal{T}(P)\setminus\bigcup_{q\in Q}O_q$, we have $\widehat{F}\C Q\subseteq P\setminus T$ so $T\notin\overline{O_F}$, by \autoref{TinClosureO_F}.  Thus $\overline{O_F}\subseteq\bigcup_{q\in Q}O_q$.

To prove that $\overline{O_F}$ is compact, it suffices to show that every subbasic open cover has a finite subcover, by the Alexander subbasis theorem.  So assume we have some $S\subseteq P$ and a collection $\mathcal{G}$ of finite subsets of $P$ such that no finite subcollection of $(O_s)_{s\in S}$ and $(O^G)_{G\in\mathcal{G}}$ covers $\overline{O_F}$ (in particular we must have $O_F\neq\emptyset$ and hence $\widehat{F}\neq\emptyset$).  We can also assume that $\mathcal{G}$ contains some $G$ with $G\C H$, for some finite $H$.  Indeed, we have $G$ with $\widehat{F}\C G\prec Q$ and hence $\emptyset\neq G\prec H\neq\emptyset$ for some finite $H\subseteq Q$.  Thus $\overline{O_F}\cap O^G=O_F^G=\emptyset$, which means we could add $G$ to $\mathcal{G}$ and there would still be no finite subcollection covering $\overline{O_F}$.

We need to show that the entire collection of $(O_s)_{s\in S}$ and $(O^G)_{G\in\mathcal{G}}$ does not cover $\overline{O_F}$ either.  For this we first apply the selection principle in \autoref{SelectionPrinciple} with
\begin{align*}
\Delta&=\{H\subseteq P:H\text{ is finite and }\widehat{H\cup F}\not\C S\}.\\
\Gamma&=\{H\subseteq P:H\text{ is finite and }\mathcal{G}\ni G\C H\}.
\end{align*}
We immediately see that $\Delta$ satisfies \eqref{succClosed}.  Also $\Gamma$ satisfies \eqref{precRound}, by \eqref{RCQ}.  Moreover, we can assume $\Gamma$ contains some non-empty $H$, as mentioned above.  To see that \eqref{DeltaCentred} holds, take finite $\Phi\subseteq\Gamma$.  For each $H\in\Phi$, we have $G_H\in\mathcal{G}$ with $G_H\C H$.  For every finite $I\subseteq S$, the fact that $\overline{O_F}$ has no finite cover in $(O_s)_{s\in S}$ and $(O^G)_{G\in\mathcal{G}}$ yields
\[T\in\overline{O_F}\setminus\bigcup_{s\in I}O_s\cup\bigcup_{H\in\Phi}O^{G_H}.\]
For each $H\in\Phi$, the fact that $T\notin O^{G_H}$ and $G_H\C H$ means $H\not\subseteq P\setminus T$, i.e. $T\cap H\neq\emptyset$.  Thus $D=\bigcup_{H\in\Phi}T\cap H$ satisfies $D\cap H\neq\emptyset$, for each $H\in\Phi$.  By \eqref{TinClosure},
\[D\cup F\subseteq T\cup F\not\CC P\setminus T\supseteq I.\]
As this holds for all finite $I\subseteq S$, we have $\widehat{D\cup F}\not\C S$ and hence $D\in\Delta$.  This verifies \eqref{DeltaCentred}, as required.

Applying \autoref{SelectionPrinciple} we obtain $R\subseteq P$ such that $H\cap R\neq\emptyset$, for all $H\in\Gamma$, and $D\in\Delta$, for all finite $D\subseteq R$, which means $R\cup F\not\CC S$.  By \autoref{TightStretching}, we have tight $T\supseteq R$ with $T\cup F\not\CC P\setminus T\supseteq S$.  Thus $T\neq\emptyset$, as $\Gamma\neq\emptyset$, and $T\in\overline{O_F}$, by \eqref{TinClosure}.  Moreover $T\notin\bigcup_{s\in S}O_s$, as $S\subseteq P\setminus T$, and $T\notin\bigcup_{G\in\mathcal{G}}O^G$, as $H\cap T\neq\emptyset$, for all $H$ with $G\C H$, for some $G\in\mathcal{G}$.  Thus $T\in\overline{O_F}\setminus(\bigcup_{s\in S}O_s\cup\bigcup_{G\in\mathcal{G}}O^G)$ and hence the entirety of $(O_s)_{s\in S}$ and $(O^G)_{G\in\mathcal{G}}$ does not cover $\overline{O_F}$ either.  This shows that $\overline{O_F}$ is indeed compact and hence $O_F\Subset\bigcup_{q\in Q}O_q$.

Conversely, assume $\widehat{F}\not\C Q$.  If, moreover, $\widehat{F}\C P$ then, by \eqref{RCQ}, we have a sequence $(G_n)$ of finite subsets of $P$ such that, for all $n$,
\[\widehat{F}\C G_{n+1}\prec G_n\prec P.\]
Using \autoref{CentredDense} and \autoref{SelectionPrinciple} (or K\"onig's lemma), we obtain round $R$ with $F\cup R\not\CC Q$ and $R\cap G_n\neq\emptyset$, for each $n$.  In particular, $R\neq\emptyset$.  By \autoref{TightStretching}, we have tight $T\supseteq R$ with $F\cup T\not\CC P\setminus T\supseteq Q$.  By \eqref{TinClosure}, this means $T\in\overline{O_F}\setminus\bigcup_{q\in Q}O_q$ and hence $O_F\not\Subset\bigcup_{q\in Q}O_q$.

On the other hand, if $\widehat{F}\not\C P$ then $\overline{O_F}$ is not even compact so $O_F\not\Subset\bigcup_{q\in Q}O_q$.  To see this, note that $(O_p)_{p\in P^\succ}$ covers the entire tight spectrum, as each $T\in\mathcal{T}(P)$ is non-empty and round.  However, for any finite $G\prec P$, $\widehat{F}\not\C P$ yields $\widehat{F}\not\D G$ so we have $f$ with $F\succ f\perp G$.  By \eqref{Opnonempty}, we have $T\in O_f\subseteq O_F$ and hence $T\notin\bigcup_{g\in G}O_g$, by \eqref{pperpq}.  As $G$ was arbitrary, this shows $(O_p)_{p\in P^\succ}$ has no finite subcover of $O_F$.
\end{proof}

\begin{cor}\label{Crep}
For any $Q,R\subseteq P$,
\[R\C Q\qquad\Leftrightarrow\qquad\bigcup_{r\in R}O_r\Subset\bigcup_{q\in Q}O_q.\]
\end{cor}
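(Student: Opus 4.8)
The plan is to deduce this from \autoref{RDQ} and \autoref{O_FSubset}, using the fact that $R\C Q$ means $R\D F\prec Q$ for some \emph{finite} $F$, and that each $\overline{O_f}$ for such an $f$ is a compact set trapped inside $\bigcup_{q\in Q}O_q$; conversely, the finite $F$ will be manufactured from a compact witness of $\Subset$ by an open-cover argument.

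For the forward direction I would start from a finite $F$ with $R\D F\prec Q$. By \autoref{RDQ}, $R\D F$ gives $\bigcup_{r\in R}O_r\subseteq\overline{\bigcup_{f\in F}O_f}$, and since $F$ is finite the right-hand side equals $\bigcup_{f\in F}\overline{O_f}$. For each $f\in F$ pick $q\in Q$ with $f\prec q$; then \eqref{Shrinking} gives $f\C q^\succ$, and since $q^\succ\prec Q$, transitivity of $\prec$ yields $f\C Q$, hence $\widehat{\{f\}}=f^\succ\C Q$ as $P$ is round. By \autoref{O_FSubset} this means $O_f\Subset\bigcup_{q\in Q}O_q$; since the tight spectrum is Hausdorff (\autoref{TightHausdorff}), a compact witness of this containment is closed and so contains $\overline{O_f}$, making $\overline{O_f}$ compact with $\overline{O_f}\subseteq\bigcup_{q\in Q}O_q$. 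Then $C:=\bigcup_{f\in F}\overline{O_f}$ is a finite union of compacts, hence compact, and $\bigcup_{r\in R}O_r\subseteq C\subseteq\bigcup_{q\in Q}O_q$, which is exactly $\bigcup_{r\in R}O_r\Subset\bigcup_{q\in Q}O_q$.

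For the converse I would take a compact $C$ with $\bigcup_{r\in R}O_r\subseteq C\subseteq\bigcup_{q\in Q}O_q$; by Hausdorffness $K:=\overline{\bigcup_{r\in R}O_r}$ is a closed subset of $C$, hence compact, and $K\subseteq\bigcup_{q\in Q}O_q$. For each $T\in K$ there is $q\in Q$ with $q\in T$, and since $T$ is round there is $p\in T$ with $p\prec q$, so $T\in O_p$ with $p\in Q^\succ$; thus $(O_p)_{p\in Q^\succ}$ is an open cover of $K$, and compactness yields a finite $F\subseteq Q^\succ$ with $K\subseteq\bigcup_{f\in F}O_f$. By construction $F\prec Q$, and $\bigcup_{r\in R}O_r\subseteq K\subseteq\overline{\bigcup_{f\in F}O_f}$ gives $R\D F$ by \autoref{RDQ}; as $F$ is finite, $R\D F\prec Q$ is precisely $R\C Q$.

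The step I expect to need the most care is making the forward passage genuinely work: one cannot simply observe that each individual $O_r$ is compactly contained in $\bigcup_{q\in Q}O_q$ and union up, because $R$ may be infinite and a union of compactly-contained open sets need not be compactly contained. The finite refinement $F$ supplied by $R\C Q$ is exactly what overcomes this, and \eqref{Shrinking} is what guarantees the relevant $\overline{O_f}$ are simultaneously compact and swallowed by $\bigcup_{q\in Q}O_q$. Everything else — the two invocations of \autoref{RDQ} and the open-cover extraction in the converse — is routine.
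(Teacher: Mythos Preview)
Your proof is correct and follows essentially the same route as the paper's: both directions use \autoref{RDQ} to pass between $\D$ and closure containment, and \autoref{O_FSubset} for the compactness of each $\overline{O_f}$; the converse is identical, extracting a finite $F\subseteq Q^\succ$ from an open cover of the compact closure. The only cosmetic difference is that in the forward direction you route $f\prec q\Rightarrow f\C Q$ through \eqref{Shrinking}, whereas the paper invokes \eqref{FprecQ} directly---either way lands you at $\widehat{\{f\}}\C Q$ and hence $O_f\Subset\bigcup_{q\in Q}O_q$.
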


\begin{proof}
If $R\C Q$ then $R\D F\prec Q$, for some finite $F\subseteq P$.  Then \eqref{RDQ} yields $\bigcup_{r\in R}O_r\subseteq\overline{\bigcup_{f\in F}O_f}$, while \eqref{FprecQ} and \eqref{FCCQ} yield $O_f\Subset\bigcup_{q\in Q}O_q$, for all $f\in F$.  Thus $\overline{\bigcup_{f\in F}O_f}=\bigcup_{f\in F}\overline{O_f}$ is compact and $\bigcup_{r\in R}O_r\subseteq\overline{\bigcup_{f\in F}O_f}\subseteq\bigcup_{q\in Q}O_q$, i.e.
\[\bigcup_{r\in R}O_r\Subset\bigcup_{q\in Q}O_q.\]

Conversely, if $\bigcup_{r\in R}O_r\Subset\bigcup_{q\in Q}O_q$ then, for each $T\in\overline{\bigcup_{r\in R}O_r}\subseteq\bigcup_{q\in Q}O_q$, we have some $t\in T\cap Q^\succ$.  Thus $\overline{\bigcup_{r\in R}O_r}\subseteq\bigcup_{t\in Q^\succ}O_t$ and, by compactness, we have some finite $F\prec Q$ with $\overline{\bigcup_{r\in R}O_r}\subseteq\bigcup_{f\in F}O_f$ and hence $\bigcup_{r\in R}O_r\subseteq\overline{\bigcup_{f\in F}O_f}$.  Thus $R\D F\prec Q$, by \eqref{RDQ}, i.e. $R\C Q$.
\end{proof}

Recall that a Hausdorff space is \emph{locally compact} iff each point has a compact neighbourhood (which implies that each point actually has a neighbourhood base of compact sets and is thus consistent with \autoref{LCdefinitions} below).

\begin{cor}\label{TightLocallyCompact}
The tight spectrum is locally compact.
\end{cor}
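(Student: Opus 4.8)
The plan is to invoke the characterisation of local compactness recalled just above the statement: since $\mathcal{T}(P)$ is Hausdorff by \autoref{TightHausdorff}, it suffices to produce, for each $T\in\mathcal{T}(P)$, one compact neighbourhood.

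The key is to exploit that every $T\in\mathcal{T}(P)$ is non-empty and round. First I would pick $t\in T$ and then, using roundness of $T$, pick $r\in T$ with $r\prec t$. Passing to such an $r$ is what lets \eqref{Shrinking} engage: $r\prec t$ gives $r\C t^\succ$, that is $\{r\}\C t^\succ$. Now \autoref{Crep}, applied with $R=\{r\}$ and $Q=t^\succ$, yields $O_r=\bigcup_{p\in\{r\}}O_p\Subset\bigcup_{q\in t^\succ}O_q$; since the space is Hausdorff, this means in particular that $\overline{O_r}$ is compact (being a closed subset of any compact set witnessing $\Subset$).

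It then remains to observe that $\overline{O_r}$ is a neighbourhood of $T$: since $r\in T$ we have $T\in O_r$, and $O_r$ is open, so $O_r\subseteq\overline{O_r}$ exhibits the compact set $\overline{O_r}$ as a neighbourhood of $T$. As $T\in\mathcal{T}(P)$ was arbitrary, every point has a compact neighbourhood and hence $\mathcal{T}(P)$ is locally compact.

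I do not anticipate a real obstacle here --- the substantive work has already been done in \autoref{O_FSubset} and \autoref{Crep}. The one point needing care is to choose $r$ \emph{strictly} below an element of $T$, rather than taking an arbitrary element of $T$, since \eqref{Shrinking} (and hence \autoref{Crep}) requires the relation $\prec$ to hold and not mere membership; this is exactly where the roundness built into tightness gets used. One could alternatively run the same argument through \autoref{O_FSubset} with $F=\{r\}$, using $\widehat{\{r\}}=r^\succ$, but \autoref{Crep} fits the conclusion of \eqref{Shrinking} most directly.
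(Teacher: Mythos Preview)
Your proof is correct and follows essentially the same route as the paper: pick $s\prec t$ in $T$ by roundness, then use the $\C$/$\Subset$ correspondence to produce a compact neighbourhood $\overline{O_s}$ of $T$. The only cosmetic difference is that the paper invokes \eqref{FprecQ} to get $s\C t$ and applies \autoref{O_FSubset} directly, obtaining $O_s\Subset O_t$, whereas you route through \eqref{Shrinking} and \autoref{Crep} to land in $\bigcup_{q\in t^\succ}O_q$; either works.
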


\begin{proof}
Any $T\in\mathcal{T}(P)$ is non-empty and round, so we have $s,t\in T$ with $s\prec t$.  By \eqref{FprecQ}, $s\C t$ so $T\in O_s\Subset O_t$, by \autoref{O_FSubset}.  In particular, $T$ has a compact neighbourhood so, as $T$ was arbitrary, $\mathcal{T}(P)$ is locally compact.
\end{proof}

For $\mathcal{T}(P)$ to be locally compact as above, \eqref{Shrinking} is crucial, as the following example shows (this answers a question posed by Exel).

\begin{xpl}\label{ExelQuestion}
Let $P$ be the collection of all dyadic open intervals in $\mathbb{R}$, i.e.
\[P=\{(\tfrac{k}{2^n},\tfrac{k+1}{2^n}):k,n\in\mathbb{Z}\}.\]
Considering $(P,\Subset)$, we see that $\Subset$ is transitive and $P$ is round but \eqref{Shrinking} fails so $(P,\Subset)$ is not an abstract pseudobasis.  Moreover, $\mathcal{T}(P)$ can be identified with the complement $\mathbb{R}\setminus D$ of the dyadic numbers $D\subseteq\mathbb{R}$.  As both $D$ and $\mathbb{R}\setminus D$ are dense in $\mathbb{R}$, all compact subsets of $\mathbb{R}\setminus D$ have empty interior and hence $\mathcal{T}(P)\approx\mathbb{R}\setminus D$ is not locally compact.
\end{xpl}

\subsection{Patch Topologies}
We are primarily concerned with pseudobases (to be introduced in the next section) in locally compact Hausdorff spaces.  However, it will be useful to first consider more general pseudosubbases and how they relate to more general stably locally compact $T_0$ spaces via the patch construction.

First, for any topological space $X$, let $\mathcal{O}(X)$ denote the open subsets of $X$ and let $\mathcal{C}(X)$ denote the compact saturated subsets of $X$, i.e. the compact subsets $C\subseteq X$ that are also intersections of open sets so
\[C=\bigcap_{C\subseteq O\in\mathcal{O}(X)}O.\]
Note that if $X$ is Hausdorff or even $T_1$ then arbitrary subsets are saturated so $\mathcal{C}(X)$ in this case just consists of arbitrary compact subsets.

\begin{dfn}\label{LCdefinitions}
A topological space $X$ is
\begin{enumerate}
\item \emph{locally compact} if each point has a neighbourhood base of compact sets.
\item \emph{coherent} if $X$ is locally compact and $\mathcal{C}(X)$ is closed under $\cap$, i.e.
\[\tag{$\cap$-Closed}O,N\in\mathcal{C}(X)\quad\Rightarrow\quad O\cap N\in\mathcal{C}(X).\]
\item \emph{stably locally compact} if $X$ is coherent and, for $O\in\mathcal{O}(X)$ and $\mathcal{C}\subseteq\mathcal{C}(X)$
\[\label{WellFiltered}\tag{Well-Filtered}\bigcap\mathcal{C}\subseteq O\quad\Rightarrow\quad\exists\text{ finite }F\subseteq\mathcal{C}\ (\bigcap F\subseteq O).\]
\end{enumerate}
\end{dfn}

\begin{rmk}\label{StablyLocallyCompactSubset}
These are the standard definitions \textendash\, see \cite[Definitions 4.8.1, 5.2.21 and 8.3.31]{Goubault2013} and \cite[Definitions O-5.9, VI-6.2 and VI-6.7]{GierzHofmannKeimelLawsonMisloveScott2003}.  Equivalent definitions can be stated using $\Subset$ on $\mathcal{O}(X)$ instead, specifically $X$ is
\begin{enumerate}
\item \emph{locally compact} if $\{O\in\mathcal{O}(X):x\in O\}$ is $\Subset$-round, for each $x\in X$.
\item \emph{stably locally compact} if, for $O,O',N,N'\in\mathcal{O}(X)$ and $\Subset$-round $\mathcal{O}\subseteq\mathcal{O}(X)$,
\begin{align*}
O'\Subset O\ \text{and}\ N'\Subset N\quad&\Rightarrow\quad O'\cap N'\Subset O\cap N,\quad\text{and}\\
\bigcap\mathcal{O}\subseteq O\quad&\Rightarrow\quad\exists\text{ finite }F\subseteq\mathcal{O}\ (\bigcap F\subseteq O).
\end{align*}
\end{enumerate}
(see \cite[Proposition VI-7.71]{GierzHofmannKeimelLawsonMisloveScott2003} and \cite[Theorem 8.3.34]{Goubault2013}).
Also, among locally compact $T_0$ spaces, the directed version of \eqref{WellFiltered} is equivalent to being sober \textendash\, see \cite[Propositions 8.3.5 and 8.3.8]{Goubault2013}.
\end{rmk}

In a stably locally compact space $X$, both $\mathcal{C}(X)$ and $\Subset$ behave like they would in a Hausdorff space.  Indeed, in this case $\mathcal{C}(X)$ and $\Subset$ are stable under the patch construction, which makes $X$ Hausdorff, as long as $X$ was originally $T_0$.

Specifically, recall that the \emph{patch topology} of a topological space is the topological join of $\mathcal{O}(X)$ and $\mathcal{C}(X)^c$, i.e. the topology generated by all open sets and all complements of compact saturated sets (see \cite[Definition V-5.11]{GierzHofmannKeimelLawsonMisloveScott2003} or \cite[Definition 9.1.26]{Goubault2013}).  We denote $X$ with its patch topology by $X^\mathrm{patch}$ so
\[\mathcal{O}(X^\mathrm{patch})=\mathcal{O}(X)\vee\mathcal{C}(X)^c.\]

\begin{prp}
If $X$ is a stably locally compact $T_0$ space then $X^\mathrm{patch}$ is a locally compact Hausdorff space such that $\mathcal{C}(X)\subseteq\mathcal{C}(X^\mathrm{patch})$ and, for all $O,N\in\mathcal{O}(X)$,
\begin{equation}\label{SubsetPatch}
O\Subset N\text{ in }X\qquad\Leftrightarrow\qquad O\Subset N\text{ in }X^\mathrm{patch}.
\end{equation}
\end{prp}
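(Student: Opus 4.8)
The plan is to prove the three assertions in turn, leaning throughout on two elementary facts about saturations: the saturation $\uparrow\!K=\bigcap\{U\in\mathcal{O}(X):K\subseteq U\}$ of a compact set $K\subseteq X$ is again compact (a finite subcover of $K$ has open union containing $\uparrow\!K$) and saturated, and $K\subseteq O\in\mathcal{O}(X)$ forces $\uparrow\!K\subseteq O$. For \emph{Hausdorffness of $X^{\mathrm{patch}}$}, take distinct points $x,y$; the $T_0$ hypothesis gives (after possibly swapping them) some $O\in\mathcal{O}(X)$ with $x\in O$ and $y\notin O$, and local compactness makes $\{O'\in\mathcal{O}(X):x\in O'\}$ $\Subset$-round by \autoref{StablyLocallyCompactSubset}, so there are $O'\in\mathcal{O}(X)$ and compact $K$ with $x\in O'\subseteq K\subseteq O$. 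Then $C:=\uparrow\!K\in\mathcal{C}(X)$ satisfies $O'\subseteq C\subseteq O$, so the patch-open sets $O'$ and $X\setminus C$ are disjoint neighbourhoods of $x$ and $y$ respectively.

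The crux is $\mathcal{C}(X)\subseteq\mathcal{C}(X^{\mathrm{patch}})$; since $X^{\mathrm{patch}}$ is Hausdorff, it suffices to show every $C\in\mathcal{C}(X)$ is patch-compact, and by the Alexander subbasis theorem I need only extract finite subcovers from covers by patch-subbasic sets. So suppose $C\subseteq\bigcup_i O_i\cup\bigcup_j(X\setminus C_j)$ with $O_i\in\mathcal{O}(X)$ and $C_j\in\mathcal{C}(X)$, so that $\bigcap(\{C\}\cup\{C_j\})\subseteq\bigcup_i O_i$. Applying \eqref{WellFiltered} to this family and the open set $\bigcup_i O_i$ produces $C_{j_1},\dots,C_{j_m}$ with $C':=C\cap C_{j_1}\cap\dots\cap C_{j_m}\subseteq\bigcup_i O_i$, and $C'\in\mathcal{C}(X)$ by coherence. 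Compactness of $C'$ in $X$ gives $C'\subseteq O_{i_1}\cup\dots\cup O_{i_k}$, and then $O_{i_1},\dots,O_{i_k},X\setminus C_{j_1},\dots,X\setminus C_{j_m}$ cover $C$, since any point of $C$ lying in all of $C_{j_1},\dots,C_{j_m}$ lies in $C'$. (The degenerate cases where $\{O_i\}$ or $\{C_j\}$ is empty work the same way.)

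With this in hand, local compactness of $X^{\mathrm{patch}}$ is immediate: a compact $X$-neighbourhood $K$ of a point $x$ gives $x\in\mathrm{int}_X(K)\subseteq\mathrm{int}_X(\uparrow\!K)$ with $\uparrow\!K\in\mathcal{C}(X)\subseteq\mathcal{C}(X^{\mathrm{patch}})$ patch-compact, so $\uparrow\!K$ is a compact patch-neighbourhood of $x$, and since $X^{\mathrm{patch}}$ is Hausdorff this suffices. For \eqref{SubsetPatch}, note $O,N\in\mathcal{O}(X)\subseteq\mathcal{O}(X^{\mathrm{patch}})$: if $O\Subset N$ in $X$, choose compact $K$ with $O\subseteq K\subseteq N$; then $\uparrow\!K\in\mathcal{C}(X)$ is patch-compact and $O\subseteq\uparrow\!K\subseteq N$, so $O\Subset N$ in $X^{\mathrm{patch}}$. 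Conversely, the patch topology refines that of $X$, so every patch-compact set is $X$-compact, whence a patch-compact witness of $O\Subset N$ in $X^{\mathrm{patch}}$ is also a witness in $X$.

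I expect the only real obstacle to be the middle step, that compact saturated sets survive passage to the patch: this is precisely where coherence and \eqref{WellFiltered} — i.e. the full strength of stable local compactness, not merely local compactness — are used, and it requires a little care in tracking the two kinds of subbasic sets through the Alexander argument and in the degenerate empty-index cases. The remaining steps are formal bookkeeping with saturations and the observation that the patch topology is finer than the original.
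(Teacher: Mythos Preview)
Your proof is correct. The overall structure matches the paper's (Hausdorff, then $\mathcal{C}(X)\subseteq\mathcal{C}(X^{\mathrm{patch}})$, then local compactness, then \eqref{SubsetPatch}), and the arguments for local compactness and for \eqref{SubsetPatch} are essentially identical to the paper's, including the saturation trick. The two proofs diverge in how they handle the first two items. For Hausdorffness the paper simply cites \cite[Proposition 9.1.12 and Lemma 9.1.31]{Goubault2013}, whereas you give a direct separation argument using $T_0$ plus local compactness plus saturation. For $\mathcal{C}(X)\subseteq\mathcal{C}(X^{\mathrm{patch}})$ the paper argues that coherence makes the subspace-then-patch and patch-then-subspace topologies on any $C\in\mathcal{C}(X)$ agree, and then cites \cite[Proposition 9.1.27]{Goubault2013} to conclude that $C^{\mathrm{patch}}$ is compact; by contrast, you run a self-contained Alexander subbasis argument that exposes precisely how \eqref{WellFiltered} (to cut the $\{C_j\}$ family down to finitely many) and coherence (to ensure $C'=C\cap C_{j_1}\cap\dots\cap C_{j_m}$ is compact) are each needed. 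Your route is more elementary and makes the role of the stably-locally-compact hypotheses transparent; the paper's route is shorter but leans on external references.
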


\begin{proof}
First note that the patch topology of any locally compact $T_0$ space is Hausdorff, by \cite[Proposition 9.1.12 and Lemma 9.1.31]{Goubault2013}.

For any $C,D\in\mathcal{C}(X)$, $C\cap D\in\mathcal{C}(X)$, as $X$ is coherent.  Thus $C^\mathrm{patch}$, the patch topology coming from the subspace topology of $C$, coincides with the subspace topology coming from $X^\mathrm{patch}$.  By \cite[Proposition 9.1.27]{Goubault2013}, $C^\mathrm{patch}$ is compact so $C$ is compact in $X^\mathrm{patch}$ and hence $C\in\mathcal{C}(X^\mathrm{patch})$, as $X$ is Hausdorff.

As $X$ is locally compact, every $x\in X$ has a neighbourhood in $\mathcal{C}(X)\subseteq\mathcal{C}(X^\mathrm{patch})$.  As $X^\mathrm{patch}$ is Hausdorff, it follows that $X^\mathrm{patch}$ is also locally compact.

Now take any $O,N\in\mathcal{O}(X)$.  If $O\Subset N$ in $X^\mathrm{patch}$ then this remains true in $X$ because any compact subset of $X^\mathrm{patch}$ is certainly compact in the coarser original topology of $X$.  Conversely, if $O\Subset N$ in $X$ then we have compact $C$ in $X$ with $O\subseteq C\subseteq N$.  But the saturation of $C$ is also compact, by \cite[Proposition 4.4.14]{Goubault2013}, and is still contained in the open set $N$.  Thus we may take $C\in\mathcal{C}(X)\subseteq\mathcal{C}(X^\mathrm{patch})$ and hence $O\Subset N$ in $X^\mathrm{patch}$.
\end{proof}

Conversely, to produce general stably locally compact $T_0$ spaces from locally compact Hausdorff spaces, we consider pseudosubbases.

\begin{dfn}
We call $P\subseteq\mathcal{O}(X)$ a \emph{pseudosubbasis} of a space $X$ if
\begin{align}
\label{Separating'}\tag{Separating}&\forall x,y\in X\ \exists O\in P\ (x\neq y\quad\Rightarrow\quad x\notin O\ni y\ \ \text{or}\ \ y\notin O\ni x).\\
\label{capPointRound}\tag{$\cap$-Point-Round}&\forall\text{ finite }F\subseteq P\ \forall x\in\bigcap F\ \exists\text{ finite }G\subseteq P\ (x\in\bigcap G\Subset\bigcap F).
\end{align}
\end{dfn}

\begin{rmk}
Recall that a \emph{subbasis} $S$ of a space $X$ is a collection of open sets that generates the topology of $X$, i.e. such that every element of $\mathcal{O}(X)$ is a union of finite intersections of elements of $S$.  In particular, every subbasis of a locally compact $T_0$ space is a pseudosubbasis.  Conversely, if $X$ has a pseudosubbasis then $X$ must be both locally compact and $T_0$.
\end{rmk}

\begin{rmk}
By \eqref{Separating}, any pseudosubbasis $P$ must cover all except possibly one point of $X$, i.e. $|X\setminus\bigcup P|\leq 1$.  Taking $F=\emptyset$ in \eqref{capPointRound} we see that, for any $x\in X=\bigcap\emptyset$, we have finite $G\subseteq P$ with $x\in\bigcap G\Subset X$.  If $X$ is compact then we can always take $G=\emptyset$ too but otherwise $G$ must be non-empty, i.e. if $X$ is not compact then $P$ must truly cover $X$.
\end{rmk}

\begin{prp}
Any pseudosubbasis of a stably locally compact space $X$ is a pseudosubbasis of $X^\mathrm{patch}$.
\end{prp}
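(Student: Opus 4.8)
The plan is to observe that this proposition is, in essence, a bookkeeping consequence of the preceding one. Both $X$ and $X^\mathrm{patch}$ have the same underlying point set, and $P$ is literally the same family of open sets in both (since the patch topology refines the original, $P\subseteq\mathcal{O}(X)\subseteq\mathcal{O}(X^\mathrm{patch})$). The only topological ingredient in the definition of a pseudosubbasis is the relation $\Subset$ applied to finite intersections of members of $P$, and \eqref{SubsetPatch} tells us precisely that this relation is unchanged when we pass to the patch topology. So the whole proof reduces to applying \eqref{SubsetPatch} and noting that everything else is purely set-theoretic.

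In detail, I would first record that a pseudosubbasis of $X$ forces $X$ to be $T_0$ (the remark following \autoref{LCdefinitions}), so that the hypotheses of the preceding proposition are satisfied and \eqref{SubsetPatch} is available for all $O,N\in\mathcal{O}(X)$. Next, \eqref{Separating'} refers only to points of $X$ and their membership in members of $P$, with no mention of the topology at all; since $X^\mathrm{patch}$ has the same points and $P$ is the same family, \eqref{Separating'} holds verbatim for $X^\mathrm{patch}$. For \eqref{capPointRound}, I would fix a finite $F\subseteq P$ and a point $x\in\bigcap F$, use the hypothesis to obtain a finite $G\subseteq P$ with $x\in\bigcap G\Subset\bigcap F$ in $X$, and then note that $\bigcap F$ and $\bigcap G$ are finite intersections of open sets, hence lie in $\mathcal{O}(X)$; thus \eqref{SubsetPatch} applies and yields $\bigcap G\Subset\bigcap F$ in $X^\mathrm{patch}$, while the membership statements $x\in\bigcap G$ and $x\in\bigcap F$ are unaffected since the point set is unchanged. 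Hence \eqref{capPointRound} holds for $X^\mathrm{patch}$.

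The only point that needs a word of care is the degenerate case $F=\emptyset$ (or $G=\emptyset$), where the relevant intersection is $X=\bigcap\emptyset$ itself; but $X\in\mathcal{O}(X)$, so \eqref{SubsetPatch} still applies there, and there is no real obstacle. Indeed, I do not expect any hard step in this proof at all: the substantive work — that the patch topology of a stably locally compact $T_0$ space is locally compact Hausdorff and that it preserves $\Subset$ on open sets — was already carried out in the preceding proposition, and the present statement is simply the observation that neither the underlying set nor the distinguished family $P$ is disturbed by the patch construction.
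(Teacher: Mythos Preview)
Your proposal is correct and takes essentially the same approach as the paper: both reduce the claim to \eqref{SubsetPatch}, the paper's proof being the single line ``Immediate because $\Subset$ is the same on $\mathcal{O}(X)$ and $\mathcal{O}(X^\mathrm{patch})$, by \eqref{SubsetPatch}.'' Your version is simply a more careful unpacking of this, and you even make explicit the $T_0$ hypothesis needed to invoke the preceding proposition (which the paper leaves implicit, relying on the earlier remark that any space with a pseudosubbasis is automatically $T_0$).
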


\begin{proof}
Immediate because $\Subset$ is the same on $\mathcal{O}(X)$ and $\mathcal{O}(X^\mathrm{patch})$, by \eqref{SubsetPatch}.
\end{proof}

In particular, any true subbasis of a stably locally compact $T_0$ space $X$ is a pseudosubbasis of the locally compact Hausdorff space $X^\mathrm{patch}$.  Conversely, we now show that any pseudosubbasis of a locally compact Hausdorff space generates a stably locally compact $T_0$ topology from which the original topology can be recovered from the patch construction.  Thus we have the following duality:
\begin{gather}
\nonumber\text{pseudosubbases of locally compact Hausdorff spaces}\\
\label{pseudosubDuality}\updownarrow\\
\nonumber\text{subbases of stably locally compact $T_0$ spaces}.
\end{gather}

Given $P\subseteq\mathcal{P}(X)$, let $X_P$ denote $X$ with the topology generated by $P$.

\begin{prp}\label{PseudoPatch}
If $P$ is a pseudosubbasis of a locally compact Hausdorff space $X$ then $X_P$ is a stably locally compact $T_0$ space such that $X=X_P^\mathrm{patch}$.
\end{prp}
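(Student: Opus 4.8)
The plan is to establish, in turn, that $X_P$ is $T_0$, that $X_P$ is stably locally compact, and finally that $X=X_P^\mathrm{patch}$, with almost all of the work in the last. Write $\bigcap G$ for a finite intersection of members of $P$ (so these are the basic $X_P$-open sets) and $\Subset_X$, $\Subset_{X_P}$ for compact containment computed in $\mathcal{O}(X)$ and in $\mathcal{O}(X_P)$. The $T_0$ axiom is immediate from the separating condition \eqref{Separating'}. The key preliminary observation is that $\Subset$ does not depend on which of the two topologies it is computed in: for $O,N\in\mathcal{O}(X_P)$ one has $O\Subset_X N$ iff $O\Subset_{X_P}N$. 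The forward direction is trivial since $X$-compact sets are $X_P$-compact. For the converse, let $C$ be $X_P$-compact with $O\subseteq C\subseteq N$; writing $N$ as a union of basic sets, each point of $C$ lies in some basic $\bigcap F\subseteq N$, and \eqref{capPointRound} produces a finite $G\subseteq P$ with that point in $\bigcap G\Subset_X\bigcap F$; a finite subcover of $C$ by such $\bigcap G$'s, each enlarged to an $X$-compact subset of $N$, unions to an $X$-compact set between $O$ and $N$. (Taking $F=\emptyset$, the same argument also shows every $X_P$-compact set lies inside an $X$-compact one.)

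Granting this lemma, the $\Subset$-formulations of local and stable local compactness in \autoref{StablyLocallyCompactSubset} become routine. For local compactness, if $x\in O\in\mathcal{O}(X_P)$ then $O$ contains a basic $\bigcap F\ni x$ and \eqref{capPointRound} gives $x\in\bigcap G\Subset_X\bigcap F\subseteq O$, so $\bigcap G\Subset_{X_P}O$ by the lemma, witnessing $\Subset_{X_P}$-roundness of $\{O\in\mathcal{O}(X_P):x\in O\}$. For the two conditions of stable local compactness I transfer everything into $X$ via the lemma and use that $X$ is Hausdorff: $X$-compact sets are $X$-closed, so if $O'\subseteq K\subseteq O$ and $N'\subseteq L\subseteq N$ with $K,L$ $X$-compact, then $K\cap L$ is $X$-compact and lies between $O'\cap N'$ and $O\cap N$; and given $\Subset_{X_P}$-round $\mathcal{O}\subseteq\mathcal{O}(X_P)$ with $\bigcap\mathcal{O}\subseteq O$, choosing $W(V)\in\mathcal{O}$ with $W(V)\subseteq K_V\subseteq V$ and $K_V$ $X$-compact gives $\bigcap_{V\in\mathcal{O}}K_V=\bigcap\mathcal{O}\subseteq O$ (the nontrivial inclusion uses $W(V)\in\mathcal{O}$), and covering a single $K_{V_1}$ by the $X$-open family $\{X\setminus K_V:V\in\mathcal{O}\}\cup\{O\}$ and extracting a finite subcover shows finitely many of the $W(V)$ already have intersection inside $O$.

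For $X=X_P^\mathrm{patch}$ it suffices to prove $\mathcal{O}(X)=\mathcal{O}(X_P)\vee\mathcal{C}(X_P)^c$. One inclusion is easy: $\mathcal{O}(X_P)\subseteq\mathcal{O}(X)$ by hypothesis, and each $C\in\mathcal{C}(X_P)$ is $X$-closed, for if $x\notin C$ then saturatedness of $C$ gives $O\in\mathcal{O}(X_P)$ with $C\subseteq O\not\ni x$, and then the argument of the lemma (compactness of $C$ plus \eqref{capPointRound}) produces an $X$-compact $K$ with $C\subseteq K\subseteq O$, so $X\setminus K$ is an $X$-open neighbourhood of $x$ disjoint from $C$. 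The reverse inclusion $\mathcal{O}(X)\subseteq\mathcal{O}(X_P^\mathrm{patch})$ is the substance of the proof. Fix $X$-open $U$ and $x\in U$; I shall produce an $X_P^\mathrm{patch}$-open neighbourhood of $x$ of the form $W=\bigcap G^{*}\cap(X\setminus C)$ inside $U$, where $G^{*}\subseteq P$ is finite, $x\in\bigcap G^{*}$, $C\in\mathcal{C}(X_P)$, $x\notin C$, and $\bigcap G^{*}\setminus U\subseteq C$. The obstruction is that the only compact saturated sets of $X_P$ one can manufacture are $X_P$-saturations (intersections of all containing $X_P$-opens) of $X$-compact sets $L$, and $x$ joins such a saturation the moment $L$ meets the set $\overline{\{x\}}$ (closure in $X_P$), which is $X$-closed but may be large; so one must first arrange $\overline{\{x\}}\cap\bigcap G^{*}\subseteq U$.

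To do so, use \eqref{capPointRound} with $F=\emptyset$ to pick a basic $\bigcap G_0\ni x$ whose $X$-closure $B$ is $X$-compact, and set $K=\overline{\{x\}}\cap(X\setminus U)\cap B$, which is $X$-compact and misses $x$. For each $z\in K$, the separating condition \eqref{Separating'} must separate $z$ from $x$ by some $O_z\in P$ with $x\in O_z\not\ni z$ — the alternative $x\notin O_z\ni z$ being impossible since $z\in\overline{\{x\}}$ — and \eqref{capPointRound} then yields a finite $H_z\subseteq P$ with $x\in\bigcap H_z$ and $z\notin\overline{\bigcap H_z}^{X}$; covering the $X$-compact $K$ by the $X$-open sets $X\setminus\overline{\bigcap H_z}^{X}$ and passing to a finite subcover $z_1,\dots,z_m$, put $G^{*}=G_0\cup H_{z_1}\cup\dots\cup H_{z_m}$. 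Then $x\in\bigcap G^{*}$, $\overline{\bigcap G^{*}}^{X}\subseteq B$, and $\overline{\bigcap G^{*}}^{X}\cap K=\emptyset$, so $L^{*}:=\overline{\bigcap G^{*}}^{X}\setminus U$ is $X$-compact, disjoint from $\overline{\{x\}}$, and contains $\bigcap G^{*}\setminus U$; its $X_P$-saturation $C$ is therefore compact saturated in $X_P$, misses $x$, and contains $\bigcap G^{*}\setminus U$, so $W=\bigcap G^{*}\setminus C\subseteq\bigcap G^{*}\cap U\subseteq U$ is as desired. I expect this last construction to be the one real difficulty — forcing a single compact saturated set of $X_P$ to swallow the boundary part $\bigcap G^{*}\setminus U$ while staying away from $x$, which is what compels the preliminary shrinking of $\bigcap G_0$ to $\bigcap G^{*}$; the rest is unwinding the $\Subset$-criteria of \autoref{StablyLocallyCompactSubset} and using Hausdorffness of $X$.
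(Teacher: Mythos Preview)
Your proof is correct, but it diverges from the paper's in two places.

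First, the paper isolates a slightly stronger key lemma: it shows directly that $\mathcal{C}(X_P)\subseteq\mathcal{C}(X)$ (every $X_P$-compact saturated set is $X$-compact), by writing each $C\in\mathcal{C}(X_P)$ as a $\Subset$-round intersection of finite unions of elements of $P$ and hence as an intersection of $X$-compacta. Your $\Subset_X=\Subset_{X_P}$ lemma follows immediately from this, but the paper's form is what drives its endgame.

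Second, and more significantly, the paper's proof of $X=X_P^{\mathrm{patch}}$ is a one-line compact--Hausdorff trick rather than your explicit construction. Since $\mathcal{C}(X_P)\subseteq\mathcal{C}(X)$, the identity $X\to X_P^{\mathrm{patch}}$ is continuous; since $X_P^{\mathrm{patch}}$ is Hausdorff (general theory for stably locally compact $T_0$ spaces), the restriction of the identity to any $X$-compact $\overline{N}$ is a closed map, hence a homeomorphism on $N$; since every point has such an $N\in P$ which is also $X_P^{\mathrm{patch}}$-open, the identity is a bijective local homeomorphism. Your route instead builds, for each $x\in U\in\mathcal{O}(X)$, an explicit patch-open $\bigcap G^*\setminus C$ inside $U$ by first shrinking $\bigcap G_0$ so that $\overline{\{x\}}^{X_P}\cap\overline{\bigcap G^*}\subseteq U$ and then taking the $X_P$-saturation of the $X$-compact remainder $\overline{\bigcap G^*}\setminus U$. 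Both work; the paper's argument is shorter and avoids touching the specialization order or saturations of concrete sets, while yours is self-contained (it does not invoke the Hausdorffness of $X_P^{\mathrm{patch}}$ from outside) and makes visible exactly which patch-open sets witness the equality.
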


\begin{proof}
First replace $P$ with its closure under finite intersections.  Note $P$ is still a pseudosubbasis of $X$ and now also a basis of $X_P$.

We claim that $\mathcal{C}(X_P)\subseteq\mathcal{C}(X)$.  To see this, take any $C\in\mathcal{C}(X_P)$ and $x\in X\setminus C$.  As $C$ is saturated in $X_P$, we have $O\in\mathcal{O}(X_P)$ with $x\notin O\supseteq C$.  For every $c\in C$, we have $N\in P$ with $c\in N\subseteq O$.  As $C$ is compact in $X_P$, we thus have finite $F\subseteq P$ with $C\subseteq\bigcup F\subseteq O$.  As $x$ was arbitrary, this shows that $C=\bigcap_{F\in\Gamma}\bigcup F$, where $\Gamma$ is the collection of all finite $F\subseteq P$ with $C\subseteq\bigcup F$.  Again by compactness in $X_P$ and \eqref{capPointRound}, $\Gamma$ is $\Subset$-round (where $\Subset$ is defined from the original topology of $X$), i.e. for all $F\in\Gamma$, we have $G\in\Gamma$ with $G\subseteq F^\Supset$.  As $X$ is Hausdorff this means $C=\bigcap_{F\in\Gamma}\bigcup F=\bigcap_{F\in\Gamma}\overline{\bigcup F}$, where the closure is taken in $X$.  Thus $C$ is an intersection of compact subsets in the Hausdorff space $X$ so $C\in\mathcal{C}(X)$.

As in the proof of \eqref{SubsetPatch}, this means that, for any $O,N\in\mathcal{O}(X_P)$,
\begin{equation}
O\Subset N\text{ in }X_P\qquad\Leftrightarrow\qquad O\Subset N\text{ in }X.
\end{equation}
As stable local compactness can be characterised by $\Subset$ as in \autoref{StablyLocallyCompactSubset}, and $X$ is certainly stably locally compact (being Hausdorff), it follows that $X_P$ is also stably locally compact.  By \eqref{Separating}, $X_P$ is also $T_0$.

Thus $X_P^\mathrm{patch}$ is Hausdorff, by \cite[Proposition 9.1.12 and Lemma 9.1.31]{Goubault2013}.  As $\mathcal{C}(X_P)\subseteq\mathcal{C}(X)$, the identity $\mathbf{id}:X\rightarrow X_P^\mathrm{patch}$ is continuous.  For any $x\in X$, \eqref{capPointRound} yields $O,N\in P$ with $x\in N\Subset O$.  As $\overline{N}$ is compact and $X_P^\mathrm{patch}$ is Hausdorff, $\mathbf{id}$ takes closed subsets of $\overline{N}$ to closed subsets, i.e. $\mathbf{id}$ is a homeomorphism on $\overline{N}$ and hence on $N$.  As $N$ is also open in $X_P^\mathrm{patch}$, $\mathbf{id}$ is a local homeomorphism.  But $\mathbf{id}$ is clearly bijective and thus a global homeomorphism.
\end{proof}

\begin{rmk}
There is also an ordering induced on $X$ by $P$, namely
\[x\leq y\qquad\Leftrightarrow\qquad\forall O\in P\ (x\in O\ \Rightarrow\ y\in O).\]
As long as $P$ is a pseudosubbasis of a locally compact Hausdorff space, one can verify that $\leq$ is closed in $X\times X$ and thus turns $X$ into a pospace.  Moreover, the open up-sets in $X$ are then precisely the open sets in $X_P$ (and $\leq$ is just the specialisation order in $X_P$).  By \eqref{capPointRound}, the open up-sets $N^\uparrow_x$ containing any $x\in X$ will also be $\Subset$-round.  Thus an alternative way of viewing \eqref{pseudosubDuality} is
\begin{gather*}
\text{locally compact pospaces such that each $N_x^\uparrow$ is $\Subset$-round}\\
\updownarrow\\
\text{stably locally compact $T_0$ spaces}.
\end{gather*}
This generalises the duality between compact pospaces and stably compact spaces (see \cite[Proposition 9.1.27 and 9.1.34]{Goubault2013} and \cite[Proposition VI-6.23]{GierzHofmannKeimelLawsonMisloveScott2003}).
\end{rmk}

\subsection{Concrete Pseudobases}\label{ConcretePseudobases}

We have just seen in \autoref{PseudoPatch} that the topology of any locally compact Hausdorff space $X$ can be recovered from any pseudosubbasis.  However, this requires that we already know the points of $X$.  What we would like to do is recover both the points and the topology from the relational structure $(P,\Subset)$, for suitable $P\subseteq\mathcal{O}(X)$.  Pseudosubbases are too general for this as the following elementary examples show.

\begin{xpl}
Consider the following possibilities.
\begin{align*}
X&=\{x,y\}&P&=\{\emptyset,\{x\},\{y\},\{x,y\}\}.\\
X&=\{x,y,z\}&P&=\{\emptyset,\{x\},\{y\},\{x,y,z\}\}.
\end{align*}
In both cases, $P$ is a (even $\cap$-closed) pseudosubbasis for $X$, considered as a discrete space.  Moreover, the resulting relational structures $(P,\Subset)$ are immediately seen to be isomorphic, even though the spaces are certainly not homeomorphic, having different numbers of points.
\end{xpl}

This leads us to consider the following stronger structures.  Note \eqref{Dense} is the key extra condition distinguishing pseudobases from pseudosubbases.

\begin{dfn}
We call $P\subseteq\mathcal{O}(X)$ a \emph{(concrete) pseudobasis} of a space $X$ if
\begin{align}
\label{Cover}\tag{Cover}&\text{Every $x\in X$ is contained in some $O\in P$}.\\
\label{Separating}\tag{Separating}&\text{The subsets in $P$ distinguish the points of $X$}.\\
\label{PointRound}\tag{Point-Round}&\text{The neighborhoods in $P$ of any fixed $x\in X$ are $\Subset$-round}.\\
\label{Dense}\tag{Dense}&\text{Every non-empty $O\in\mathcal{O}(X)$ contains some non-empty $N\in P$}.
\end{align}
\end{dfn}

Our primary goal in this section is to exhibit a duality between abstract and concrete pseudobases.  First, however, we make some general observations.

To begin with, pseudobases are indeed generalisations of bases, at least when $X$ is locally compact and $T_0$.  Indeed, bases of locally compact spaces satisfy \eqref{PointRound}, while bases of $T_0$ spaces satisfy \eqref{Separating} and, by definition, bases of arbitrary spaces satisfy \eqref{Cover} and \eqref{Dense}.

\begin{xpl}
Consider the one-point compactification $X=\mathbb{N}\cup\{\infty\}$ of $\mathbb{N}$ and let $P=\{\{n\}:n\in\mathbb{N}\}\cup\{X\}$.  It is immediately verified that $P$ is a pseudobasis but not a basis, as it does not contain a neighbourhood base of the point $\infty$.
\end{xpl}

\begin{rmk}
Our previous notion of a pseudobasis from \cite[Definition 8.2]{BiceStarling2016} contained all the above axioms except \eqref{PointRound}.  Indeed, \eqref{PointRound} holds trivially for the compact pseudobases considered in \cite{BiceStarling2016}.  One immediate consequence of \eqref{PointRound} worth noting is that $\subseteq$ coincides with the lower order defined from $\Subset$, i.e. for all $O,N\in P$,
\[\tag{Lower Order}O\subseteq N\qquad\Leftrightarrow\qquad\forall M\in P\ (M\Subset O\ \ \Rightarrow\ \ M\Subset N).\]
\end{rmk}

\begin{rmk} there are several equivalent ways of stating the pseudobasis axioms.  For example, $P\subseteq\mathcal{O}(X)$ is a pseudobasis if and only if the following are satisfied.
\begin{align}
\label{Cover+Round}\tag{Cover+Point-Round}O&=\bigcup_{P\ni N\Subset O}N,\text{ for all }O\in P\cup\{X\}.\\
\label{Dense'}\tag{Dense}\overline{O}&=\overline{\bigcup_{P\ni N\Subset O}N},\text{ for all }O\in\mathcal{O}(X).\\
\tag{Separating}\{x\}&=\bigcap_{x\in O\in P}O\cap\bigcap_{x\notin O\in P}X\setminus O,\text{ for all }x\in X.
\end{align}
We will primarily be concerned with pseudobases of non-empty sets.  In this case, we can set $N_x=\{O\in P:x\in O\}$ and concisely state the pseudobasis axioms as
\begin{align}
\tag{Non-Empty Open+Dense}&\emptyset\notin P\subseteq\mathcal{O}(X)\subseteq P^\Subset.\\
\tag{Cover+Separating+Round}&\emptyset\neq N_x\neq N_y\subseteq N_y^\Subset,\quad\text{for all distinct $x,y\in X$}.
\end{align}
\end{rmk}

Now for the promised duality between abstract and concrete pseudobases.  First we represent abstract pseudobases as concrete pseudobases.

\begin{prp}\label{abstract->concrete}
If $(P,\prec)$ is an abstract pseudobasis then $(O_p)_{p\in P}$ is a concrete pseudobasis of $\mathcal{T}(P)$.
\end{prp}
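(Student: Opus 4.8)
The claim has four parts to verify, corresponding to the four pseudobasis axioms \eqref{Cover}, \eqref{Separating}, \eqref{PointRound}, \eqref{Dense}, plus the implicit facts that each $O_p$ is open in $\mathcal{T}(P)$ (true by definition, since $O_p = O_{\{p\}}^\emptyset$ is a basic open set) and that $\emptyset \notin (O_p)_{p\in P}$ (which is \eqref{Opnonempty}). Most of the work has already been done in the preceding results; the proof should mostly be a matter of quoting them.

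\begin{proof}
Each $O_p$ is a basic open subset of $\mathcal{T}(P)$ and is non-empty by \eqref{Opnonempty}, while $\mathcal{O}(\mathcal{T}(P))\subseteq(O_p)_{p\in P}^\Subset$ follows from \autoref{O_FSubset}: indeed any basic open $O_F^G$ is covered by the $O_s\Subset O_F^G$ it contains, which exist because any $T\in O_F^G$ is round. This yields the combined \eqref{Cover}, \eqref{Separating}, \eqref{PointRound} axioms in the concise form as follows. For $T\in\mathcal{T}(P)$, set $N_T=\{p\in P:T\in O_p\}=T^\succ\cap P$; since $T$ is round and non-empty, $N_T\neq\emptyset$, giving \eqref{Cover}. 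If $T\neq U$ in $\mathcal{T}(P)$, then (say) $t\in T\setminus U$, and roundness of $T$ gives $s\prec t$ with $s\in T$, whence $T\in O_s$ but $U\notin O_s$ by the argument in \autoref{TightHausdorff} (as $U\not\CC P\setminus U$ forces $s\not\C P\setminus U$ when $s\in U$, but here $s\notin U$ follows since $U$ is upwards closed and $t\notin U$ — actually $U\in O^s$, so $N_T\neq N_U$); this gives \eqref{Separating}. For \eqref{PointRound}, if $T\in O_t$ then roundness of $T$ gives $s\in T$ with $s\prec t$, so $T\in O_s$ and, by \eqref{FprecQ} and \autoref{O_FSubset}, $O_s\Subset O_t$; thus $N_T$ is $\Subset$-round.

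It remains to verify \eqref{Dense}, i.e. that every non-empty open $V\subseteq\mathcal{T}(P)$ contains some non-empty $O_p$. Any such $V$ contains a non-empty basic open set $O_F^G$, and by \autoref{OFGempty} the non-emptiness of $O_F^G$ means that not every $\C$-cover of $G$ is a $\C$-cover of $\widehat{F}$; in particular $G\not\C\widehat{F}$ fails in the appropriate sense, so there is $p$ with $F\succ p$ and $p\perp G$, hence (by roundness, replacing $p$ by something $\prec$-below it) we may arrange $p\in\widehat{F}^\succ$ and $O_p\subseteq O_F\cap O^G=O_F^G$ — the inclusion $O_p\subseteq O^G$ holding because $p\perp G$ forces $G\subseteq\{p\}^\perp\subseteq P\setminus T$ with $G\C P\setminus T$ for any tight $T\ni p$, by a \eqref{Shrinking}/\eqref{RCQ} argument as in \autoref{OFGempty}. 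Since $O_p\neq\emptyset$ by \eqref{Opnonempty}, this establishes \eqref{Dense}.
\end{proof}

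\textbf{The main obstacle.} The genuinely delicate point is \eqref{Dense}: extracting a single $p\in P$ with $O_p\subseteq O_F^G$ from the mere non-emptiness of $O_F^G$. The issue is that $G\C P\setminus T$ is strictly stronger than $G\subseteq P\setminus T$, so finding $p\in\widehat F^\succ$ with $p\notin T$ for the bad $T$'s is not enough; one needs $p\perp G$, which then propagates to $O_p\subseteq O^G$ via the \eqref{Shrinking}-based argument already packaged inside \autoref{OFGempty}. I would lean heavily on re-running (or directly citing the internals of) the proof of \autoref{OFGempty}, where exactly such a $p$ with $F\succ p\perp G'$ (for a shrunk cover $G'$ of $G$) is produced. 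Everything else is a routine assembly of \eqref{Opnonempty}, \eqref{FprecQ}, \autoref{O_FSubset}, \autoref{TightHausdorff} and the roundness of tight subsets.
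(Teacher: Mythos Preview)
Your treatment of \eqref{Cover}, \eqref{Separating} and \eqref{PointRound} is correct and essentially matches the paper (the paper's \eqref{Separating} is simpler: $t\in T\setminus U$ directly gives $T\in O_t\not\ni U$, with no need for roundness or $O^s$).

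For \eqref{Dense}, your written argument has two gaps. First, from $O_F^G\neq\emptyset$ and \autoref{OFGempty} you obtain only that some $\C$-cover $H$ of $G$ fails to $\C$-cover $\widehat F$; the existence of $p$ with $F\succ p\perp G$ does not follow from this statement alone (and the phrase ``$G\not\C\widehat F$ fails in the appropriate sense'' is not meaningful). Second, even granting such a $p$, the implication ``$p\perp G$ and $p\in T$ tight $\Rightarrow G\C P\setminus T$'' is not valid as a bare fact: $p\perp G$ only gives $G\subseteq P\setminus T$, whereas $G\C P\setminus T$ requires a finite $H$ with $G\D H\prec P\setminus T$. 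Your vague appeal to ``a \eqref{Shrinking}/\eqref{RCQ} argument'' does not supply this $H$.

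The paper's route, which your obstacle paragraph correctly anticipates but your proof body does not execute, is: take any $T\in O_F^G$, so $G\C P\setminus T$; by \eqref{RCQ} shrink to finite $H$ with $G\C H\prec P\setminus T$; then \eqref{Tight-Round} (tightness of $T$ with $F\subseteq T$ and $H\prec P\setminus T$) yields $p\in\widehat F\cap H^\perp$. Now $O_p\subseteq O_F^G$ is immediate: for any tight $U\ni p$ one has $F\subseteq U$ by upward closure, and $H\subseteq P\setminus U$ since $p\perp H$ and $U$ is centred, whence $G\C H\subseteq P\setminus U$ gives $G\C P\setminus U$. The essential point is that $p$ is orthogonal to the \emph{shrunk} set $H$ satisfying $G\C H$, not merely to $G$ itself; that extra relation $G\C H$ is precisely what lets you pass from $H\subseteq P\setminus U$ to $G\C P\setminus U$.
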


\begin{proof}\
\begin{itemize}
\item[\eqref{Cover}]  As every $T\in\mathcal{T}(P)$ is non-empty, we have some $t\in T$ and hence $T\in O_t$.

\item[\eqref{PointRound}]  If $T\in O_t$ then $t\in T$ so, as $T$ is round, we have $s\in T$ with $s\prec t$ and hence $T\in O_s\Subset O_t$, by \eqref{FprecQ} and \autoref{O_FSubset}.

\item[\eqref{Dense}]  If $T\in O_F^G$ then $F\subseteq T$ and $G\C S\setminus T$.  By \eqref{RCQ}, we have $H$ with $G\C H\prec P\setminus T$.  By \eqref{Tight-Round}, we have $p$ with $F\succ p\perp H$ so $O_p\subseteq O_F^G$.  As the $(O_F^G)$ form a basis for $\mathcal{T}(P)$, this shows that any non-empty open $O\subseteq\mathcal{T}(P)$ contains some $O_p$, which is non-empty by \eqref{Opnonempty}.

\item[\eqref{Separating}]  If $T,U\in\mathcal{T}(P)$ are distinct then we have some $t\in T\setminus U$ or $t\in U\setminus T$, i.e. $U\notin O_t\ni T$ or $T\notin O_t\ni U$ so $O_t$ distinguishes $T$ from $U$. \qedhere
\end{itemize}
\end{proof}

Next we show how concrete pseudobases yield abstract pseudobases.  Note the following extends \cite[Theorem 8.4]{BiceStarling2016} to non-0-dimensional spaces.

\begin{thm}\label{TopologyRecovery}
If $P\not\ni\emptyset$ is a pseudobasis of $X$ and we take $\prec\ =\ \Subset$ then
\begin{align}
\label{QperpR}Q\perp R\qquad&\Leftrightarrow\qquad(\bigcup Q)\cap(\bigcup R)=\emptyset.\\
\label{QDR}Q\D R\qquad&\Leftrightarrow\qquad\bigcup Q\subseteq\overline{\bigcup R}.\\
\intertext{If $X$ is also Hausdorff then $P$ is an abstract pseudobasis with}
\label{QCR}Q\C R\qquad&\Leftrightarrow\qquad\bigcup Q\Subset\bigcup R
\end{align}
and $x\mapsto T_x=\{O\in P:x\in O\}$ is a homeomorphism from $X$ onto $\mathcal{T}(P)$.
\end{thm}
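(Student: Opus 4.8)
The plan is to establish the two identities \eqref{QperpR} and \eqref{QDR} first (neither needs transitivity or Hausdorffness), then to verify the abstract‑pseudobasis axioms together with \eqref{QCR} under the extra Hausdorff hypothesis, and finally to analyse the bijection $\phi\colon x\mapsto T_x$. For \eqref{QperpR} the direction $\Leftarrow$ is immediate, since any $p\in Q^\succ\cap R^\succ$ is a nonempty (as $\emptyset\notin P$) open set with $p\subseteq\bigcup Q$ and $p\subseteq\bigcup R$; for $\Rightarrow$, given $x\in q\cap r$ with $q\in Q$ and $r\in R$, I would use \eqref{PointRound} to pick $p_1,p_2\in P$ with $x\in p_1\Subset q$ and $x\in p_2\Subset r$, and then \eqref{Dense} to pick a nonempty $N\in P$ inside the open set $p_1\cap p_2$; auxiliarity of $\Subset$ to $\subseteq$ then gives $N\Subset q$ and $N\Subset r$, so $N\in Q^\succ\cap R^\succ$. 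For \eqref{QDR} I would combine the identity $Q\D R\Leftrightarrow Q^\succ\cap R^\perp=\emptyset$ with \eqref{QperpR} to rephrase $Q\D R$ as the statement that every $p\in Q^\succ$ meets $\bigcup R$; then $\Leftarrow$ holds because a nonempty open subset of $\overline{\bigcup R}$ must meet $\bigcup R$, while $\Rightarrow$ follows from the same shrink‑then‑densify argument carried out inside an arbitrary open neighbourhood of a point of $\bigcup Q$.

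Now assume $X$ Hausdorff. Transitivity of $\Subset$ is standard, and roundness of $P$ is just \eqref{PointRound} applied at a point of a fixed $O\in P$. For \eqref{Shrinking}, given $p\Subset q$ I would fix a compact $K$ with $p\subseteq K\subseteq q$, apply \eqref{PointRound} twice at each $y\in K$ to obtain $N_y\Subset M_y\Subset q$ in $P$ with $y\in N_y$, and extract a finite subcover $K\subseteq N_{y_1}\cup\dots\cup N_{y_n}=:\bigcup G$; then $p\subseteq\bigcup G$ gives $p\D G$ by \eqref{QDR}, while $N_{y_i}\Subset M_{y_i}\Subset q$ gives $G\prec q^\succ$, so $p\C q^\succ$. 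Hence $P$ is an abstract pseudobasis. For \eqref{QCR}: if $Q\D F\prec R$ with $F$ finite then, $X$ being Hausdorff, each $f\in F$ has $\overline f$ compact with $\overline f\subseteq\bigcup R$, so $\overline{\bigcup F}$ is compact and $\bigcup Q\subseteq\overline{\bigcup F}\subseteq\bigcup R$ (the first inclusion by \eqref{QDR}); conversely, covering a witnessing compact set by $P$‑neighbourhoods each $\Subset$ some element of $R$ (via \eqref{PointRound}) and extracting a finite subcover $F$ yields $Q\D F\prec R$, again by \eqref{QDR}.

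Turning to $\phi$: injectivity is \eqref{Separating}, while $T_x$ is nonempty by \eqref{Cover} and round by \eqref{PointRound}, so the only real point is that $T_x\not\CC P\setminus T_x$. Here the key lemma is that $x\in\overline{\bigcup\widehat F}$ for every finite $F\subseteq T_x$: since $\bigcap F$ is an open neighbourhood of $x$, local compactness supplies an open $V$ with $x\in V$, $\overline V$ compact and $\overline V\subseteq\bigcap F$, and then for any open $U\ni x$, \eqref{Dense} produces a nonempty $N\in P$ with $N\subseteq U\cap V$; as $\overline N\subseteq\overline V\subseteq\bigcap F$ is compact we get $N\in\widehat F$, so $U$ meets $\bigcup\widehat F$. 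If instead $\widehat F\D G\prec P\setminus T_x$ with $G$ finite, then each $g\in G$ lies $\Subset$ some $h\in P$ with $x\notin h$, so $x\notin\overline g$ by Hausdorffness, hence $x\notin\overline{\bigcup G}$; but \eqref{QDR} gives $\bigcup\widehat F\subseteq\overline{\bigcup G}$, so $x\notin\overline{\bigcup\widehat F}$, contradicting the lemma. For surjectivity, given $T\in\mathcal T(P)$ I would show that $K:=\bigcap_{O\in T}O=\bigcap_{O\in T}\overline O$ is nonempty and compact: compact because $K\subseteq\overline s$ for some $s\in T$ with $\overline s$ compact (using roundness of $T$ and Hausdorffness), nonempty because the closed sets $\overline O$ $(O\in T)$ have the finite intersection property on $\overline s$ (by centredness of $T$), and equal to $\bigcap_{O\in T}O$ by roundness again. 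If $K$ were covered by $\{O:O\in P\setminus T\}$, a finite subcover together with a compactness argument would yield a finite $F\subseteq T$ with $\bigcap_{f\in F}\overline f\subseteq\bigcup_{j=1}^k O'_j$ for some $O'_j\in P\setminus T$; then $\overline{\bigcup\widehat F}$ is compact and contained in $\bigcup_j O'_j$, so $\widehat F\C\{O'_1,\dots,O'_k\}$ by \eqref{QCR} and hence $T\CC P\setminus T$, contradicting tightness. So some $x$ lies in $K\setminus\bigcup_{O\in P\setminus T}O$, and for that $x$ one checks directly that $T_x=T$.

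Finally, $\phi$ is a bijection with $\phi^{-1}(O_p)=p$ and $\phi(p)=O_p$ for every $p\in P$ (the latter using surjectivity), so $\phi$ is a homeomorphism from $X$ with the topology generated by $P$ onto $\mathcal T(P)$ with the topology generated by $(O_p)_{p\in P}$; passing to the patch topology on each side recovers the given topologies, via \autoref{PseudoPatch} applied to $P$ and to the pseudobasis $(O_p)_{p\in P}$ of $\mathcal T(P)$ furnished by \autoref{abstract->concrete}, and a homeomorphism carries patch topologies to patch topologies. I expect the main obstacle to be surjectivity: tight subsets of $P$ can be strictly nested — in the one‑point compactification $\mathbb N\cup\{\infty\}$ one has $T_\infty\subsetneq T_n$ — so the point $x$ must be chosen with care, and the compactness argument that $\bigcap_{O\in T}O$ is not covered by $P\setminus T$ is precisely where tightness of $T$, through \eqref{QCR}, is really used; a secondary subtlety is the ``$x\in\overline{\bigcup\widehat F}$'' lemma, where one must remember that the $P$‑neighbourhoods of $x$ need not form a neighbourhood base at $x$.
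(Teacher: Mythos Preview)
Your proof is correct and follows the same overall architecture as the paper: the equivalences \eqref{QperpR}, \eqref{QDR}, \eqref{QCR} via \eqref{Dense} and \eqref{PointRound}, the \eqref{Shrinking} axiom via a compactness cover argument, and the final homeomorphism via the patch-topology result (\autoref{PseudoPatch}) applied on both sides using \autoref{abstract->concrete}.

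Two steps are packaged differently. For tightness of $T_x$, the paper directly exhibits an element of $\widehat F\cap H^\perp$ by applying \eqref{Dense} (and \eqref{PointRound}) to the open neighbourhood $\bigcap F\setminus\overline{\bigcup H}$ of $x$, thereby verifying \eqref{Tight-Round} in one line; your route instead proves the auxiliary density lemma $x\in\overline{\bigcup\widehat F}$ and then derives a contradiction through \eqref{QDR}. For surjectivity, the paper writes
\[
C=\bigcap T\setminus\bigcup(P\setminus T)=\bigcap_{\substack{O\in T\\ N\prec P\setminus T}}\overline O\setminus N
\]
as a single intersection of compact closed sets and invokes the finite-intersection property once, that FIP being precisely \eqref{Tight-Round}; you instead first show $K=\bigcap T$ nonempty and then rule out $K\subseteq\bigcup(P\setminus T)$ by a separate compactness step combined with \eqref{QCR}. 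The paper's organisation is a little slicker in both places (it never needs to pass through \eqref{QDR} or \eqref{QCR} for these two steps), but your arguments are equally valid and make the role of the already-proved equivalences more visible.
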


\begin{proof}\
\begin{itemize}
\item[\eqref{QperpR}]  If $(\bigcup Q)\cap(\bigcup R)=\emptyset$ then certainly $Q^\succ\cap R^\succ\subseteq Q^\supseteq\cap R^\supseteq=\emptyset$.  If $(\bigcup Q)\cap(\bigcup R)\neq\emptyset$ then we must have $O\cap N\neq\emptyset$, for some $O\in Q$ and $N\in R$.  Then \eqref{Dense} and \eqref{PointRound} yields $M\in P$ with $M\Subset O\cap N$ so $O\not\perp N$.

\item[\eqref{QDR}]  If $Q\not\D R$ then we have $O\in Q^\succ$ with $R\perp O$.  Thus $O\subseteq\bigcup Q$ and $O\cap\bigcup R=\emptyset=O\cap\overline{\bigcup R}$ so $\bigcup Q\not\subseteq\overline{\bigcup R}$.  Conversely, if $\bigcup Q\not\subseteq\overline{\bigcup R}$ then $N\setminus\overline{\bigcup R}\neq\emptyset$, for some $N\in Q$.  Then \eqref{Dense} and \eqref{PointRound} yields $O\in P$ with $O\Subset N\setminus\bigcup R$ so $Q\not\D R$.

\item[\eqref{QCR}]  If $G$ is Hausdorff and $\bigcup Q\Subset\bigcup R$ then $\overline{\bigcup Q}$ is compact and $\overline{\bigcup Q}\subseteq\bigcup R$.  Thus, for each $g\in\overline{\bigcup Q}$, we have $O\in R$ with $g\in O$.  Then \eqref{PointRound} yields $N\in P$ with $g\in N\Subset O$.  As $\overline{\bigcup Q}$ is compact we can therefore cover it with finite $F\subseteq O^\succ\subseteq R^\succ$ so $Q\D F\prec R$, i.e. $Q\C R$.  Conversely, if $Q\D F\prec R$ then $\overline{\bigcup{Q}}\subseteq\overline{\bigcup{F}}=\bigcup_{O\in F}\overline{O}\subseteq\bigcup R$, as $F$ is finite.  Also $\bigcup_{O\in F}\overline{O}$ is compact, as a finite union of compact sets, and hence $\bigcup Q\Subset\bigcup R$.
\end{itemize}

Certainly $\Subset$ is transitive and $P$ is round, by \eqref{PointRound}.  Moreover, as above we see that whenever $O\Subset N$ we can find $F$ with $O\C F\prec N$, so $P$ also satisfies \eqref{Shrinking} and is thus an abstract pseudobasis.

Now each $T_x$ is non-empty and round by \eqref{Cover} and \eqref{PointRound}.  We claim each $T_x$ is also tight.  Indeed, for any finite $F\subseteq T_x$ and $H\prec P\setminus T_x$, note that $O\in H$ means $O\Subset N\not\ni x$, for some $N\in P$, and hence $x\notin\overline{O}$, as $X$ is Hausdorff.  Thus $x\in\bigcap F\setminus\overline{\bigcup H}$ so we have some $O\in P$ with $O\Subset\bigcap F\setminus\overline{\bigcup H}$, by \eqref{Dense}, thus verifying \eqref{Tight-Round}.

Thus $x\mapsto T_x$ maps $X$ to $\mathcal{T}(P)$.  By \eqref{Separating}, $x\mapsto T_x$ is injective.  To see that it is also surjective, take $T\in\mathcal{T}(P)$ and note that
\[C=\bigcap T\setminus\bigcup(P\setminus T)=\bigcap_{\substack{O\in T\\ N\prec P\setminus T}}\overline{O}\setminus N,\]
as $T$ is round and $P$ is point-round (so that we can replace $N\in P\setminus T$ with $N\prec P\setminus T$ at the end).  As $T$ is tight, this last collection of sets has the finite intersection property.  As it also consists of compact closed subsets, the entire intersection must also be non-empty and hence $x\in C$, for some $x\in G$.  By the definition of $C$, if $O\in T$ then $x\in\bigcap T\subseteq O$ so $O\in T_x$, while if $O\in P\setminus T$ then $x\notin\bigcup P\setminus T\supseteq O$ so $O\notin T_x$.  Thus $T=T_x$, showing that $x\mapsto T_x$ is indeed surjective.

Then we immediately see that $x\mapsto T_x$ is a homeomorphism with respect to the topologies on $X$ and $\mathcal{T}(P)$ generated by $P$ and $(O_p)_{p\in P}$ respectively.  Thus $x\mapsto T_x$ is also a homeomorphism with respect to the corresponding patch topologies, which are just the original topologies of $X$ and $\mathcal{T}(P)$, by \autoref{PseudoPatch} and \autoref{abstract->concrete}.
\end{proof}

In summary, a concrete pseudobasis of a locally compact Hausdorff space always yields an abstract pseudobasis.  Moreover, if we represent this abstract pseudobasis as a concrete pseudobasis of its tight spectrum, we recover the original pseudobasis.

However, if we represent a given abstract pseudobasis $(P,\prec)$ as a concrete pseudobasis $((O_p)_{p\in P},\Subset)$ then we lose $\prec$, as $\Subset$ corresponds instead to the weaker relation $\C$ on $P$.  We could even have $O_p=O_q$ for distinct $p,q\in P$.  If we want a true duality then we should really restrict to the abstract pseudobases with $\prec\ =\ \C$ on $P$.  More explicitly, these are the \emph{separative} pseudobases, i.e. satisfying
\[\label{Separative}\tag{Separative}p\not\prec q\qquad\Rightarrow\qquad\forall\text{ finite }F\prec q\ \exists r\prec p\ (r\perp F).\]
Thus the duality we obtain can be represented as
\[\text{separative abstract pseudobases}\qquad\leftrightarrow\qquad\text{pseudobases of Hausdorff spaces}.\]

\begin{rmk}
When $\prec$ is reflexive it suffices to consider $F=q$ in \eqref{Separative}, in which case this reduces to the usual notion of a separative poset (see \cite[Definition III.4.10]{Kunen2011}) and $(P,\C)$ is the separative quotient of $(P,\prec)$.  In general, if $(P,\prec)$ is an abstract pseudobasis then $(P,\C)$ is again an abstract pseudobasis, which can be seen by going through the representation above or more directly using the basic properties in \autoref{DCproperties}.
\end{rmk}

\section{Inverse Semigroups}\label{InverseSemigroups}

Next we examine what happens when our abstract pseudobases have some extra inverse semigroup structure.  In this case, each tight subset will be a coset which will naturally give the tight spectrum an extra \'etale groupoid structure.  First, however, we need some general results about inverse semigroups and their cosets.

Recall that an \emph{inverse semigroup} (see \cite{Lawson1998}) is a semigroup $S$ in which every $s\in S$ has a unique generalised inverse $s^{-1}$, i.e. satisfying
\[ss^{-1}s=s\qquad\text{and}\qquad s^{-1}ss^{-1}=s^{-1}.\]
Actually, uniqueness here is equivalently to saying that the idempotents
\[E=\{s\in S:s=s^2\}=\{s^{-1}s:s\in S\}=\{ss^{-1}:s\in S\}\]
always commute.  The \emph{canonical partial order} $\leq$ on $S$ is given by
\[s\leq t\qquad\Leftrightarrow\qquad s^{-1}s=s^{-1}t\qquad\Leftrightarrow\qquad ss^{-1}=ts^{-1}\qquad\Leftrightarrow\qquad s\in Et=tE.\]
Also, a \emph{zero} in $S$ is a (necessarily unique) element $0\in S$ such that
\[0S=S0=\{0\}.\]
Note this implies $0\in E$ and $S\geq 0$.

\subsection{Cosets}

\begin{center}
\textbf{Throughout this section we assume $S$ is an inverse semigroup.}
\end{center}

\begin{dfn}[{\cite[\S3.1]{LawsonMargolisSteinberg2013}}]
We call $C\subseteq S$ a \emph{coset} if $CC^{-1}C=C^\leq=C$.
\end{dfn}

As $s=ss^{-1}s$, for all $s\in S$, for $C$ to be a coset it suffices that
\[\tag{Coset}CC^{-1}C\subseteq C^\leq\subseteq C.\]
Moreover, even if $C$ just satisfies $CC^{-1}C\subseteq C^\leq$, we can still obtain a coset by taking the upwards closure, as $C^\leq C^{\leq-1}C^\leq\subseteq(CC^{-1}C)^\leq\subseteq C^{\leq\leq}=C^\leq$.

\begin{prp}
For any non-empty $T,U,C\subseteq S$, if $C$ is a coset then
\begin{equation}\label{T-1U}
T^{-1}U\subseteq(CC^{-1})^\leq\qquad\Leftrightarrow\qquad T^{-1}UC\subseteq C\qquad\Rightarrow\qquad(TC)^\leq=(UC)^\leq.
\end{equation}
\end{prp}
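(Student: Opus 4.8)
The plan is to reduce everything to two elementary facts about the canonical order on an inverse semigroup, both immediate from $s\leq t\Leftrightarrow s\in Et=tE$: multiplication is order-preserving on either side (so $s\leq t$ implies $rs\leq rt$ and $sr\leq tr$), and $er\leq r$ as well as $re\leq r$ whenever $e$ is idempotent. With these in hand the proof is pure bookkeeping, the only care being to keep the idempotent factors on the correct side.

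For the forward direction of the equivalence, assume $T^{-1}U\subseteq(CC^{-1})^\leq$ and take $t\in T$, $u\in U$, $c\in C$. Choose $a,b\in C$ with $t^{-1}u\geq ab^{-1}$; right-multiplying by $c$ gives $t^{-1}uc\geq ab^{-1}c\in CC^{-1}C=C$, and since $C=C^\leq$ is upwards closed this puts $t^{-1}uc\in C$. For the reverse direction, fix any $c\in C$ (possible as $C\neq\emptyset$); given $t\in T$, $u\in U$, the hypothesis gives $t^{-1}uc\in C$, hence $(t^{-1}uc)c^{-1}\in CC^{-1}$, and because $cc^{-1}$ is idempotent $(t^{-1}uc)c^{-1}=t^{-1}u\,cc^{-1}\leq t^{-1}u$, so $t^{-1}u\in(CC^{-1})^\leq$.

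For the final implication, I would first observe that the middle condition is symmetric in $T$ and $U$: taking inverses turns $T^{-1}U\subseteq(CC^{-1})^\leq$ into $U^{-1}T\subseteq\bigl((CC^{-1})^{-1}\bigr)^\leq=(CC^{-1})^\leq$, so the equivalence just proved also yields $U^{-1}TC\subseteq C$. Hence it suffices to show $(TC)^\leq\subseteq(UC)^\leq$. Given $t\in T$, $c\in C$, fix any $u\in U$; then $u^{-1}tc\in U^{-1}TC\subseteq C$, so $u(u^{-1}tc)\in UC$, while $uu^{-1}$ idempotent gives $uu^{-1}tc\leq tc$, witnessing $tc\in(UC)^\leq$. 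Since $(UC)^\leq$ is upwards closed (indeed $Q^{\leq\leq}=Q^\leq$ for every $Q$, by transitivity of $\leq$), this gives $(TC)^\leq\subseteq(UC)^\leq$, and the reverse inclusion follows by the symmetry. The only step that is not mechanical is this symmetry observation, which collapses what looks like a two-sided computation into a single inclusion; past that, the main obstacle is merely keeping every inequality pointing the right way.
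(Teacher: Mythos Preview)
Your proof is correct and follows essentially the same approach as the paper's: both establish the equivalence by multiplying on the right by $C$ (forward) and by $C^{-1}$ (backward, using $xcc^{-1}\leq x$), then obtain the final implication via the symmetry $U^{-1}T=(T^{-1}U)^{-1}\subseteq(CC^{-1})^\leq$ together with $uu^{-1}tc\leq tc$. The only difference is cosmetic\textemdash you work elementwise while the paper writes the same computations as set inclusions like $T^{-1}UC\subseteq(CC^{-1})^\leq C\subseteq(CC^{-1}C)^\leq=C$ and $(UC)^\leq\subseteq(TT^{-1}UC)^\leq\subseteq(TC)^\leq$.
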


\begin{proof}
If $T^{-1}U\subseteq(CC^{-1})^\leq$ then
\[T^{-1}UC\subseteq(CC^{-1})^\leq C\subseteq(CC^{-1}C)^\leq=C^\leq=C.\]
Conversely, if $T^{-1}UC\subseteq C(\neq\emptyset$ and $T\neq\emptyset$) then
\begin{align*}
T^{-1}U&\subseteq(T^{-1}UCC^{-1})^\leq\subseteq(CC^{-1})^\leq\quad\text{and}\\
(UC)^\leq&\subseteq(TT^{-1}UC)^\leq\subseteq(TC)^\leq.
\end{align*}
But $T^{-1}U\subseteq(CC^{-1})^\leq$ implies $U^{-1}T=(T^{-1}U)^{-1}\subseteq(CC^{-1})^{\leq-1}=(CC^{-1})^\leq$ and hence $U^{-1}TC\subseteq C$ which (as $U\neq\emptyset$) likewise yields
\[(TC)^{-1}\subseteq(UU^{-1}TC)^\leq\subseteq(UC)^\leq\]
and hence $(TC)^\leq=(UC)^\leq$.
\end{proof}

\begin{prp}\label{TCcoset}
If $T^{-1}T\subseteq(CC^{-1})^\leq$ and $C$ is coset then so is $(TC)^\leq$.
\end{prp}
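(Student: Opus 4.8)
The plan is to put $D = (TC)^{\leq}$ and to verify the sufficient condition for being a coset recorded just after the definition of coset, namely $DD^{-1}D \subseteq D^{\leq} \subseteq D$. Since upward closure is idempotent we have $D^{\leq} = D$ automatically, so the only real task is to show $DD^{-1}D \subseteq D$.

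First I would isolate the two elementary facts about the canonical order that drive the set arithmetic: $(X^{\leq})^{-1} = (X^{-1})^{\leq}$, coming from $s \leq t \Leftrightarrow s^{-1} \leq t^{-1}$, and $X^{\leq}Y^{\leq} \subseteq (XY)^{\leq}$, coming from the compatibility of $\leq$ with multiplication. From the hypothesis $T^{-1}T \subseteq (CC^{-1})^{\leq}$ and the coset identity $CC^{-1}C = C$ I would then deduce, exactly as in the forward direction of \eqref{T-1U}, that $T^{-1}TC \subseteq (CC^{-1})^{\leq}C \subseteq (CC^{-1}C)^{\leq} = C^{\leq} = C$.

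The main computation then runs as follows. Using $(X^{\leq})^{-1} = (X^{-1})^{\leq}$ we get $D^{-1} = (C^{-1}T^{-1})^{\leq}$, and two applications of $X^{\leq}Y^{\leq} \subseteq (XY)^{\leq}$ give
\[DD^{-1}D = (TC)^{\leq}(C^{-1}T^{-1})^{\leq}(TC)^{\leq} \subseteq (TCC^{-1}T^{-1}TC)^{\leq}.\]
Grouping the inner product as $(TC)(C^{-1}T^{-1}TC)$ and using $T^{-1}TC \subseteq C$, hence $C^{-1}T^{-1}TC \subseteq C^{-1}C$, together with $CC^{-1}C = C$, yields $TCC^{-1}T^{-1}TC \subseteq TCC^{-1}C = TC$, so $DD^{-1}D \subseteq (TC)^{\leq} = D$, as required; and $D^{\leq}=D$ is trivial, so $D$ is a coset.

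I do not expect a genuine obstacle here — it is bookkeeping with set products — but the step to watch is the last one: the coset identity $CC^{-1}C = C$ is \emph{not} a formal consequence of associativity, so it must be applied to the genuinely contiguous block $CC^{-1}C$ (here only after first absorbing the middle factor $T^{-1}T$ inside $(CC^{-1})^{\leq}$ via the hypothesis), and one must check that each manipulation extracting or absorbing a $(\cdot)^{\leq}$ is licensed by $X^{\leq}Y^{\leq}\subseteq(XY)^{\leq}$ and idempotency of upward closure rather than by a naive rearrangement.
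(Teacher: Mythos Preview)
Your proof is correct and follows essentially the same route as the paper: both use \eqref{T-1U} to get $T^{-1}TC\subseteq C$, then compute $TCC^{-1}T^{-1}TC\subseteq TCC^{-1}C=TC$ and pass to the upward closure. The paper compresses your handling of the $(\cdot)^{\leq}$'s by first showing $TC(TC)^{-1}TC\subseteq TC$ and then invoking the remark preceding the definition (that $X^\leq X^{\leq-1}X^\leq\subseteq(XX^{-1}X)^\leq$), which is exactly what your two elementary facts unpack.
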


\begin{proof}
By \eqref{T-1U}, $T^{-1}T\subseteq(CC^{-1})^\leq$ implies $T^{-1}TC\subseteq C$ so
\[TC(TC)^{-1}TC=TCC^{-1}T^{-1}TC\subseteq TCC^{-1}C=TC.\]
Thus the same applies to $(TC)^\leq$ and hence $(TC)^\leq$ is a coset.
\end{proof}

\begin{prp}\label{IdempotentCosets}
If $C$ is a coset then $C\cap E\neq\emptyset$ iff $C=(CC^{-1})^\leq$.
\end{prp}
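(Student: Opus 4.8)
The plan is to prove the two implications separately, working only from the defining coset identity $CC^{-1}C=C^\leq=C$ together with the elementary facts that $s\leq t$ iff $s\in tE$ and that every idempotent is its own inverse. (As in \eqref{T-1U}, I take $C$ to be nonempty here; the empty coset is a trivial exception to the stated equivalence.)

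For the implication $C=(CC^{-1})^\leq\Rightarrow C\cap E\neq\emptyset$, pick any $c\in C$: the element $cc^{-1}$ is an idempotent lying in $CC^{-1}\subseteq(CC^{-1})^\leq=C$, so $cc^{-1}\in C\cap E$. This direction is immediate.

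For the converse, fix $e\in C\cap E$, so $e=e^{-1}$, and establish $C=(CC^{-1})^\leq$ by proving both inclusions. For $C\subseteq(CC^{-1})^\leq$: given $c\in C$, the element $ce=ce^{-1}$ lies in $CC^{-1}$ (since $e\in C$ gives $e^{-1}\in C^{-1}$) and satisfies $ce\in cE$, hence $ce\leq c$; thus $c\in(CC^{-1})^\leq$. For $(CC^{-1})^\leq\subseteq C$: since $C=C^\leq$ and upward closure is monotone and idempotent, it suffices to show $CC^{-1}\subseteq C$. Given $c,d\in C$, the element $cd^{-1}e$ lies in $C\,C^{-1}\,C=C$ by the coset identity, while $cd^{-1}e\in(cd^{-1})E$ gives $cd^{-1}e\leq cd^{-1}$; since $C$ is upward closed, $cd^{-1}\in C$. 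Combining the two inclusions yields $C=(CC^{-1})^\leq$.

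The only point requiring a small trick is the inclusion $CC^{-1}\subseteq C$ in the converse: one multiplies $cd^{-1}$ on the right by the idempotent $e\in C$, which simultaneously lands the product in $CC^{-1}C=C$ and produces an element below $cd^{-1}$, so upward closedness of $C$ finishes the argument. Everything else is a direct unwinding of the definitions, so I expect no real obstacle beyond spotting this manoeuvre.
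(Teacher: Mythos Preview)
Your proof is correct and follows essentially the same approach as the paper's own proof: both directions hinge on multiplying by the idempotent $e$ to produce an element of $CC^{-1}C=C$ lying below the target, then invoking upward closedness. The paper phrases the second inclusion as $(CC^{-1})^\leq\subseteq((CC^{-1})^\leq e)^\leq\subseteq(CC^{-1}C)^\leq=C$, while you first reduce to $CC^{-1}\subseteq C$ and then take upward closure, but this is a cosmetic difference; your explicit note that the nonemptiness of $C$ is needed (and is implicitly assumed in the paper) is also correct.
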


\begin{proof}
If $e\in C\cap E$ then $C\subseteq(Ce)^\leq\subseteq(CC^{-1})^\leq$ and
\[(CC^{-1})^\leq\subseteq((CC^{-1})^\leq e)^\leq\subseteq(CC^{-1}C)^\leq=C.\]
On the other hand, if $C=(CC^{-1})^\leq$ then $cc^{-1}\in C\cap E$, for any $c\in C$.
\end{proof}

In \cite[\S3.1]{LawsonMargolisSteinberg2013} the cosets are considered as an inverse semigroup with the product $C\otimes D$ defined to be the coset generated by $CD$.  In particular, we always have $(CD)^\leq\subseteq C\otimes D$.  In the cases we are interested in, equality will even hold here.  Indeed, by \autoref{TCcoset} with $T=C^{-1}$, we always have
\[C^{-1}\otimes C=(C^{-1}C)^\leq.\]
Thus $C^{-1}\otimes C=D\otimes D^{-1}$ is equivalent to $(C^{-1}C)^\leq=(DD^{-1})^\leq$, in which case we again have $C\otimes D=(CD)^\leq$.  Indeed, for any $c,c'\in C'$, we have $c^{-1}c'\in(DD^{-1})^\leq$ so $(cD)^\leq=(c'D)^\leq$, by \eqref{T-1U}, and hence $(CD)^\leq=(cD)^\leq$ is a coset, again by \autoref{TCcoset}.

As with any inverse semigroup, this restricted product yields a \emph{groupoid} (see \cite[\S3.1]{Lawson1998}), i.e. a set $G$ with an inverse operation and partial product operation making $G$ a small category in which all morphisms are isomorphisms.

\begin{dfn}
The \emph{coset groupoid} of $S$ is the set of cosets of $S$ with operations
\begin{align}
\tag{Inverse}C&\mapsto C^{-1}\quad\text{and}\\
\tag{Product}(C,D)&\mapsto(CD)^\leq\quad\text{iff}\quad(C^{-1}C)^\leq=(DD^{-1})^\leq.
\end{align}
\end{dfn}

\subsection{Ordered Inverse Semigroups}

We will be interested inverse semigroups with an extra (possibly non-reflexive) order relation $\prec$ which behaves well with respect to the inverse semigroup structure.

\begin{dfn}
An \emph{ordered inverse semigroup} is an inverse semigroup $S$ together with a transitive left auxiliary relation $\prec\ \subseteq\ \leq$ preserving products and inverses, i.e.
\begin{align}
\label{LeftAuxiliary}\tag{Left Auxiliary}q\prec q'\leq r\qquad&\Rightarrow\qquad q\prec r\qquad\Rightarrow\qquad q\leq r.\\
\label{Multiplicative}\tag{Multiplicative}q\prec q'\quad\text{and}\quad r\prec r'\qquad&\Rightarrow\qquad qr\prec q'r'.\\
\label{Inverses}\tag{Invertive}q\prec r\qquad&\Rightarrow\qquad q^{-1}\prec r^{-1}.
\end{align}
\end{dfn}

\begin{rmk}\label{CanonicalOrderedInverseSemigroup}
Any inverse semigroup $S$ becomes an ordered inverse semigroup when we take $\prec\ =\ \leq$.  The point is that there may be many other strictly stronger relations $\prec$ which make $S$ an ordered inverse semigroup.
\end{rmk}

One immediate consequence is the following auxiliary-product property.

\begin{prp}
If $r^{-1}r\leq ss^{-1}$ and $s\prec S$ in an ordered inverse semigroup $S$,
\begin{equation}\label{ac<bc}
q\prec r\qquad\Rightarrow\qquad qs\prec rs.
\end{equation}
\end{prp}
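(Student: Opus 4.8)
The plan is to reduce the implication to a single application of \eqref{Multiplicative}, after first identifying $rs$ with a more convenient product.

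Using the hypothesis $s\prec S$, I would fix some $u\in S$ with $s\prec u$; since $\prec\ \subseteq\ \leq$ by \eqref{LeftAuxiliary}, this also gives $s\leq u$. The crux is then the identity $ru=rs$, which I would establish in two halves. First, $r^{-1}r\leq ss^{-1}$ is an inequality of idempotents, so $(r^{-1}r)(ss^{-1})=r^{-1}r$, and therefore
\[rss^{-1}=(rr^{-1}r)ss^{-1}=r\,(r^{-1}r)(ss^{-1})=r\,r^{-1}r=r.\]
Second, $s\leq u$ means $s=eu$ for some idempotent $e$, whence $ss^{-1}=euu^{-1}$ and hence $ss^{-1}u=euu^{-1}u=eu=s$. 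Combining the two gives $ru=r\,(ss^{-1})\,u=r\,(ss^{-1}u)=rs$.

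Granting this, the proof concludes immediately: given $q\prec r$, applying \eqref{Multiplicative} to $q\prec r$ and $s\prec u$ yields $qs\prec ru$, and hence $qs\prec rs$ since $ru=rs$.

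The point to watch is the direction of the order manipulations. One cannot multiply $q\prec r$ on the right by $s$ directly: $\prec$ need not be reflexive, so $s\prec s$ may fail, and \eqref{LeftAuxiliary} only permits \emph{enlarging} the second argument of a $\prec$-relation, never shrinking it. The two hypotheses are precisely what sidesteps this obstacle --- $s\prec S$ supplies an auxiliary upper term $u\succ s$ that makes \eqref{Multiplicative} applicable, while $r^{-1}r\leq ss^{-1}$ is exactly the condition that collapses the resulting $ru$ back down to $rs$. Thus the only genuinely computational part is verifying the two inverse-semigroup identities $rss^{-1}=r$ and $ss^{-1}u=s$, both routine from the axioms and the description of $\leq$.
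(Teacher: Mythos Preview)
Your proposal is correct and follows essentially the same route as the paper: both pick an element above $s$ (the paper calls it $t$, you call it $u$), apply \eqref{Multiplicative} to get $qs\prec r\cdot(\text{that element})$, and then collapse the right-hand side back to $rs$ using $rss^{-1}=r$ and $ss^{-1}u=s$. The only difference is cosmetic --- the paper writes the collapse as the single chain $rt=rr^{-1}rt=rr^{-1}rss^{-1}t=rs$, invoking $s=ss^{-1}t$ directly from the definition of the canonical order, whereas you re-derive $ss^{-1}u=s$ from the $s=eu$ description.
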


\begin{proof}
Take $t\in S$ with $s\prec t$ and hence $s\leq t$.  By \eqref{Multiplicative},
\[qs\prec rt=rr^{-1}rt=rr^{-1}rss^{-1}t=rs.\qedhere\]
\end{proof}

\begin{prp}\label{Frink=>Coset}
Every Frink filter $T$ in an ordered inverse semigroup is a coset.
\end{prp}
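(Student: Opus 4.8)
The plan is to invoke the observation recorded just after the definition of coset \textendash\ namely that, since $s=ss^{-1}s$, it suffices to check $TT^{-1}T\subseteq T^\leq\subseteq T$ \textendash\ and to obtain both inclusions straight from \eqref{FrinkFilter} by pushing the witnessing data through products and inverses via \eqref{Multiplicative} and \eqref{Inverses}.

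First I would prove $\leq$-upward closure, i.e.\ $T^\leq\subseteq T$ (so in fact $T^\leq=T$). If $t\in T$ and $t\leq s$, then the $\Rightarrow$ part of \eqref{FrinkFilter} supplies $w\prec t$ with $T\mathrel{\widehat{\prec}}w$; since $w\prec t\leq s$, \eqref{LeftAuxiliary} gives $w\prec s$, and then the $\Leftarrow$ part of \eqref{FrinkFilter} yields $s\in T$. This sharpens the $\prec$-upward closure noted after the definition of \eqref{FrinkFilter} to the genuine $\leq$-upward closure that a coset requires.

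For the remaining inclusion, fix $a,b,c\in T$; by the $\Leftarrow$ part of \eqref{FrinkFilter} it is enough to exhibit $w\prec ab^{-1}c$ with $T\mathrel{\widehat{\prec}}w$. Applying the $\Rightarrow$ part of \eqref{FrinkFilter} to each of $a,b,c$ gives $w_a\prec a$, $w_b\prec b$, $w_c\prec c$ together with finite $F_a,F_b,F_c\subseteq T$ such that $\widehat{F}_a\prec w_a$, $\widehat{F}_b\prec w_b$, $\widehat{F}_c\prec w_c$. Set $w=w_aw_b^{-1}w_c$ and $F=F_a\cup F_b\cup F_c$. Then \eqref{Inverses} gives $w_b^{-1}\prec b^{-1}$, and two applications of \eqref{Multiplicative} give $w\prec ab^{-1}c$. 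The crucial point is that $F$ witnesses $T\mathrel{\widehat{\prec}}w$: any $p\in\widehat{F}$ lies in $\widehat{F}_a\cap\widehat{F}_b\cap\widehat{F}_c$, so $p\prec w_a$, $p\prec w_b$ and $p\prec w_c$; hence $p^{-1}\prec w_b^{-1}$ by \eqref{Inverses}, and two applications of \eqref{Multiplicative} give $pp^{-1}p\prec w_aw_b^{-1}w_c=w$, i.e.\ $p\prec w$ since $pp^{-1}p=p$. Thus $\widehat{F}\prec w$, so $T\mathrel{\widehat{\prec}}w$, and \eqref{FrinkFilter} delivers $ab^{-1}c\in T$, giving $TT^{-1}T\subseteq T$.

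Combining the two parts, $TT^{-1}T\subseteq T^\leq\subseteq T$, so $T$ is a coset. I do not anticipate a real obstacle; the one thing to get right is the last step, namely recognising that the identity $pp^{-1}p=p$ is exactly what allows a common lower bound $p$ of $F_a\cup F_b\cup F_c$ to be transported through the ``product of witnesses'' $w_aw_b^{-1}w_c$, so that the single union $F_a\cup F_b\cup F_c$ suffices rather than some set of products $f_af_b^{-1}f_c$ (which need not lie in $T$).
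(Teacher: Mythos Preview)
Your proof is correct and essentially identical to the paper's: both parts use the same witnesses from \eqref{FrinkFilter}, take the union $F_a\cup F_b\cup F_c$ (the paper's $F\cup G\cup H$), and push a common lower bound through via $p=pp^{-1}p$ together with \eqref{Multiplicative} and \eqref{Inverses} to land below $w_aw_b^{-1}w_c\prec ab^{-1}c$ (the paper's $a'b'^{-1}c'\prec ab^{-1}c$).
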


\begin{proof}
To see that $T=T^\leq$, note that if $u\geq t\in T$ then, as $T$ is a Frink filter, we have $w\in P$ and finite $F\subseteq T$ with $\widehat{F}\prec w\prec t\leq u$.  As $\prec$ is auxiliary to $\leq$, this yields $\widehat{F}\prec w\prec u$ and hence $u\in T$, as $T$ is a Frink filter.

To see that $T=TT^{-1}T$, again note that, for any $a,b,c\in T$, we have $a',b',c'\in P$ and finite $F,G,H\subseteq T$ with $\widehat{F}\prec a'\prec a$, $\widehat{G}\prec b'\prec b$ and $\widehat{H}\prec c'\prec c$.  For any $d\in\widehat{I}$, where $I=F\cup G\cup H$, \eqref{Multiplicative} and \eqref{Inverses} yields
\[d=dd^{-1}d\prec a'b'^{-1}c'\prec ab^{-1}c,\]
i.e. $\widehat{I}\prec a'b'^{-1}c'\prec ab^{-1}c$.  Thus $ab^{-1}c\in T$, as $T$ is a Frink filter.
\end{proof}

\begin{center}
\textbf{For the rest of this section we assume that $(S,\prec)$ is an ordered inverse semigroup with zero and $P=S\setminus\{0\}$ is round}.
\end{center}

\begin{rmk}
If $\prec$ is not just left but also right auxiliary, i.e.
\[\label{RightAuxiliarity}\tag{Right Auxiliary}q\leq r'\prec r\qquad\Rightarrow\qquad q\prec r,\]
then, as $P$ is round, for any $p\in P$, we have $p'\in P$ with $0\leq p'\prec p$ and hence $0\prec p$.  As long as we also have some $p,q\in P$ with $pq=0$ then \eqref{Multiplicative} yields $0=00\prec pq=0$ too.  So in this case $0$ is not just a $\leq$-minimum but also a $\prec$-minimum (which is moreover unique, as $\prec\ \subseteq\ \leq$).
\end{rmk}

We define $\D$, $\C$ and $\perp$ on $P$ from $\prec$ exactly as before.  For convenience, we extend these relations to $S$ by simply ignoring $0$, e.g. for $Q,R\subseteq S$,
\[Q\D R\qquad\Leftrightarrow\qquad(Q\setminus\{0\})\D(R\setminus\{0\}).\]

First we note that $\perp$ and $\D$ can equivalently be defined from $\leq$ rather than $\prec$.

\begin{prp}
For all $Q,R\subseteq S$ and $s\in S$,
\begin{align}
\label{QperpR0}Q^\geq\cap R^\geq=\{0\}\qquad\Leftrightarrow\qquad&Q\perp R\qquad\Rightarrow\qquad Qs\perp Rs.\\
\label{QDR0}Q^\geq\setminus\{0\}\ \geq\ R^\geq\setminus\{0\}\qquad\Leftrightarrow\qquad&Q\D R\qquad\Rightarrow\qquad Qs\D Rs.
\end{align}
\end{prp}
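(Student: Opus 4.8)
The plan is to prove the two biconditionals first, since they convert $\perp$ and $\D$ on $S$ into conditions referring only to the canonical order $\leq$, and then to deduce the two implications by elementary inverse‑semigroup arithmetic with $\leq$.

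For the biconditionals, the idea is that $\prec$ can be traded for $\leq$ up to a harmless shrinking. Since $\prec\ \subseteq\ \leq$, any witness for $Q\not\perp R$, or for a failure of the $\D$‑condition, is automatically a $\leq$‑witness; conversely, given a nonzero $p$ with $p\leq q$ for some $q\in Q$, roundness of $P$ gives $p'\in P$ with $p'\prec p$, and then $p'\prec q$ by \eqref{LeftAuxiliary}. Feeding this into \eqref{Disjoint} yields $Q\perp R\Leftrightarrow Q^\geq\cap R^\geq=\{0\}$, and feeding it into \eqref{DenseCover} (using transitivity of $\prec$ to chain two shrinkings) yields $Q\D R\Leftrightarrow Q^\geq\setminus\{0\}\geq R^\geq\setminus\{0\}$, where "$A\geq B$" abbreviates "every element of $A$ lies $\geq$ some element of $B$"; here one observes that a nonzero element below some $q\in Q$ lies above a nonzero element of $R^\geq$ exactly when it is not $\perp$ to $R$.

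With the biconditionals established, both implications reduce to statements about $\leq$. The engine is the identity: if $0\ne p\leq qs$ then $ps^{-1}\leq qss^{-1}\leq q$ and $ps^{-1}s=p$ (expand $p=pp^{-1}qs$), so in particular $ps^{-1}\ne 0$. For $Qs\perp Rs$: a nonzero $p\in(Qs)^\geq\cap(Rs)^\geq$ would give, via $p\leq qs$ and $p\leq rs$, a nonzero $ps^{-1}\in Q^\geq\cap R^\geq$, contradicting $Q\perp R$. For $Qs\D Rs$: given $0\ne p\leq qs$ with $q\in Q$, apply the $\D$‑condition to $ps^{-1}\in Q^\geq\setminus\{0\}$ to obtain a nonzero $y\leq ps^{-1}$ with $y\leq r$ for some $r\in R$; then $ys\leq rs$ and $ys\leq ps^{-1}s=p$, so $p$ lies above $ys$ — as required, provided $ys\ne 0$.

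The one genuinely delicate point, and the main obstacle, is precisely this non‑vanishing of $ys$: right multiplication by $s$ could a priori annihilate the witness. It is handled by tracking domain idempotents: from $y\leq ps^{-1}$ we get $y^{-1}y\leq (ps^{-1})^{-1}(ps^{-1})=s(p^{-1}p)s^{-1}\leq ss^{-1}$, hence $(y^{-1}y)(ss^{-1})=y^{-1}y$, and therefore $(ys)(ys)^{-1}=yss^{-1}y^{-1}=y(y^{-1}y)(ss^{-1})y^{-1}=yy^{-1}\ne 0$, so $ys\ne 0$. I note in passing that neither \eqref{Multiplicative} nor \eqref{Inverses} is needed for this proposition: everything is pushed through $\leq$, so only $\prec\ \subseteq\ \leq$, \eqref{LeftAuxiliary}, transitivity of $\prec$, and roundness of $P$ are actually used.
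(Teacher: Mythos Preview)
Your proof is correct and follows the same overall architecture as the paper: establish the $\leq$-characterisations of $\perp$ and $\D$ via roundness and \eqref{LeftAuxiliary}, then push the implications through the $ps^{-1}$ trick. Two small points of divergence are worth noting. For $Qs\perp Rs$, the paper simply invokes the inverse-semigroup fact that right multiplication preserves meets, i.e.\ $q\wedge r=0$ implies $qs\wedge rs=(q\wedge r)s=0$ (citing Lawson's book), whereas you unpack this directly via $ps^{-1}$; the content is the same. For the non-vanishing of $ys$, the paper's ``again'' is literally a second application of the first step with $s^{-1}$ in place of $s$: from $0\neq y\leq ps^{-1}$ one gets $y(s^{-1})^{-1}=ys\neq 0$ by the same reasoning that gave $ps^{-1}\neq 0$ from $0\neq p\leq qs$. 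Your domain-idempotent computation $(ys)(ys)^{-1}=yy^{-1}$ is an equally valid alternative, arguably more transparent since it names the mechanism explicitly rather than relying on pattern-matching.
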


\begin{proof}\
\begin{itemize}
\item[\eqref{QperpR0}]  If $q,r\geq p\neq0$ then, as $P$ is round, we have $p'\neq0$ with $q,r\geq p\succ p'$ and hence $q,r\succ p'$, by \eqref{LeftAuxiliary}, so $q\not\perp r$.  Conversely, if $q\not\perp r$ then we have $p\neq0$ with $q,r\succ p$ and hence $q,r\geq p$, as $\prec\ \subseteq\ \leq$.  This proves the $\Leftrightarrow$ part.  For $\Rightarrow$, just note $q\wedge r=0$ implies $qs\wedge rs=(q\wedge r)s=0$, by \cite[Proposition 1.4.19]{Lawson1998}.

\item[\eqref{QDR0}]  If $Q\D R$, i.e. $Q^\succ\setminus\{0\}\succ R^\succ\setminus\{0\}$, then
\[Q^\geq\setminus\{0\}\ \succ\ Q^\succ\setminus\{0\}\ \succ\ R^\succ\setminus\{0\}\ \subseteq\ R^\geq\setminus\{0\}\]
and hence $Q^\geq\setminus\{0\}\geq R^\geq\setminus\{0\}$, as $P$ is round and \eqref{LeftAuxiliary} holds.  Conversely, if $Q^\geq\setminus\{0\}\geq R^\geq\setminus\{0\}$ then likewise
\[Q^\succ\setminus\{0\}\ \subseteq\ Q^\geq\setminus\{0\}\ \geq\ R^\geq\setminus\{0\}\ \succ\ R^\succ\setminus\{0\}\]
and hence $Q^\succ\setminus\{0\}\succ R^\succ\setminus\{0\}$.

Now assume $Q\D R$ and $0\neq p\leq qs$, for some $q\in Q$.  Then $0\neq p=pp^{-1}p\leq qsp^{-1}p$ so $sp^{-1}\neq0$.  Thus $0\neq ps^{-1}\leq qss^{-1}\leq q$.  As $Q\D R$, we have non-zero $r\leq ps^{-1}$ with $r\leq R$.  Then again $0\neq rs\leq ps^{-1}s\leq p$, showing that $Qs\D Rs$.\qedhere
\end{itemize}
\end{proof}

\begin{rmk}
If $S$ is separative, i.e. if $\C$ and $\prec$ coincide on singletons, then \eqref{RightAuxiliarity} holds.  Indeed \eqref{DC=>C}, \eqref{FprecQ}, \eqref{QDR0} and separativity yield
\[q\leq r'\prec r\quad\Rightarrow\quad q\D r'\C r\quad\Rightarrow\quad q\C r\quad\Rightarrow\quad q\prec r.\]
\end{rmk}

\begin{cor}
For any $Q,Q',R,R'\subseteq S$,
\begin{align}
\label{QRDQ'R'}Q\D Q'\quad\text{and}\quad R\D R'\qquad&\Rightarrow\qquad QR\D Q'R'.\\
\label{QRCQ'R'}Q\C Q'\quad\text{and}\quad R\C R'\qquad&\Rightarrow\qquad QR\C Q'R'.
\end{align}
\end{cor}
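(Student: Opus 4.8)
The plan is to derive both implications from the single-element right-multiplication facts in \eqref{QDR0}, together with two easy closure properties of $\D$. First I would record that $\D$ is closed under arbitrary \textbf{matched} unions, i.e.\ $Q_i\D R_i$ for all $i\in I$ implies $\bigcup_iQ_i\D\bigcup_iR_i$; this is immediate from \eqref{Ddef2} and the monotonicity of the operators $(\cdot)^\succ$ and $(\cdot)^\prec$ (it strengthens the binary case \eqref{Dcup}), and it survives the `ignore $0$' convention since $0\notin P$. Second, I would note that $\D$ is \emph{invertive}, i.e.\ $Q\D R\Rightarrow Q^{-1}\D R^{-1}$: the characterisation of $\D$ in \eqref{QDR0} is phrased purely in terms of $\leq$ and $0$, and $s\mapsto s^{-1}$ is an order automorphism of $(S,\leq)$ fixing $0$, so it preserves that characterisation (using $(Q^\geq)^{-1}=(Q^{-1})^\geq$).

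For \eqref{QRDQ'R'}: from $Q\D Q'$ and \eqref{QDR0} I get $Qr\D Q'r$ for every $r\in R$, so taking the union over $r\in R$ yields $QR\D Q'R$. To move from $Q'R$ to $Q'R'$ I would pass to inverses: $R\D R'$ gives $R^{-1}\D R'^{-1}$ by invertivity, hence $R^{-1}s\D R'^{-1}s$ for each $s\in Q'^{-1}$ by \eqref{QDR0}, and unioning over $s\in Q'^{-1}$ gives $(Q'R)^{-1}=R^{-1}Q'^{-1}\D R'^{-1}Q'^{-1}=(Q'R')^{-1}$; applying invertivity once more gives $Q'R\D Q'R'$. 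Finally, transitivity of $\D$ on $\mathcal{P}(S)$ (\autoref{DCproperties}, available since $\prec$ is transitive) chains $QR\D Q'R\D Q'R'$ into $QR\D Q'R'$.

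For \eqref{QRCQ'R'} I would simply unwind the definition of $\C$: choose finite $F,G\subseteq P$ with $Q\D F\prec Q'$ and $R\D G\prec R'$. Then \eqref{QRDQ'R'}, just proved, gives $QR\D FG$, and $FG$ is finite; meanwhile \eqref{Multiplicative} gives $FG\prec Q'R'$, where one checks that any nonzero $fg$ with $f\in F$, $g\in G$ satisfies $fg\prec q'r'$ for some $q'\in Q'$, $r'\in R'$, and that $q'r'\neq0$ since $fg\leq q'r'$ with $fg\neq0$ and $0$ least. Hence $QR\D FG\prec Q'R'$ with $FG$ finite, which is exactly $QR\C Q'R'$.

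The one genuine obstacle is the asymmetry of \eqref{QDR0}, which is stated only for right multiplication by a single element; the inverse trick in the second step of the $\D$-argument is precisely what circumvents it. Everything else — the union closure of $\D$, its invertivity, and the bookkeeping around products collapsing to $0$ — is routine given $\prec\ \subseteq\ \leq$ and that $0$ is the least element.
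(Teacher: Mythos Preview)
Your argument is correct and follows the same two-step chain $QR \D Q'R \D Q'R'$ that the paper uses, citing \eqref{QDR0} together with the infinite-union extension of \eqref{Dcup}; your treatment of \eqref{QRCQ'R'} via $QR \D FG \prec Q'R'$ is also identical to the paper's. The only difference is expository: the paper simply invokes \eqref{QDR0} for both halves of the chain, leaving the left-multiplication step $Q'R \D Q'R'$ implicit, whereas you make it explicit via the inverse trick (invertivity of $\D$ plus right multiplication), which is a perfectly legitimate way to supply that detail.
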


\begin{proof}\
\begin{itemize}
\item[\eqref{QRDQ'R'}]  If $Q\D Q'$ and $R\D R'$ then $QR\D Q'R\D Q'R'$, by \eqref{QDR0} and \eqref{Dcup} (or rather an extension of \eqref{Dcup} to infinite unions).
\item[\eqref{QRCQ'R'}]  If $Q\C Q'$ and $R\C R'$ then $Q\D F\prec Q'$ and $R\D G\prec R'$, for some finite $F,G\subseteq S$, and hence $QR\D FG\prec Q'R'$, by \eqref{QRDQ'R'} and \eqref{Multiplicative}.\qedhere
\end{itemize}
\end{proof}

\begin{prp}
If $s\prec S$ and $ss^{-1}\in(T^{-1}T)^\leq$ then
\begin{equation}\label{TsTight}
T\in\mathcal{T}(P)\qquad\Rightarrow\qquad(Ts)^\leq\in\mathcal{T}(P).
\end{equation}
\end{prp}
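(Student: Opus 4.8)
The goal is to show $(Ts)^\leq$ is a non-empty tight subset of $P$, and the plan is to transport tightness between $T$ and $(Ts)^\leq$ along right multiplication by $s$ and by $s^{-1}$, using the hypothesis $ss^{-1}\in(T^{-1}T)^\leq$ to supply good representatives. Throughout I use that a non-empty tight $T$ is round by definition, a coset by \autoref{Tight=>Frink} and \autoref{Frink=>Coset}, and centred by \autoref{tightChars} and \autoref{S0tight}. Fix $t_1,t_2\in T$ with $t_1^{-1}t_2\leq ss^{-1}$. For $t\in T$, the element $\hat t:=tt_1^{-1}t_2$ again lies in $T$ (as $T=TT^{-1}T$) and satisfies $\hat t^{-1}\hat t\leq ss^{-1}$, i.e. $\hat t=\hat tss^{-1}$, together with $\hat t\leq t$; these $\hat t$ are the intended representatives.

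First the easy points. For $t\in T$, choosing $p\prec t,t_1,t_2$ (possible since $\{t,t_1,t_2\}\subseteq T$ is centred) forces $p^{-1}p\leq t_1^{-1}t_2\leq ss^{-1}$, hence $p=pss^{-1}$ and $ps\neq0$; as $p\leq t$ this gives $ts\neq0$. Thus $0\notin(Ts)^\leq\subseteq P$ and $(Ts)^\leq\neq\emptyset$. For roundness of $(Ts)^\leq$, every $u\in(Ts)^\leq$ dominates some $\hat ts$ with $t\in T$, and roundness of $T$ yields $v\in T$ with $v\prec\hat t$; then $vs\prec\hat ts\leq u$ by \eqref{ac<bc} (valid since $\hat t^{-1}\hat t\leq ss^{-1}$ and $s\prec S$), so $vs\prec u$ by \eqref{LeftAuxiliary}, with $vs\in Ts\subseteq(Ts)^\leq$ and $vs\neq0$ (as $v^{-1}v\leq\hat t^{-1}\hat t\leq ss^{-1}$).

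The substance is $(Ts)^\leq\not\CC P\setminus(Ts)^\leq$, equivalently, by \eqref{Tight-Round}, that $\widehat F\cap G^\perp\neq\emptyset$ for all finite $F\subseteq(Ts)^\leq$ and $G\prec P\setminus(Ts)^\leq$. Replacing each $f\in F$ by a representative $\hat t_fs\leq f$ only shrinks $\widehat F$, so we may take $F=F_0s$ with finite $F_0\subseteq T$ whose members all satisfy $\hat t=\hat tss^{-1}$; then two applications of \eqref{ac<bc} give $\widehat F=\widehat{F_0}s$ and $\widehat{F_0}=\widehat Fs^{-1}$. On the other side, for $g\in G$ fix $h_g\notin(Ts)^\leq$ with $g\prec h_g$; whenever $gs^{-1}\neq0$ one has $h_gs^{-1}\neq0$ and $h_gs^{-1}\in P\setminus T$ (if $h_gs^{-1}\in T$ then $h_g\geq h_gs^{-1}s\in Ts$ would force $h_g\in(Ts)^\leq$). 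Building from these a finite $G'\prec P\setminus T$ and applying the tightness of $T$ to $F_0$ and $G'$ produces $q\in\widehat{F_0}$ with $q\perp gs^{-1}$ for all relevant $g$. Transporting back, $qs\in\widehat F$ by \eqref{ac<bc}, and $qs\perp g$ for every $g\in G$: a common lower bound $r\leq qs,g$ has $rs^{-1}\leq qss^{-1}=q$ and $rs^{-1}\leq gs^{-1}$, so $rs^{-1}=0$ (by the choice of $q$, or automatically when $gs^{-1}=0$), whence $r=rs^{-1}s=0$. Hence $qs\in\widehat F\cap G^\perp$, and $(Ts)^\leq$ is tight.

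The main obstacle is exactly the construction of the finite $G'\prec P\setminus T$ from $G\prec P\setminus(Ts)^\leq$ in the previous paragraph: multiplying $G$ by $s^{-1}$ lands its nonzero elements in $P\setminus T$, but only yields the weak relations $gs^{-1}\leq h_gs^{-1}$, whereas the tightness criterion for $T$ demands a genuine $\prec$-cover of a subset of $P\setminus T$. I expect to resolve this by first applying \eqref{Shrinking} to each $g\prec h_g$ to interpolate a finite $K_g\prec h_g$ with $g\D K_g$, replacing $G$ by $\bigcup_gK_g$ and invoking transitivity of $\D$ together with the $\leq$-descriptions of $\D$ and $\perp$ in \eqref{QDR0} and \eqref{QperpR0} to keep the bookkeeping consistent, so that the $s^{-1}$-translates of this refined cover genuinely lie $\prec P\setminus T$. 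Everything else is routine, using \eqref{ac<bc}, \eqref{QperpR0}, \eqref{QDR0}, \eqref{QRDQ'R'} and \eqref{QRCQ'R'}.
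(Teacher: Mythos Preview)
Your overall strategy and the early parts (non-emptiness, roundness, and the reduction of $F$ to $F_0s$ with $F_0\subseteq T$) are fine, and you correctly identify the real obstacle: from $g\prec h_g\in P\setminus(Ts)^\leq$ you only get $gs^{-1}\leq h_gs^{-1}$, not $gs^{-1}\prec h_gs^{-1}$, so you cannot feed this into the tightness criterion for $T$. However, your proposed fix via \eqref{Shrinking} does not resolve this. Interpolating $g\D K_g\prec h_g$ still leaves you with $k\prec h_g$ for $k\in K_g$, and multiplying by $s^{-1}$ again only yields $ks^{-1}\leq h_gs^{-1}$; \eqref{ac<bc} would require $h_g^{-1}h_g\leq s^{-1}s$, which you have no reason to expect. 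No amount of refining on the $g$-side fixes the fact that right multiplication by $s^{-1}$ degrades $\prec$ to $\leq$ on the $h_g$-side.

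The missing idea is to use the hypothesis $s\prec S$ on the \emph{other} factor: pick any $r$ with $s\prec r$. Then \eqref{Inverses} and \eqref{Multiplicative} give $gs^{-1}\prec h_gr^{-1}$ directly, and $h_gr^{-1}\in P\setminus T$ whenever it is nonzero (since $h_gr^{-1}\in T$ would force $h_g\geq h_gr^{-1}r\geq h_gr^{-1}s\in Ts$). This is exactly how the paper proceeds, though it packages the argument more efficiently by working at the level of $\C$ rather than unpacking to $\perp$: from $\widehat F\C H$ (with $F\subseteq Ts$) and $s^{-1}\prec r^{-1}$, \eqref{QRCQ'R'} gives $\widehat{Fs^{-1}}=\widehat Fs^{-1}\C Hr^{-1}$, and tightness of $T$ then forces some $hr^{-1}\in T$, whence $h\geq hr^{-1}s\in Ts$. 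Once you have the $r$-trick, your route via \eqref{Tight-Round} would also go through, but the paper's formulation avoids the case analysis on whether $gs^{-1}=0$.
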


\begin{proof}
If $T$ is tight then $T$ is a round Frink filter and hence a coset, by \autoref{Frink=>Coset}.  Also, for any $t\in T$, \eqref{T-1U} yields $tss^{-1}\in Tss^{-1}\subseteq T$.  As $T$ is round, we have $u\in T$ with $u\prec tss^{-1}$ and then \eqref{ac<bc} yields $us\prec tss^{-1}s=ts$.  This shows that $Ts$ is round and hence so is $(Ts)^\leq$, by \eqref{LeftAuxiliary}.

Now we need to show that $(Ts)^\leq\not\CC S\setminus(Ts)^\leq$.  For this it suffices to show that
\[Ts\CC H\qquad\Rightarrow\qquad H\cap(Ts)^\leq\neq\emptyset.\]
So take finite $F\subseteq Ts$ with $\widehat{F}\C H$.  Then \eqref{ac<bc} yields $\widehat{Fs^{-1}}=\widehat{F}s^{-1}\C Hr^{-1}$, for any $r\succ s$, by \eqref{FprecQ} and \eqref{QRCQ'R'}.  As $T$ is tight and $Fs^{-1}\subseteq Tss^{-1}\subseteq T$, we have $h\in H$ with $hr^{-1}\in T$.  Thus $h\geq hr^{-1}r\geq hr^{-1}s\in Ts$ so $H\cap(Ts)^\leq\neq\emptyset$.
\end{proof}

\begin{cor}\label{TightGroupoid}
$\mathcal{T}(P)$ is a subgroupoid of the coset groupoid of $S$.
\end{cor}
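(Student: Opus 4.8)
The plan is to check that the set $\mathcal{T}(P)$ of nonempty tight subsets is closed under the two operations of the coset groupoid, $T\mapsto T^{-1}$ and the partial product $(T,U)\mapsto(TU)^{\leq}$ (which in the coset groupoid is defined precisely when $(T^{-1}T)^{\leq}=(UU^{-1})^{\leq}$); the groupoid axioms are then inherited automatically, and in particular the source and range units $T^{-1}\otimes T$ and $T\otimes T^{-1}$ of any $T\in\mathcal{T}(P)$ lie in $\mathcal{T}(P)$ once it is closed under $^{-1}$ and $\otimes$. The point of departure is that each $T\in\mathcal{T}(P)$ is itself a coset: it is a Frink filter by \autoref{Tight=>Frink}, hence a coset by \autoref{Frink=>Coset}, and since $T\subseteq P=S\setminus\{0\}$ it is a zero-free coset of $S$, so $\mathcal{T}(P)$ genuinely embeds into the coset groupoid.

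For closure under inverses, I would first record that the invertive axiom \eqref{Inverses} makes the involution $p\mapsto p^{-1}$ of $P$ respect all the derived relations: one checks $(Q^{\succ})^{-1}=(Q^{-1})^{\succ}$ and $\widehat{F}^{-1}=\widehat{F^{-1}}$, whence $Q\prec R\Leftrightarrow Q^{-1}\prec R^{-1}$, $Q\D R\Leftrightarrow Q^{-1}\D R^{-1}$, and $Q\CC R\Leftrightarrow Q^{-1}\CC R^{-1}$. Given this, roundness of $T^{-1}$ is immediate from roundness of $T$, and $T^{-1}\not\CC P\setminus T^{-1}$ follows from $T\not\CC P\setminus T$ by applying the involution (using $P^{-1}=P$, so that $(P\setminus T)^{-1}=P\setminus T^{-1}$). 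As $T^{-1}$ is also nonempty, it is tight.

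Closure under products is the substantive step, and here the key observation is that for composable $T,U$ one in fact has $(TU)^{\leq}=(Tu)^{\leq}$ for \emph{every} single $u\in U$ --- so the product reduces to a principal translation and hence to \eqref{TsTight}. Indeed, fix any $u\in U$; composability gives $(T^{-1}T)^{\leq}=(UU^{-1})^{\leq}$, so for each $v\in U$ we have $uv^{-1}\in UU^{-1}\subseteq(UU^{-1})^{\leq}=(T^{-1}T)^{\leq}$, and \eqref{T-1U} applied with the coset $C=T^{-1}$ (for which $CC^{-1}=T^{-1}T$) yields $Tvu^{-1}\subseteq T$. Since $u^{-1}u$ is idempotent we have $vu^{-1}u\leq v$, so for $t\in T$ the element $t':=tvu^{-1}$ lies in $T$ and satisfies $t'u=tvu^{-1}u\leq tv$; thus $tv\in(Tu)^{\leq}$. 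Hence $TU\subseteq(Tu)^{\leq}$, and the reverse inclusion is trivial. Now I would pick $u\in U$ with $u\prec S$, which exists since $U$ is nonempty and round, and observe that $uu^{-1}\in UU^{-1}\subseteq(UU^{-1})^{\leq}=(T^{-1}T)^{\leq}$; then \eqref{TsTight} applies and shows that $T\otimes U=(TU)^{\leq}=(Tu)^{\leq}$ is nonempty and tight.

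With both closure statements in hand, $\mathcal{T}(P)$ inherits the groupoid structure from the coset groupoid, which finishes the proof. I expect the only genuinely delicate point to be the identity $(TU)^{\leq}=(Tu)^{\leq}$: it is exactly what lets one bypass the fact that $U$ is a full coset rather than a single element and reduce everything to \eqref{TsTight}. Everything else --- the compatibility of the auxiliary relations with inversion and the choice of a suitable $u\in U$ --- is routine bookkeeping with the definitions.
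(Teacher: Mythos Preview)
Your proof is correct and follows essentially the same approach as the paper: closure under inverses via \eqref{Inverses}, and closure under the restricted product by reducing $(TU)^\leq$ to $(Tu)^\leq$ via \eqref{T-1U} and then invoking \eqref{TsTight} for a round $u\in U$. You simply spell out in more detail what the paper compresses into citations of \eqref{T-1U} and \eqref{TsTight}.
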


\begin{proof}
Take any $T,U\in\mathcal{T}(P)$ with $(T^{-1}T)^\leq=(UU^{-1})^\leq$.  As $U$ is round, we have $u\in U$ with $u\prec U\subseteq S$.  Thus $(TU)^\leq=(Tu)^\leq\in\mathcal{T}(P)$, by \eqref{T-1U} and \eqref{TsTight}.  Thus $\mathcal{T}(P)$ is closed under the restricted product.  By \eqref{Inverses}, $\mathcal{T}(P)$ is also immediately seen to be closed under taking inverses.
\end{proof}

\begin{dfn}
We call $\mathcal{T}(P)$ the \emph{tight groupoid} of $S$.
\end{dfn}

\subsection{\'Etale Groupoids}

Recall that an \emph{\'etale groupoid} $G$ is a groupoid together with a topology that behaves well with respect to the groupoid operations, specifically
\begin{gather*}
g\mapsto g^{-1}g\quad\text{is a local homeomorphism and}\\
(g,h)\mapsto gh\quad\text{and}\quad g\mapsto g^{-1}\quad\text{are continuous}.
\end{gather*}

However, we will use an equivalent more algebraic definition.  First let
\begin{align*}
G^{(0)}&=\{g^{-1}g:g\in G\}.\\
G^{(2)}&=\{(g,h):g^{-1}g=hh^{-1}\}.\\
\mathcal{B}(G)&=\{B\subseteq G:B^{-1}B,BB^{-1}\subseteq G^{(0)}\}.\\
\mathcal{OB}(G)&=\mathcal{O}(G)\cap\mathcal{B}(G).
\end{align*}
So $G^{(0)}$ is the unit space of $G$, $G^{(2)}$ is the set of composable pairs, $\mathcal{B}(G)$ denotes the bisections of $G$ and $\mathcal{OB}(G)$ denotes the open bisections of $G$.

\begin{dfn}
We call a collection of bisections $\mathcal{B}\subseteq\mathcal{B}(G)$ an \emph{\'etale family} if $\mathcal{B}$ covers $G$ and is closed under pointwise products and inverses, i.e.
\[\tag{\'Etale Family}G=\bigcup\mathcal{B}\quad\text{and}\quad(O,N\in\mathcal{B}\quad\Rightarrow\quad ON\in\mathcal{B}\text{ and }O^{-1}\in\mathcal{B}).\]
An \emph{\'etale (pseudo)(sub)basis} is a (pseudo)(sub)basis that is also an \'etale family.
\end{dfn}

\begin{prp}\label{EtaleSubbasis}
$G$ is an \'etale groupoid if and only if $G$ has an \'etale subbasis.
\end{prp}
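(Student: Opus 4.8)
The plan is to prove the two implications separately: for the forward direction I would exhibit $\mathcal{OB}(G)$ as an étale (indeed) basis, and for the converse I would take an abstract étale subbasis $P$, pass to the $\cap$-closure $\mathcal B$ of $P$, and verify directly that the topology it generates satisfies the algebraic étale axioms.

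For the forward direction, assume $G$ is an étale groupoid, with source $\mathbf s(g)=g^{-1}g$, range $\mathbf r(g)=gg^{-1}$ and inversion $\iota(g)=g^{-1}$, and set $\mathcal B=\mathcal{OB}(G)$. I would first invoke the standard facts that $\iota$ is a homeomorphism and $\mathbf s$ and $\mathbf r=\mathbf s\circ\iota$ are local homeomorphisms, hence open maps, so $G^{(0)}=\mathbf s(G)$ is open and, for every open bisection $N$, the sets $\mathbf r(N),\mathbf s(N)$ are open and $\mathbf r|_N,\mathbf s|_N$ are homeomorphisms onto them. Then: intersecting a neighbourhood on which $\mathbf s$ is injective with one on which $\mathbf r$ is injective gives an open bisection around any point, so $\mathcal{OB}(G)$ covers $G$ and, by shrinking any open $W$ to the open bisections $B\cap W$, is a basis; it is closed under inverses since $\iota$ is a homeomorphism; and for $O,N\in\mathcal{OB}(G)$ a short computation with $O^{-1}O,NN^{-1}\subseteq G^{(0)}$ shows $ON\in\mathcal B(G)$, while openness of $ON$ follows from the translation homeomorphism
\[\Theta_N\colon\mathbf s^{-1}(\mathbf r(N))\to\mathbf s^{-1}(\mathbf s(N)),\qquad\Theta_N(g)=g\cdot(\mathbf r|_N)^{-1}(\mathbf s(g)),\]
which is a homeomorphism (with inverse $\Theta_{N^{-1}}$) by continuity of multiplication, together with the identity $ON=\Theta_N\bigl(O\cap\mathbf s^{-1}(\mathbf r(N))\bigr)$ exhibiting $ON$ as the $\Theta_N$-image of an open set. (This is essentially the well-known fact that the open bisections of an étale groupoid form an inverse semigroup.) Hence $\mathcal{OB}(G)$ is an étale basis, a fortiori an étale subbasis.

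For the converse, assume $P$ is an étale subbasis. The organisationally crucial step is to replace $P$ by the collection $\mathcal B$ of all finite intersections of members of $P$. Since a finite intersection of bisections is a bisection, $\mathcal B\subseteq\mathcal{OB}(G)$, and $\mathcal B$ is a basis since $P$ generates the topology. I claim $\mathcal B$ is still an étale family: covering and closure under inversion are clear, and closure under pointwise products follows from the identity
\[\Bigl(\bigcap_iB_i\Bigr)\Bigl(\bigcap_jB'_j\Bigr)=\bigcap_{i,j}B_iB'_j\qquad(B_i,B'_j\in P),\]
whose nontrivial inclusion is a cancellation argument: for $x$ lying in every $B_iB'_j$, the left factor of $x$ in $B_iB'_j$ is the unique element of $B_i$ with range $\mathbf r(x)$, hence independent of $i$, the right factor is likewise independent of $j$, and then left/right cancellation produces a single $a\in\bigcap_iB_i$, $a'\in\bigcap_jB'_j$ with $x=aa'$. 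So I may assume $P$ is an étale basis closed under finite intersections, and then the algebraic étale axioms follow in turn. Since $G=\bigcup P$ and $BB^{-1}\in P$ with $BB^{-1}\subseteq G^{(0)}$ for each $B\in P$, we get $G^{(0)}=\bigcup_{B\in P}BB^{-1}\in\mathcal O(G)$; for each $B\in P$, $\mathbf s(B)=B^{-1}B\in P$ and $\mathbf r(B)=BB^{-1}\in P$, so $\mathbf s,\mathbf r$ are open maps; the members of $P$ contained in $G^{(0)}$ form a basis of $G^{(0)}$, and for such $B$ one has $B=B^{-1}B$ and $\mathbf s^{-1}(B)=\bigcup_{C\in P}CB\in\mathcal O(G)$ (each $CB\in P$), so $\mathbf s$, and $\mathbf r=\mathbf s\circ\iota$, is continuous; restricting, $\mathbf s|_B\colon B\to B^{-1}B$ is a continuous bijection onto an open set whose inverse is continuous (its preimage of $C\in P$ is $\mathbf s(B\cap C)=(B\cap C)^{-1}(B\cap C)\in P$), so $\mathbf s$ is a local homeomorphism; continuity of $\iota$ is immediate from $\iota^{-1}(B)=B^{-1}\in P$; and for multiplication, given $g_0h_0\in B\in P$ with $g_0\in O\in P$, $h_0\in N\in P$, the open bisections $O'=O\cap BN^{-1}$ and $N'=N\cap(O')^{-1}B$ contain $g_0,h_0$ and satisfy $O'N'\subseteq B$ (another cancellation, using that $N$ is a bisection), so $(O'\times N')\cap G^{(2)}$ is a neighbourhood of $(g_0,h_0)$ inside the preimage of $B$. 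Hence $G$ is étale.

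The step I expect to be the main obstacle is the closure of the relevant families under pointwise products. In the forward direction this is the openness of $ON$, for which the translation homeomorphism $\Theta_N$ is the essential device; in the converse it is the identity $(\bigcap_iB_i)(\bigcap_jB'_j)=\bigcap_{i,j}B_iB'_j$ — needed so that the $\cap$-closure $\mathcal B$ remains an étale family and one may freely use $\mathbf s(B)=B^{-1}B\in P$, $\mathbf r(B)=BB^{-1}\in P$ — together with exhibiting the explicit neighbourhood $(O'\times N')\cap G^{(2)}$ witnessing continuity of multiplication. Everything else reduces to bookkeeping with these identities and the openness of $G^{(0)}$.
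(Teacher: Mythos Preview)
Your proof is correct and follows essentially the same route as the paper. Both arguments reduce the subbasis case to the basis case via the identity
\[
\Bigl(\bigcap_i B_i\Bigr)\Bigl(\bigcap_j B'_j\Bigr)=\bigcap_{i,j}B_iB'_j
\]
for bisections, so that the $\cap$-closure of an \'etale subbasis is still an \'etale family and hence an \'etale basis. The paper gives a short set-theoretic proof of this identity (showing $B\bigcap\mathcal A=\bigcap B\mathcal A$ via $\bigcap B\mathcal A\subseteq BB^{-1}\bigcap B\mathcal A\subseteq B\bigcap B^{-1}B\mathcal A\subseteq B\bigcap\mathcal A$), whereas you argue by element-chasing with cancellation; both are fine.

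The only substantive difference is one of packaging: the paper cites \cite[Proposition~6.6]{BiceStarling2018} for the equivalence ``\'etale $\Leftrightarrow$ has an \'etale basis'' and then only has to supply the intersection identity, while you reprove that equivalence from scratch (your forward direction via the translation homeomorphism $\Theta_N$, and your explicit verification that source, range, inversion and multiplication are continuous/open from an \'etale basis). So your argument is self-contained where the paper is not, at the cost of being longer. One tiny slip: in your continuity-of-multiplication step you say ``using that $N$ is a bisection'', but the cancellation you actually need is that $O'$ is a bisection (to conclude $g=g'$ from $\mathbf s(g)=\mathbf s(g')$); the argument still goes through.
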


\begin{proof}
By \cite[Proposition 6.6]{BiceStarling2018}, $G$ is \'etale if and only if $G$ has an \'etale basis.  To show that an \'etale subbasis suffices, one only has to note that the pointwise product preserves intersections of bisections, i.e. for any $\mathcal{A},\mathcal{B}\subseteq\mathcal{B}(G)$.
\begin{equation}\label{IntersectionPreserving}
(\bigcap\mathcal{A})(\bigcap\mathcal{B})=\bigcap\mathcal{A}\mathcal{B}(=\bigcap_{\substack{A\in\mathcal{A}\\ B\in\mathcal{B}}}AB).
\end{equation}
This follows from the fact that, for any $B\in\mathcal{B}(G)$ and $\mathcal{A}\subseteq\mathcal{P}(G)$,
\[B\bigcap\mathcal{A}=\bigcap B\mathcal{A}(=\bigcap_{A\in\mathcal{A}}BA).\]
Indeed, $B\bigcap\mathcal{A}\subseteq\bigcap B\mathcal{A}$ is immediate, while $B^{-1}B\subseteq G^{(0)}$ yields
\[\bigcap B\mathcal{A}\subseteq BB^{-1}\bigcap B\mathcal{A}\subseteq B\bigcap B^{-1}B\mathcal{A}\subseteq B\bigcap\mathcal{A}.\qedhere\]
\end{proof}

In Hausdorff groupoids even \'etale pseudosubbases suffice, as we now show.

\begin{prp}\label{CoherentEtale}
If $G$ is a coherent \'etale groupoid then $G^\mathrm{patch}$ is also \'etale.
\end{prp}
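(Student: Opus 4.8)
The plan is to invoke \autoref{EtaleSubbasis} and exhibit an \'etale subbasis of $G^{\mathrm{patch}}$, built from the open bisections of $G$ by deleting compact saturated sets. Since $G$ is \'etale, $\mathcal{OB}(G)$ is an \'etale basis of $G$ (it is a basis of the locally compact $T_0$ space $G$ and an \'etale family, and it is closed under finite intersections), so I would set
\[
\mathcal{S}=\{B\setminus C:B\in\mathcal{OB}(G)\text{ and }C\in\mathcal{C}(G)\}.
\]
Each member of $\mathcal{S}$ is open in $G^{\mathrm{patch}}$, being $B\cap C^c$, and is a bisection, being a subset of $B$; and $\mathcal{OB}(G)\subseteq\mathcal{S}$ (take $C=\emptyset$), so $\mathcal{S}$ covers $G$. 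Closure under inverses is immediate, since $(B\setminus C)^{-1}=B^{-1}\setminus C^{-1}$ with $B^{-1}\in\mathcal{OB}(G)$ and $C^{-1}\in\mathcal{C}(G)$ (inversion is a self-homeomorphism of $G$, hence preserves compactness and saturation). To see that $\mathcal{S}$ generates the topology of $G^{\mathrm{patch}}$, note $\mathcal{C}(G)$ is closed under finite unions, so a basic $G^{\mathrm{patch}}$-open set has the form $U\cap C^c$ with $U\in\mathcal{O}(G)$ and $C\in\mathcal{C}(G)$, and writing $U$ as a union of members of $\mathcal{OB}(G)$ exhibits $U\cap C^c$ as a union of members of $\mathcal{S}$.

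It then remains to check that $\mathcal{S}$ is closed under pointwise products and that $\mathcal{S}$ is a pseudosubbasis of $G^{\mathrm{patch}}$, i.e.\ satisfies \eqref{Separating'} and \eqref{capPointRound}. For products I would use that elements of a product of two bisections factor uniquely: if $B_1,B_2$ are bisections and $g=ab=a'b'$ with $a,a'\in B_1$ and $b,b'\in B_2$ then $a=a'$ and $b=b'$ (because $g\mapsto gg^{-1}$ is injective on $B_1$ and $g\mapsto g^{-1}g$ on $B_2$), and that products of bisections distribute over intersections in either variable. This yields $(B_1\setminus C_1)B_2=B_1B_2\setminus C_1B_2$ and $B_1(B_2\setminus C_2)=B_1B_2\setminus B_1C_2$, and hence
\[
(B_1\setminus C_1)(B_2\setminus C_2)=B_1B_2\setminus(C_1B_2\cup B_1C_2),
\]
with $B_1B_2\in\mathcal{OB}(G)$. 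The work is to realise the right-hand side as a member of $\mathcal{S}$, and here I would use coherence together with the fact that, for $B\in\mathcal{OB}(G)$, right translation by $B$ restricts to a homeomorphism from the open set $\{g:g^{-1}g\in BB^{-1}\}$ onto $\{g:gg^{-1}\in B^{-1}B\}$ (and symmetrically for left translation) to analyse $C_1B_2$ and $B_1C_2$ inside $B_1B_2$.

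For \eqref{Separating'}, separation is inherited from $G$ since $\mathcal{OB}(G)\subseteq\mathcal{S}$. For \eqref{capPointRound}, given finite $F\subseteq\mathcal{S}$ and $x\in\bigcap F$, I would write $\bigcap F=B\setminus C$ with $B\in\mathcal{OB}(G)$ the (finite) intersection of the $\mathcal{OB}(G)$-parts and $C\in\mathcal{C}(G)$ the union of the deleted parts, then use local compactness and coherence of $G$ to produce a compact saturated neighbourhood $K\subseteq B$ of $x$ for which $K\setminus C$ is compact in $G^{\mathrm{patch}}$; then $(\mathrm{int}_G K)\setminus C\in\mathcal{S}$ and $x\in(\mathrm{int}_G K)\setminus C\subseteq K\setminus C\subseteq\bigcap F$ witnesses $x\in(\mathrm{int}_G K)\setminus C\Subset\bigcap F$ in $G^{\mathrm{patch}}$, the comparison of $\Subset$ in $G$ and in $G^{\mathrm{patch}}$ going exactly as in the proof of \eqref{SubsetPatch}.

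I expect the main obstacle to be the identification of $B_1B_2\setminus(C_1B_2\cup B_1C_2)$ as a member of $\mathcal{S}$: the naive candidate $C=(C_1B_2\cup B_1C_2)\cap B_1B_2$ need not be compact, because $C_1\cap B_1$ need not be compact even though $C_1$ is. The resolution should be to shrink the open bisection used in the representation below $B_1B_2$, absorbing the non-compact portion, and it is precisely here that coherence — closure of $\mathcal{C}(G)$ under $\cap$, equivalently stability of $\Subset$ under $\cap$ — is used; the same point underlies the compactness of $K\setminus C$ in $G^{\mathrm{patch}}$ needed for \eqref{capPointRound}.
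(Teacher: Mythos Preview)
Your overall strategy---build an \'etale (sub)basis for $G^{\mathrm{patch}}$ out of sets $B\setminus C$ with $B$ an open bisection and $C\in\mathcal{C}(G)$, then invoke \autoref{EtaleSubbasis}---is exactly the paper's strategy, and your product formula $(B_1\setminus C_1)(B_2\setminus C_2)=B_1B_2\setminus(C_1B_2\cup B_1C_2)$ is correct. You also correctly locate the obstacle: $C_1B_2\cup B_1C_2$ has no reason to lie in $\mathcal{C}(G)$, since an open bisection times a compact saturated set need not be compact. But your proposed resolution (``shrink the open bisection below $B_1B_2$, absorbing the non-compact portion'') is not a proof; it is not clear what the shrunken bisection is or why the remainder becomes compact saturated, and your family $\mathcal{S}$ as defined is in fact not closed under products in general.

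The paper closes this gap with two moves you are missing. First, it proves the key lemma that $\mathcal{C}(G)$ is closed under pointwise products: using coherence to intersect down to compact saturated pieces lying inside single open bisections, one reduces to the case $C\subseteq O$, $D\subseteq N$ with $O,N\in\mathcal{OB}(G)$, and there $CD$ is the image of $C^{-1}C\cap DD^{-1}\in\mathcal{C}(G)$ under the homeomorphism $e\mapsto s|_O^{-1}(e)\,r|_N^{-1}(e)$. Second, it does not use all of $\mathcal{OB}(G)$ but only those $O$ that are $\Subset$-below some open bisection, hence contained in a \emph{compact} bisection $B_O$. With this restriction, inside $ON$ one has $OD\cup CN=(B_OD\cup CB_N)\cap ON$ (uniqueness of factorisations through a bisection), and now $B_OD,CB_N\in\mathcal{C}(G)$ by the key lemma, so the family is product-closed. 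Your vague ``translation homeomorphism'' remark is pointing in the right direction for the key lemma, but the reduction step via coherence is essential and absent.

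A minor point: once you have shown that $\mathcal{S}$ generates the patch topology, it is already a subbasis, so \autoref{EtaleSubbasis} applies directly; the separate verification of \eqref{Separating'} and \eqref{capPointRound} is unnecessary (and in any case you could not appeal to \autoref{EtalePseudosubbasis}, since that corollary is derived from the very proposition you are proving).
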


\begin{proof}
First note saturated subsets are closed under taking products (even in an arbitrary \'etale groupoid).  Indeed, as $\mathcal{OB}(G)$ covers $G$, for any $Q,R\subseteq\mathcal{O}(G)$,
\begin{align*}
\big(\bigcap Q\big)\big(\bigcap R\big)&=\big(\bigcup_{A\in\mathcal{OB}(G)}A\cap\bigcap Q\big)\big(\bigcup_{B\in\mathcal{OB}(G)}B\cap\bigcap R\big)\\
&=\big(\bigcup_{A\in\mathcal{OB}(G)}\bigcap_{C\in Q}A\cap C\big)\big(\bigcup_{B\in\mathcal{OB}(G)}\bigcap_{D\in R}B\cap D\big)\\
&=\bigcup_{A,B\in\mathcal{OB}(G)}\big(\bigcap_{C\in Q}A\cap C\big)\big(\bigcap_{D\in R}B\cap D\big)\\
&=\bigcup_{A,B\in\mathcal{OB}(G)}\bigcap_{\substack{C\in Q\\ D\in R}}(A\cap C)(B\cap D),
\end{align*}
by \eqref{IntersectionPreserving}.  As $G$ is \'etale, each $(A\cap C)(B\cap D)$ is open and hence each intersection $\bigcap_{C\in Q,D\in R}(A\cap C)(B\cap D)$ is saturated.  As open sets are closed under arbitrary unions, so are saturated subsets and hence the product is indeed saturated.

Next we claim that products of compact saturated subsets are again compact.  To see this, take $C,D\in\mathcal{C}(G)$.  For each $c\in C$, we have $O,N\in\mathcal{OB}(G)$ with $c\in O\Subset N$, as $\mathcal{OB}(G)$ covers $G$ and $G$ is locally compact.  As $C$ is compact, we can cover it with finitely many such $O$.  This yields a finite $F\subseteq\mathcal{C}(G)\cap\mathcal{OB}(G)^\Supset$ with $C\subseteq\bigcup F$, i.e. each $C'\in F$ is compact, saturated and contained in some open bisection.  As $G$ is coherent, $C'\cap C\in\mathcal{C}(G)$, for all $C'\in F$, and $C=\bigcup_{C'\in F}C'\cap C$.  The same applies to $D$ and thus $CD$ can be expressed as a finite union of products of compact saturated subsets, each of which is contained in some open bisection.  As $\mathcal{C}(G)$ is closed under finite unions, it suffices to consider the case that $C$ and $D$ are already contained in open bisections $O$ and $N$ respectively.

Now consider the maps $s(g)=g^{-1}g$ and $r(g)=gg^{-1}$ restricted to $O$ and $N$ respectively, which are homeomorphisms.  In particular, as they are continuous, $C^{-1}C$ and $DD^{-1}$ are compact and also saturated, as mentioned above.  As $G$ is coherent, the same applies to their intersection, i.e. $E=C^{-1}C\cap DD^{-1}\in\mathcal{C}(G)$.  Now $p(g)=s^{-1}(g)r^{-1}(g)$ restricted to $O^{-1}O\cap NN^{-1}$ is also a homeomorphism and hence $CD=p[E]$ is also compact and saturated, i.e. $CD\in\mathcal{C}(G)$, as required.

Lastly, note that $G^\mathrm{patch}$ has a basis of elements of the form $O\setminus C$ where $O\in\mathcal{O}(G)\cap\mathcal{OB}(G)^\Supset$ and $C\in\mathcal{C}(G)$, so each $O$ is open and contained in some compact bisection $B_O$.  But then, for any such $O\setminus C$ and $N\setminus D$, 
\[(O\setminus C)(N\setminus D)=ON\setminus(OD\cup CN)=ON\setminus(B_OD\cup CB_N).\]
As $\mathcal{C}(G)$ is closed under products, this shows that our basis is closed under products too, and it is immediately seen to be closed under inverses as well.
\end{proof}

\begin{cor}\label{EtalePseudosubbasis}
If $G$ is Hausdorff with an \'etale pseudosubbasis $P$ then $G$ is \'etale.
\end{cor}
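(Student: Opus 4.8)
The plan is to route everything through the patch construction, assembling the results already proved. First I would note that $G$ carries a pseudosubbasis $P$, so by the remark following the definition of pseudosubbasis $G$ is automatically locally compact and $T_0$; combined with the standing hypothesis that $G$ is Hausdorff, this makes $G$ a locally compact Hausdorff space. Hence \autoref{PseudoPatch} applies to $P\subseteq\mathcal{O}(G)$: writing $G_P$ for the set $G$ equipped with the topology generated by $P$, the space $G_P$ is stably locally compact and $T_0$, and $G=G_P^{\mathrm{patch}}$.

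Next I would check that $G_P$ is an \'etale groupoid. The groupoid operations live on the underlying set of $G$ and are purely algebraic, so all of the conditions making $P$ an \'etale family for $G$ — that each $O\in P$ is a bisection, that $\bigcup P=G$, and that $P$ is closed under pointwise products and inverses — are unchanged when we replace the topology of $G$ by the coarser one of $G_P$. Since by construction $P$ generates the topology of $G_P$, this exhibits $P$ as an \'etale subbasis of $G_P$, and \autoref{EtaleSubbasis} then gives that $G_P$ is \'etale. Being stably locally compact, $G_P$ is in particular coherent.

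Finally, \autoref{CoherentEtale} applied to the coherent \'etale groupoid $G_P$ shows that $G_P^{\mathrm{patch}}$ is \'etale; but $G_P^{\mathrm{patch}}=G$, so $G$ is \'etale, as required.

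I do not anticipate a genuine obstacle here: the corollary is a bookkeeping assembly of \autoref{PseudoPatch}, \autoref{EtaleSubbasis} and \autoref{CoherentEtale}. The one point that deserves a moment's care is the verification in the second paragraph that no property of $P$ invoked when applying \autoref{EtaleSubbasis} depends on which topology we put on the underlying set of $G$ — since being a bisection, covering, and closure under products and inverses are all set-theoretic or algebraic conditions, this is immediate, but it is what legitimises treating $P$ as an \'etale subbasis of $G_P$ rather than merely a pseudosubbasis.
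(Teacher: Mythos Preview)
Your proposal is correct and follows essentially the same route as the paper: apply \autoref{EtaleSubbasis} to see that $G_P$ is \'etale, then use \autoref{PseudoPatch} (so $G_P$ is stably locally compact, hence coherent, and $G=G_P^{\mathrm{patch}}$) together with \autoref{CoherentEtale} to conclude. Your additional remarks justifying why $P$ is a genuine \'etale subbasis of $G_P$ and why \autoref{PseudoPatch} applies are exactly the details the paper leaves implicit.
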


\begin{proof}
By \autoref{EtaleSubbasis}, $G_P(=$ $G$ with the topology generated by $P)$ is \'etale.  By \autoref{PseudoPatch} and \autoref{CoherentEtale}, $G=G_P^\mathrm{patch}$ is \'etale.
\end{proof}

Later we will need the following elementary result on \'etale families.

\begin{prp}
If $\mathcal{B}\subseteq\mathcal{B}(G)$ is \'etale, $T_g=\{O\in\mathcal{B}:g\in O\}$ and $\leq\ =\ \subseteq$ on $\mathcal{B}$,
\begin{equation}\label{Functor}
T_{g^{-1}}=T_g^{-1}\qquad\text{and}\qquad T_{gh}=(T_gT_h)^\leq,\quad\text{for all }(g,h)\in G^{(2)}.
\end{equation}
\end{prp}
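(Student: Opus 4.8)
The plan is to prove the two identities separately, each by a direct computation, using only that an \'etale family covers $G$ and is closed under pointwise products and inverses. The one elementary fact I will lean on repeatedly is that a bisection absorbs units on either side: if $B\in\mathcal{B}(G)$ and $F\subseteq G^{(0)}$, then $BF\subseteq B$ and $FB\subseteq B$, since $ae=a$ whenever $a\in B$, $e\in G^{(0)}$ and $a^{-1}a=e$ (and dually $eb=b$ when $bb^{-1}=e$). For the inverse identity, I would simply note that for $O\in\mathcal{B}$ we have $g\in O$ if and only if $g^{-1}\in O^{-1}$, and $O^{-1}\in\mathcal{B}$ because $\mathcal{B}$ is closed under inverses; hence $O\in T_g$ iff $O^{-1}\in T_{g^{-1}}$, which is exactly the assertion $T_g^{-1}=T_{g^{-1}}$.

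For the product identity, fix $(g,h)\in G^{(2)}$, so that $gh$ is defined. The inclusion $(T_gT_h)^\leq\subseteq T_{gh}$ is the easy half. Given $O\in T_g$ and $N\in T_h$, from $g\in O$, $h\in N$ and $(g,h)\in G^{(2)}$ we get $gh\in ON$, while $ON\in\mathcal{B}$ by closure under pointwise products; thus $ON\in T_{gh}$. Since $T_{gh}$ is visibly an up-set in $(\mathcal{B},\subseteq)$ (any member of $\mathcal{B}$ containing some element of $T_{gh}$ contains $gh$), the whole upward closure $(T_gT_h)^\leq$ is contained in $T_{gh}$.

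The reverse inclusion $T_{gh}\subseteq(T_gT_h)^\leq$ is where the covering hypothesis is used. Given $M\in T_{gh}$, I would invoke $G=\bigcup\mathcal{B}$ to choose some $N_0\in\mathcal{B}$ with $h\in N_0$, and then set $O=MN_0^{-1}$, which lies in $\mathcal{B}$ by closure under inverses and products. A short computation gives $(gh,h^{-1})\in G^{(2)}$ — indeed $(gh)^{-1}(gh)=h^{-1}g^{-1}gh=h^{-1}(hh^{-1})h=h^{-1}h=h^{-1}(h^{-1})^{-1}$ — so $g=(gh)h^{-1}\in MN_0^{-1}=O$, whence $O\in T_g$ and $N_0\in T_h$. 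Finally $ON_0=MN_0^{-1}N_0\subseteq M$, since $N_0^{-1}N_0\subseteq G^{(0)}$ and $M$ absorbs units on the right; therefore $M\in(T_gT_h)^\leq$, finishing the proof.

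The only place where care is genuinely needed is the last paragraph: one must check the composability relations $(gh,h^{-1})\in G^{(2)}$ (and similar source/range identities) and verify $MN_0^{-1}N_0\subseteq M$ at the level of pointwise groupoid products, i.e.\ confirm that every element of $MN_0^{-1}N_0$ collapses back into $M$ once the source--range conditions on composable pairs are imposed. This is routine bisection bookkeeping rather than a real obstacle, but it is the step most likely to hide a slip.
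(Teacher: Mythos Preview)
Your proof is correct and follows essentially the same approach as the paper's. The only cosmetic difference is a mirror-image choice: for the nontrivial inclusion $T_{gh}\subseteq(T_gT_h)^\leq$, the paper picks a bisection $O\in T_g$ (on the left) and runs the chain $T_{gh}\subseteq(OO^{-1}T_{gh})^\leq\subseteq(T_gT_{g^{-1}}T_{gh})^\leq\subseteq(T_gT_h)^\leq$, whereas you pick $N_0\in T_h$ (on the right) and explicitly form $O=MN_0^{-1}$; both arguments rest on the covering property of $\mathcal{B}$ and the unit-absorption $B\,G^{(0)}\subseteq B$.
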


\begin{proof}
We immediately see that $T_g^{-1}=T_{g^{-1}}$ and $T_gT_h\subseteq T_{gh}$ and hence $(T_gT_h)^\leq\subseteq T_{gh}$, as $gh\in O\subseteq N$ implies $gh\in N$.  Conversely, take any $O\in T_g$ and note that, as $O$ is a bisection, $OO^{-1}N\subseteq N$, for any $N\in T_{gh}$, so
\[T_{gh}\subseteq(OO^{-1}T_{gh})^\leq\subseteq(T_gT_{g^{-1}}T_{gh})^\leq\subseteq(T_gT_{g^{-1}gh})^\leq\subseteq(T_gT_h)^\leq.\qedhere\]
\end{proof}

\subsection{Pseudobasic Inverse Semigroups}

Now we can finally show that, for suitable inverse semigroups, our tight groupoid is necessarily \'etale.

\begin{dfn}
We call $(S,\prec)$ a \emph{pseudobasic inverse semigroup} if $S$ is an ordered inverse semigroup with zero and $P=S\setminus\{0\}$ is an abstract pseudobasis.
\end{dfn}

\begin{thm}\label{LCHEgroupoid}
If $S$ is a pseudobasic inverse semigroup then $\mathcal{T}(P)$ is a locally compact Hausdorff \'etale groupoid with an \'etale pseudobasis $(O_s)_{s\prec S}$.
\end{thm}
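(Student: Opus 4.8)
The plan is to combine results already in hand and then verify the one genuinely new ingredient, namely that $s\mapsto O_s$ is multiplicative. By \autoref{TightGroupoid}, $\mathcal{T}(P)$ is a subgroupoid of the coset groupoid of $S$ --- here one uses \autoref{Tight=>Frink} and \autoref{Frink=>Coset} to see that every tight subset is a round Frink filter, hence a coset. By \autoref{TightHausdorff} and \autoref{TightLocallyCompact}, $\mathcal{T}(P)$ is locally compact Hausdorff. So it remains only to show that $(O_s)_{s\prec S}$ is an \'etale pseudobasis; \'etaleness of $\mathcal{T}(P)$ will then follow from \autoref{EtalePseudosubbasis}, since $\mathcal{T}(P)$ is Hausdorff.

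By \autoref{abstract->concrete}, $(O_p)_{p\in P}$ is a concrete pseudobasis of $\mathcal{T}(P)$, and passing to the subfamily indexed by $\{s:s\prec S\}$ costs nothing: for every $T\in\mathcal{T}(P)$ and $p\in T$, roundness of $T$ yields $q\in T$ with $q\prec p$, so $q\prec S$ and $T\in O_q\subseteq O_p$ (and $O_0=\emptyset$, so discarding $0$ and the $\prec$-maximal elements loses no content). Together with \eqref{FprecQ}, \autoref{O_FSubset} and upward closure of tight subsets, this downward-replacement re-derives \eqref{Cover}, \eqref{Separating}, \eqref{PointRound} and \eqref{Dense} for $(O_s)_{s\prec S}$. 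For the \'etale-family axioms, coverage is exactly the above, and closure under inverses is immediate: $O_s^{-1}=O_{s^{-1}}$ because $\mathcal{T}(P)$ is closed under inverses and $s\in T\Leftrightarrow s^{-1}\in T^{-1}$, while $s\prec S$ forces $s^{-1}\prec S$ by \eqref{Inverses} and $s,t\prec S$ forces $st\prec S$ by \eqref{Multiplicative}.

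It thus remains to prove the product formula $O_sO_t=O_{st}$ for $s,t\prec S$. The inclusion $O_sO_t\subseteq O_{st}$ is trivial, as $st\in TU\subseteq(TU)^\leq$ whenever $T\in O_s$ and $U\in O_t$ are composable. For the converse, given $V\in\mathcal{T}(P)$ with $st\in V$, I would produce the composable pair $T=(Vt^{-1})^\leq$, $U=(s^{-1}V)^\leq$: here $s\in T$ and $t\in U$ since $(st)t^{-1}\leq s$ lies in $Vt^{-1}$ and $s^{-1}(st)\leq t$ lies in $s^{-1}V$; tightness of $T$ and $U$ follows from \eqref{TsTight} (applied to $V,t^{-1}$ and to $V^{-1},s$, with $st\in V$ supplying the hypotheses $t^{-1}t\in(V^{-1}V)^\leq$ and $ss^{-1}\in(VV^{-1})^\leq$) together with closure of $\mathcal{T}(P)$ under inverses; and $(TU)^\leq=V$ follows from coset arithmetic, since $VV^{-1}V\subseteq V$ and $(st)^{-1}\in V^{-1}$ give $TU\subseteq(V(st)^{-1}V)^\leq\subseteq V$, while $v\geq v(st)^{-1}(st)\in TU$ for each $v\in V$ gives $V\subseteq(TU)^\leq$. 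Granting the product formula, each $O_s$ is an open bisection, because $O_s^{-1}O_s=O_{s^{-1}s}$ and $O_sO_s^{-1}=O_{ss^{-1}}$ consist of cosets containing an idempotent, hence of units of $\mathcal{T}(P)$ by \autoref{IdempotentCosets}. Thus $(O_s)_{s\prec S}$ is an \'etale pseudobasis and \autoref{EtalePseudosubbasis} completes the proof.

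I expect the delicate point to be verifying that the pair $(T,U)=((Vt^{-1})^\leq,(s^{-1}V)^\leq)$ is genuinely composable, i.e. $(T^{-1}T)^\leq=(UU^{-1})^\leq$: both sides reduce to $(tV^{-1}Vt^{-1})^\leq$ and $(s^{-1}VV^{-1}s)^\leq$ respectively, and identifying these as one and the same idempotent coset is a bookkeeping argument that leans on $st\in V$ and the commutation of idempotents. Everything else is a direct assembly of \autoref{TightGroupoid}, \autoref{TightHausdorff}, \autoref{TightLocallyCompact}, \autoref{abstract->concrete} and \autoref{EtalePseudosubbasis}.
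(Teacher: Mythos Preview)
Your proof is correct and complete in outline, but it takes a genuinely different route from the paper at the key step.  You attack $O_sO_t=O_{st}$ head-on: given $V\ni st$, you manufacture the pair $T=(Vt^{-1})^\leq$, $U=(s^{-1}V)^\leq$ and verify $(TU)^\leq=V$.  The paper instead first proves only the special case $O_sO_{s^{-1}}=O_{ss^{-1}}$ \textemdash\ where the factorisation $T=(UU^{-1})^\leq$ with $U=(Ts)^\leq$ involves the pair $(U,U^{-1})$, which is \emph{automatically} composable \textemdash\ and then, knowing each $O_s$ is a bisection, derives the general inclusion purely algebraically:
\[O_{st}=O_{st}O_{(st)^{-1}}O_{st}\subseteq O_{ss^{-1}}O_{st}=O_sO_{s^{-1}}O_{st}\subseteq O_sO_{s^{-1}st}\subseteq O_sO_t.\]

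The cost of your direct approach is precisely the ``delicate point'' you flag: one must check $(T^{-1}T)^\leq=(UU^{-1})^\leq$, i.e.\ $(tV^{-1}Vt^{-1})^\leq=(s^{-1}VV^{-1}s)^\leq$.  This is not quite idle bookkeeping, though it does go through: for $v_1,v_2\in V$ one has $t\geq s^{-1}(st)$ and $t^{-1}\geq(st)^{-1}s$, whence
\[tv_1^{-1}v_2t^{-1}\ \geq\ s^{-1}\,(st)v_1^{-1}\cdot v_2(st)^{-1}\,s\ \in\ s^{-1}\,VV^{-1}VV^{-1}\,s\ =\ s^{-1}\,VV^{-1}\,s,\]
the last equality using only $VV^{-1}V=V$; the reverse inclusion is symmetric.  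So your sketch can be completed, but the paper's detour through the $s,s^{-1}$ case buys exactly the avoidance of this composability check, at the price of one extra line of bisection algebra.  Everything else in your proposal \textemdash\ the assembly of \autoref{TightGroupoid}, \autoref{TightHausdorff}, \autoref{TightLocallyCompact}, \autoref{abstract->concrete}, \autoref{EtalePseudosubbasis}, and the reduction from $(O_p)_{p\in P}$ to $(O_s)_{s\prec S}$ \textemdash\ matches the paper.
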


\begin{proof}
By \autoref{TightHausdorff}, \autoref{TightLocallyCompact} and \autoref{TightGroupoid}, $\mathcal{T}(P)$ is locally compact Hausdorff groupoid.  By \autoref{abstract->concrete}, $(O_p)_{p\in P}$ and hence $(O_s)_{s\prec S}$ is a pseudo(sub)basis for $\mathcal{T}(P)$.  In particular, $(O_s)_{s\prec S}$ covers $\mathcal{T}(P)$ so, by \autoref{EtalePseudosubbasis}, it suffices to verify that $(O_s)_{s\prec S}$ is an \'etale family.

First note that, for any $s\prec S$,
\begin{equation}\label{ss-1}
O_sO_s^{-1}=O_{ss^{-1}}.
\end{equation}
Indeed, $O_sO_s^{-1}\subseteq O_{ss^{-1}}$ is immediate.  Conversely, if $T\in O_{ss^{-1}}$ then $U=(Ts)^\leq\in\mathcal{T}(P)$, by \eqref{TsTight}.  Thus $U\in O_s$, $U^{-1}\in O_{s^{-1}}$ and $(UU^{-1})^\leq=(Tss^{-1}T^{-1})^\leq=(Tss^{-1}ss^{-1})^\leq=T$ so $T\in O_sO_{s^{-1}}$, as required.  As $ss^{-1}\in E$, \autoref{IdempotentCosets} yields $O_{ss^{-1}}\in\mathcal{T}(P)^{(0)}$.  Likewise, $O_s^{-1}O_s=O_{s^{-1}s}\in\mathcal{T}(P)^{(0)}$ so $O_s\in\mathcal{OB}(\mathcal{T}(P))$.

We also immediately have $O_{s}^{-1}=O_{s^{-1}}$ and $O_sO_t\subseteq O_{st}$, for any $s,t\in S$.  Conversely, as long as $s\prec S$, \eqref{ss-1} yields
\[O_{st}=O_{st}O_{t^{-1}s^{-1}}O_{st}\subseteq O_{stt^{-1}s^{-1}}O_{st}\subseteq O_{ss^{-1}}O_{st}=O_sO_{s^{-1}}O_{st}\subseteq O_sO_{s^{-1}st}\subseteq O_sO_t.\]
Thus $(O_s)_{s\prec S}$ is also closed under products, as required.
\end{proof}

Exel's original tight groupoid construction was obtained from the groupoid of germs coming from the canonical partial action of $S$ on the tight spectrum of $E$.  In the Hausdorff case, this agrees with our tight groupoid when we consider $S$ as a pseudobasic inverse semigroup with $\prec\ =\ \leq$ (see \autoref{InterpolationRemark} and \autoref{CanonicalOrderedInverseSemigroup}).  Indeed, it follows from the Hausdorff characterization in \cite[Theorem 3.16]{ExelPardo2016} that in this case every one of our tight subsets will be a true filter, not just a Frink filter, and these can be identified with germs just like in Lenz's alternative construction of Paterson's universal groupoid (see \cite{Lenz2008} and \cite[\S3.3]{LawsonMargolisSteinberg2013}).  In the non-Hausdorff case, our tight groupoid will instead be a `Hausdorffification' of Exel's original tight groupoid.  To illustrate this, we give a simple example.

\begin{xpl}
Let $G=\mathbb{N}\cup\{\infty,\infty'\}$ be the two-point compactification of $\mathbb{N}$, so the neighborhoods of $\infty$ are of the form $C\cup\{\infty\}$, for some cofinite $C\subseteq\mathbb{N}$, and likewise for the neighborhoods of $\infty'$.  We make $G$ an ample \'etale groupoid by letting the unit space be $G^0=\mathbb{N}\cup\{\infty\}$ and defining $\infty'\cdot\infty'=\infty$.  Let
\[S=\{\{n\}:n\in\mathbb{N}\}\cup\{e\}\cup\{s\}\cup\{\emptyset\},\]
where $e=\mathbb{N}\cup\{\infty\}$ and $s=\mathbb{N}\cup\{\infty'\}$.  So $S$ is a pseudobasis of compact open bisections and an inverse semigroup under pointwise operations with all elements idempotent except $s$.  If we applied Exel's original tight groupoid construction to $S$, we would recover $G$.  However, our tight groupoid construction identifies $\infty$ and $\infty'$ because every $T\in\mathcal{T}(S\setminus\{\emptyset\})$ must contain both $s$ and $e$, as $s^\perp=e^\perp=\{\emptyset\}$.
\end{xpl}

\begin{rmk}
If we want a faithful extension of Exel's original contruction, even in the non-Hausdorff case, then this can still be done in a similar way.  Basically, the idea is to modify the definition of (both abstract and concrete) pseudobases and work with `local' versions of tight subsets and the tight spectrum, as we discuss in a follow up paper.  However, the Hausdorff construction presented here may be of some interest in its own right, even in the original $\prec\ =\ \leq$ case.  Indeed, it would be interesting to see if our Hausdorff tight groupoid might be a suitable alternative to some of the non-Hausdorff tight groupoids already considered in the literature.
\end{rmk}

We can also turn our tight groupoid construction into a duality:
\begin{gather*}
\text{separative pseudobasic inverse semigroups}\\
\updownarrow\\
\text{\'etale pseudobases of Hausdorff groupoids}.
\end{gather*}
Indeed, \autoref{LCHEgroupoid} has already provided the $\downarrow$ part (although strictly speaking should also assume \eqref{ac<bc} holds for arbitrary $s\in S$, not just $s\prec S$, so that the entirety of $(O_s)_{s\in S}$ is an \'etale family).  The $\uparrow$ part and its reversibility comes from the following extension of the Hausdorff case of \cite[Theorem 4.8]{Exel2010} (from bases to pseudobases but, more importantly, even for non-ample groupoids).

\begin{prp}\label{GroupoidRecovery}
If $G$ is Hausdorff with \'etale pseudobasis $P\not\ni\emptyset$ then $(P\cup\{\emptyset\},\Subset)$ is a pseudobasic inverse semigroup.  Moreover, $G$ is isomorphic to $\mathcal{T}(P)$ via
\[g\mapsto T_g=\{O\in P:g\in O\}.\]
\end{prp}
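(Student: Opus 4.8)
The plan is to read off everything topological from \autoref{TopologyRecovery} and everything algebraic from \eqref{Functor}, so that the only genuine work is checking that $(P\cup\{\emptyset\},\Subset)$ satisfies the ordered inverse semigroup axioms.

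First I would set $S=P\cup\{\emptyset\}$. The bisections $\mathcal{B}(G)$ of any groupoid form an inverse semigroup under the pointwise product, with zero $\emptyset$ and canonical order equal to $\subseteq$; since $P$ is an \'etale family, $S$ is an inverse subsemigroup of $\mathcal{B}(G)$ with zero $0=\emptyset$, with $P=S\setminus\{0\}$, and with $\leq\ =\ \subseteq$ on $S$. Taking $\prec\ =\ \Subset$, transitivity of $\Subset$ and the inclusions $\Subset\ \subseteq\ \subseteq\ =\ \leq$, hence also \eqref{LeftAuxiliary}, are immediate from the definition of compact containment. For \eqref{Multiplicative} I would argue as in the proof of \autoref{CoherentEtale}: if $O\Subset O'$ and $N\Subset N'$ then $\overline{O}$ and $\overline{N}$ are compact and contained in $O'$ and $N'$; since $G$ is Hausdorff, $G^{(2)}$ is closed in $G\times G$, so $(\overline{O}\times\overline{N})\cap G^{(2)}$ is compact and its image $\overline{O}\,\overline{N}$ under the continuous multiplication map is a compact set with $ON\subseteq\overline{O}\,\overline{N}\subseteq O'N'$, whence $ON\Subset O'N'$ (the cases in which one of the four sets is $\emptyset$ being trivial). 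For \eqref{Inverses}, inversion is a homeomorphism, so $O\Subset N$ yields $\overline{O^{-1}}=(\overline{O})^{-1}$ compact and contained in $N^{-1}$, i.e.\ $O^{-1}\Subset N^{-1}$. Thus $(S,\Subset)$ is an ordered inverse semigroup with zero. I expect this verification of \eqref{Multiplicative} to be the only non-routine point, and it is precisely where Hausdorffness of $G$ enters.

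Next, since $P\not\ni\emptyset$ is a concrete pseudobasis of the Hausdorff space $G$, \autoref{TopologyRecovery} (with $\prec\ =\ \Subset$) gives at once that $P=S\setminus\{0\}$ is an abstract pseudobasis — so that $(S,\Subset)$ is a pseudobasic inverse semigroup — and that $g\mapsto T_g$ is a homeomorphism of $G$ onto $\mathcal{T}(P)$. In particular each $T_g$ is a non-empty tight subset of $P$, hence a Frink filter by \autoref{Tight=>Frink} and hence a coset of $S$ by \autoref{Frink=>Coset}, so that $\mathcal{T}(P)$ carries the coset-groupoid structure of \autoref{TightGroupoid}.

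It then remains to see that $g\mapsto T_g$ respects the groupoid operations, which, being already a homeomorphism, will make it an isomorphism of topological groupoids. Since $P$ is an \'etale family with $\leq\ =\ \subseteq$, \eqref{Functor} gives $T_{g^{-1}}=T_g^{-1}$ and $T_{gh}=(T_gT_h)^\leq$ for all $(g,h)\in G^{(2)}$. Applying \eqref{Functor} to the composable pairs $(g,g^{-1})$ and $(h^{-1},h)$ yields $(T_g^{-1}T_g)^\leq=T_{g^{-1}g}$ and $(T_hT_h^{-1})^\leq=T_{hh^{-1}}$; since $g\mapsto T_g$ is a bijection, these two cosets coincide iff $g^{-1}g=hh^{-1}$, i.e.\ iff $(g,h)\in G^{(2)}$ — which is exactly the condition for $(T_g,T_h)$ to be composable in the coset groupoid, in which case the coset product equals $(T_gT_h)^\leq=T_{gh}$ by the remarks preceding \autoref{TightGroupoid}. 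Combined with $T_{g^{-1}}=T_g^{-1}$, this shows $g\mapsto T_g$ is a groupoid isomorphism, completing the proof.
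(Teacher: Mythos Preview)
Your proof is correct and follows essentially the same route as the paper's: verify the ordered inverse semigroup axioms for $(P\cup\{\emptyset\},\Subset)$, invoke \autoref{TopologyRecovery} for the abstract-pseudobasis structure and the homeomorphism $g\mapsto T_g$, and then use \eqref{Functor} together with injectivity of $g\mapsto T_g$ to match composability and products. Your direct argument for \eqref{Multiplicative} via closedness of $G^{(2)}$ is a clean alternative to the paper's citation of \cite[Proposition 6.4]{BiceStarling2018}; note that continuity of multiplication and inversion are available here because \autoref{EtalePseudosubbasis} already makes $G$ \'etale.
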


\begin{proof}
It is immediately verified that $P\cup\{\emptyset\}$ is inverse semigroup where $\leq\ =\ \subseteq$ on $P$.  As $\Subset$ is a stronger relation than $\subseteq$, \eqref{LeftAuxiliary} and \eqref{Inverses} for $\prec\ =\ \Subset$ immediately follows.  As $G$ is Hausdorff, \eqref{Multiplicative} holds, by \cite[Proposition 6.4]{BiceStarling2018} (or note products of compact subsets are compact as in the proof of \autoref{CoherentEtale}).  By \autoref{TopologyRecovery}, $P$ is an abstract pseudobasis and $g\mapsto T_g$ is a homeomorphism onto $\mathcal{T}(P)$.  By \eqref{Functor}, $g\mapsto T_g$ is also functor.  Also, if $g^{-1}g\neq hh^{-1}$ then, by \eqref{Separating},
\[(T_g^{-1}T_g)^\leq=T_{g^{-1}g}\neq T_{hh^{-1}}=(T_hT_h^{-1})^\leq,\]
i.e. $(g,h)\in G^{(2)}$ iff $(T_g,T_h)\in\mathcal{T}(P)^{(2)}$.  So $G$ is isomorphic to $\mathcal{T}(P)$ via $g\mapsto T_g$.
\end{proof}

\begin{xpl}
Say $G$ is a discrete group acting on some locally compact Hausdorff space $X$.  Given any pseudobasis $P$ for $X$, we can obtain a larger pseudobasis $P'$ by taking its closure under finite intersections and images of elements of $G$ (and if $X$ is second countable then we can even take $P$ to be a countable (pseudo)basis which will remain countable when enlarged in this way, as long as $G$ is also countable).  Then $S=(g,O)_{g\in G,O\in P'}$ will form a \'etale pseudobasis of the transformation groupoid $(G,X)$.  Applying \autoref{GroupoidRecovery} allows us to recover $(G,X)$ just from the inverse semigroup structure of $S$.
\end{xpl}

\newpage

\bibliography{maths}{}

\newcommand{\etalchar}[1]{$^{#1}$}
\begin{thebibliography}{GHK{\etalchar{+}}03}

\bibitem[AM18]{AustinMitra2018}
Kyle Austin and Atish Mitra.
\newblock Groupoid models for the {J}iang-{S}u and {R}azak-{J}acelon algebras:
  An inverse limit approach, 2018.
\newblock \href {http://arxiv.org/abs/1804.00967} {\path{arXiv:1804.00967}}.

\bibitem[BS18a]{BiceStarling2018}
Tristan Bice and Charles Starling.
\newblock General non-commutative locally compact locally {H}ausdorff {S}tone
  duality, 2018.
\newblock \href {http://arxiv.org/abs/1803.00394} {\path{arXiv:1803.00394}}.

\bibitem[BS18b]{BiceStarling2016}
Tristan Bice and Charles Starling.
\newblock Locally compact {S}tone duality.
\newblock {\em J. Log. Anal.}, 10:Paper No. 2, 36, 2018.

\bibitem[DPS16]{DeeleyPutnamStrung2015}
Robin~J. Deeley, Ian~F. Putnam, and Karen~R. Strung.
\newblock Constructing minimal homeomorphisms on point-like spaces and a
  dynamical presentation of the {J}iang--{S}u algebra.
\newblock {\em J. Reine Angew. Math. (Crelles Journal)}, 03 2016.

\bibitem[EP16]{ExelPardo2016}
Ruy Exel and Enrique Pardo.
\newblock The tight groupoid of an inverse semigroup.
\newblock {\em Semigroup Forum}, 92(1):274--303, 2016.
\newblock \href {http://dx.doi.org/10.1007/s00233-015-9758-5}
  {\path{doi:10.1007/s00233-015-9758-5}}.

\bibitem[Exe08]{Exel2008}
Ruy Exel.
\newblock Inverse semigroups and combinatorial {$C^\ast$}-algebras.
\newblock {\em Bull. Braz. Math. Soc. (N.S.)}, 39(2):191--313, 2008.
\newblock \href {http://dx.doi.org/10.1007/s00574-008-0080-7}
  {\path{doi:10.1007/s00574-008-0080-7}}.

\bibitem[Exe10]{Exel2010}
Ruy Exel.
\newblock Reconstructing a totally disconnected groupoid from its ample
  semigroup.
\newblock {\em Proc. Amer. Math. Soc.}, 138(8):2991--3001, 2010.
\newblock \href {http://dx.doi.org/10.1090/S0002-9939-10-10346-3}
  {\path{doi:10.1090/S0002-9939-10-10346-3}}.

\bibitem[Exe12a]{Exel2012}
Ruy Exel.
\newblock Sieves-1.pdf, 2012.
\newblock Personal Communication.

\bibitem[Exe12b]{Exel2012b}
Ruy Exel.
\newblock Sieves-2.pdf, 2012.
\newblock Personal Communication.

\bibitem[Fri54]{Frink1954}
Orrin Frink.
\newblock Ideals in partially ordered sets.
\newblock {\em Amer. Math. Monthly}, 61:223--234, 1954.
\newblock \href {http://dx.doi.org/10.2307/2306387}
  {\path{doi:10.2307/2306387}}.

\bibitem[GHK{\etalchar{+}}03]{GierzHofmannKeimelLawsonMisloveScott2003}
G.~Gierz, K.~H. Hofmann, K.~Keimel, J.~D. Lawson, M.~Mislove, and D.~S. Scott.
\newblock {\em Continuous lattices and domains}, volume~93 of {\em Encyclopedia
  of Mathematics and its Applications}.
\newblock Cambridge University Press, Cambridge, 2003.
\newblock \href {http://dx.doi.org/10.1017/CBO9780511542725}
  {\path{doi:10.1017/CBO9780511542725}}.

\bibitem[GL13]{Goubault2013}
Jean Goubault-Larrecq.
\newblock {\em Non-{H}ausdorff topology and domain theory}, volume~22 of {\em
  New Mathematical Monographs}.
\newblock Cambridge University Press, Cambridge, 2013.
\newblock [On the cover: Selected topics in point-set topology].
\newblock \href {http://dx.doi.org/10.1017/CBO9781139524438}
  {\path{doi:10.1017/CBO9781139524438}}.

\bibitem[KL17]{KudryavtsevaLawson2017}
Ganna Kudryavtseva and Mark~V. Lawson.
\newblock A perspective on non-commutative frame theory.
\newblock {\em Advances in Mathematics}, 311(Supplement C):378 -- 468, 2017.
\newblock \href {http://dx.doi.org/10.1016/j.aim.2017.02.028}
  {\path{doi:10.1016/j.aim.2017.02.028}}.

\bibitem[Kub14]{Kubis2014}
Wies{\l}aw Kubi\'s.
\newblock Compact spaces, lattices, and absoluteness: a survey, 2014.
\newblock \href {http://arxiv.org/abs/arXiv:1402.1589}
  {\path{arXiv:arXiv:1402.1589}}.

\bibitem[Kun11]{Kunen2011}
Kenneth Kunen.
\newblock {\em Set theory}, volume~34 of {\em Studies in Logic (London)}.
\newblock College Publications, London, 2011.

\bibitem[Law98]{Lawson1998}
Mark~V. Lawson.
\newblock {\em Inverse semigroups}.
\newblock World Scientific Publishing Co., Inc., River Edge, NJ, 1998.
\newblock The theory of partial symmetries.
\newblock \href {http://dx.doi.org/10.1142/9789812816689}
  {\path{doi:10.1142/9789812816689}}.

\bibitem[Law12]{Lawson2012}
M.~V. Lawson.
\newblock Non-commutative {S}tone duality: Inverse semigroups, topological
  groupoids and {C}*-algebras.
\newblock {\em International Journal of Algebra and Computation},
  22(06):1250058, 2012.
\newblock \href {http://dx.doi.org/10.1142/S0218196712500580}
  {\path{doi:10.1142/S0218196712500580}}.

\bibitem[Len08]{Lenz2008}
Daniel~H. Lenz.
\newblock On an order-based construction of a topological groupoid from an
  inverse semigroup.
\newblock {\em Proc. Edinb. Math. Soc. (2)}, 51(2):387--406, 2008.
\newblock \href {http://dx.doi.org/10.1017/S0013091506000083}
  {\path{doi:10.1017/S0013091506000083}}.

\bibitem[LL13]{LawsonLenz2013}
Mark~V. Lawson and Daniel~H. Lenz.
\newblock Pseudogroups and their \'{e}tale groupoids.
\newblock {\em Advances in Mathematics}, 244(Supplement C):117 -- 170, 2013.
\newblock \href {http://dx.doi.org/10.1016/j.aim.2013.04.022}
  {\path{doi:10.1016/j.aim.2013.04.022}}.

\bibitem[LMS13]{LawsonMargolisSteinberg2013}
M.~V. Lawson, S.~W. Margolis, and B.~Steinberg.
\newblock The \'etale groupoid of an inverse semigroup as a groupoid of
  filters.
\newblock {\em J. Aust. Math. Soc.}, 94(2):234--256, 2013.
\newblock URL: \url{https://doi.org/10.1017/S144678871200050X}.

\bibitem[PP12]{PicadoPultr2012}
Jorge Picado and Ale{\v{s}} Pultr.
\newblock {\em Frames and locales: Topology without points}.
\newblock Frontiers in Mathematics. Birkh\"auser/Springer Basel AG, Basel,
  2012.
\newblock \href {http://dx.doi.org/10.1007/978-3-0348-0154-6}
  {\path{doi:10.1007/978-3-0348-0154-6}}.

\bibitem[Res07]{Resende2007}
Pedro Resende.
\newblock \'{E}tale groupoids and their quantales.
\newblock {\em Advances in Mathematics}, 208(1):147 -- 209, 2007.
\newblock \href {http://dx.doi.org/10.1016/j.aim.2006.02.004}
  {\path{doi:10.1016/j.aim.2006.02.004}}.

\bibitem[{Shi}52]{Shirota1952}
Taira {Shirota}.
\newblock {A generalization of a theorem of I. Kaplansky.}
\newblock {\em {Osaka Math. J.}}, 4:121--132, 1952.
\newblock URL: \url{http://projecteuclid.org/euclid.ojm/1200687806}.

\bibitem[Sto36]{Stone1936}
M.~H. Stone.
\newblock The theory of representations for {B}oolean algebras.
\newblock {\em Trans. Amer. Math. Soc.}, 40(1):37--111, 1936.
\newblock \href {http://dx.doi.org/10.2307/1989664}
  {\path{doi:10.2307/1989664}}.

\bibitem[Vri62]{DeVries1962}
H.~De Vries.
\newblock Compact spaces and compactifications: An algebraic approach.
\newblock Thesis Amsterdam, 1962.

\bibitem[Wal38]{Wallman1938}
Henry Wallman.
\newblock Lattices and topological spaces.
\newblock {\em Ann. of Math. (2)}, 39(1):112--126, 1938.
\newblock \href {http://dx.doi.org/10.2307/1968717}
  {\path{doi:10.2307/1968717}}.

\end{thebibliography}
\bibliographystyle{alphaurl}

\end{document}